\newtheorem{theorem}{Theorem}[section]
\newtheorem{corollary}[theorem]{Corollary}
\newtheorem{lemma}[theorem]{Lemma}
\newtheorem{proposition}[theorem]{Proposition}
\theoremstyle{definition}
\newtheorem{remark}[theorem]{Remark}
\theoremstyle{definition}
\newtheorem{definition}[theorem]{Definition}
\theoremstyle{definition}
\newtheorem{assumption}[theorem]{Assumption}
\def\dashint{\operatorname%
{\,\,\text{\bf--}\kern-.98em\DOTSI\intop\ilimits@\!\!}}
\def\.5{\frac{1}{2}}
\def\bR{\mathbb{R}}
\def\bH{\mathbb{H}}
\def\cA{\mathcal{A}}
\def\cB{\mathcal{B}}
\def\cC{\mathcal{C}}
\def\cD{\mathcal{D}}
\def\cH{\mathcal{H}}
\def\cM{\mathcal{M}}
\def\cO{\mathcal{O}}
\def\cP{\mathcal{P}}
\def\cQ{\mathcal{Q}}
\def\cS{\mathcal{S}}
\def\cU{\mathcal{U}}
\def\cV{\mathcal{V}}
\def\cW{\mathcal{W}}
\numberwithin{equation}{section}% equation number sectionwise
\begin{document}
\title[]{$L_p$-estimates of the conormal derivative problem for parabolic equations with time measurable coefficients and $A_p$-weights}

\author[H. Dong]{Hongjie Dong}
\address[H. Dong]{Division of Applied Mathematics, Brown University, 182 George Street, Providence, RI 02912, USA.}
\email{Hongjie\_Dong@brown.edu}
\thanks{H. Dong was partially supported by the NSF under agreement DMS-2350129.}
\author[P. Jung]{Pilgyu Jung}
\address[P. Jung]{Department of mathematics, Yonsei University, 50 Yonsei-Ro, Seodaemun-Gu, Seoul, 03722, Republic of Korea}
\email{pilgyujung@skku.edu}
\thanks{P. Jung was supported by the National Research Foundation of Korea (NRF) under agreement NRF-2022R1A2C1003322 and RS-2022-NR069609.}
\author[D. Kim]{Doyoon Kim}
\address[D. Kim]{Department of Mathematics, Korea University, 145 Anam-ro, Seongbuk-gu, Seoul, 02841, Republic of Korea}
\email{doyoon\_kim@korea.ac.kr}
\thanks{D. Kim was supported by the National Research Foundation of Korea (NRF) grant funded by the Korea government (MSIT) (RS-2025-16065192).}

\begin{abstract}
This paper investigates weighted mixed-norm estimates for divergence-type parabolic equations on Reifenberg-flat domains with the conormal derivative boundary condition. The leading coefficients are assumed to be merely measurable in the time variable and to have small mean oscillations in the spatial variables. In deriving the boundary estimates, we overcome a regularity issue by employing half-time derivative estimates.
\end{abstract}

\maketitle

\section{introduction}

Over the past few decades, extensive literature has been devoted to $L_p$-theory for elliptic and parabolic equations with irregular leading coefficients and domains. For elliptic equations, numerous results address interior and boundary estimates for the Dirichlet problem. In particular, in \cite{MR1911202,MR1191890,MR1088476,MR1405255}, when the leading coefficients were assumed to belong to the class of vanishing mean oscillation (VMO), the authors employed singular integral techniques, specifically perturbation arguments based on representation formulas of solutions and the Coifman-Rochberg-Weiss commutator theorem. These results were subsequently extended in \cite{MR2110431,MR2069724} to Lipschitz and Reifenberg-flat domains.
Unlike earlier works, these authors did not rely on integral representations of solutions; 
instead, they used a level set argument due to Caffarelli and Peral \cite{MR1486629} and the ``crawling of ink spots'' lemma, originally due to Safonov and Krylov \cite{MR563790}.

For parabolic equations, the development proceeded in parallel with that of elliptic equations. We refer to \cite{MR1239929} for the singular integral approach.
In \cite{MR2304157}, parabolic equations in $\bR^d$ with leading coefficients belonging to VMO in the spatial variables and merely measurable in time were investigated.
The method there makes use of pointwise bounds for sharp functions of derivatives of solutions along with the Hardy-Littlewood maximal function theorem and the Fefferman-Stein theorem for sharp functions.
Inspired by this approach, 
\begin{comment}
{\color{blue}
\cite{MR2771670}: measurable in time, VMO in $x$, whole space, half, Lipschitz, only homogeneous Dirichlet boundary condition.

\cite{MR2835999} measurable in one spatial variable, whole, half, Reifenberg flat domain, Dirichlet boundary condition.
}
\end{comment}
\cite{MR2771670, MR2835999} obtained interior and boundary $L_p$-estimates for elliptic and parabolic equations/systems in $\bR^d$, on a half-space $\bR^d_+$, and on Lipschitz or Reifenberg-flat domains when the homogeneous Dirichlet boundary condition is imposed. 
The coefficients in \cite{MR2771670, MR2835999}  are allowed to be merely measurable in one variable (one spatial variable or the time variable) and have small mean oscillation estimates in the remaining variables.
We refer to such coefficients, in short, as partially small BMO coefficients.

Despite these advances, the $L_p$-theory for parabolic equations remains largely open in the case of Neumann boundary conditions (i.e., conormal derivative boundary conditions) when the coefficients are irregular, especially when they are merely measurable in time, and the domain has an irregular boundary, such as a Reifenberg-flat domain.
In particular, when temporal irregularity of the coefficients is combined with geometric irregularity of the boundary, the problem becomes genuinely non-trivial: one is forced to treat $u_t$ as a distribution rather than as a function belonging to any $L_p$ space.

The novelty of this paper lies in the introduction of fractional time derivatives into the analysis, allowing us to control terms that would otherwise be unmanageable when the coefficients are irregular in time and the boundary is rough.
We study second-order divergence form parabolic equations of the type
\begin{equation}
    \label{eq_intro}
    u_t-D_i(a_{ij}D_j u)+\lambda u= D_t^{1/2} h+D_ig_i+f \quad \text{in $\bR\times \Omega$}
\end{equation}
with the conormal derivative boundary condition on $\bR\times \partial\Omega$ (cf. \eqref{eq0523_04}), where $\lambda \ge0$ and $\Omega$ is a Reifenberg-flat domain.
We aim to derive $L_p$-estimates for parabolic equations with time-irregular leading coefficients (as in \cite{MR2304157}), and moreover to establish weighted mixed-norm estimates as in \cite{MR3812104, MR3947859}.
We emphasize that the equation \eqref{eq_intro} differs from conventional parabolic equations in divergence form, as it contains the so-called half-time derivative of $h$ on the right-hand side.
Indeed, this term is essential in dealing with parabolic equations defined on Reifenberg-flat domains with coefficients $a_{ij}$ that lack regularity in time.
Even in the case $h\equiv 0$, there are, to the best of our knowledge, no existing $L_p$-results for solutions to the equation in this setting and our method relies essentially on half-time derivative estimates of $u$.
The results for \eqref{eq_intro} are then used to derive corresponding results for conventional parabolic equations (without $D_t^{1/2}h$ term) on $(0,T) \times \Omega$, under the conormal derivative boundary condition with the same irregular coefficients and boundary.

To outline the key idea, we consider the following model equation:
\begin{equation}
    \label{model eq}    
u_t-D_i(a_{ij}(t)D_ju)+\lambda u=D_ig_i+f \quad \text{in $\bR\times \Omega$}
\end{equation} 
with the conormal derivative boundary condition on $\bR\times \partial \Omega$ (cf. \eqref{def0814_1}).
Here we set $h=0$ to clarify the argument. Prior to introducing our approach, we review related literature and methods that have been employed in this context.
The corresponding elliptic equations were considered in \cite{MR2107043} for coefficients of small mean oscillations in all variables (small BMO) on Reifenberg-flat domains.
Later, \cite{MR3013054} extended the theory to second- and higher-order elliptic systems with partially small BMO coefficients by avoiding the compactness argument in \cite{MR2107043}, which does not apply to equations/systems with such coefficients.
The key observation (\cite[Lemma 4.2]{MR3013054}) is that, after modifying the right-hand side slightly, the solution $u$ still satisfies the equation with the conormal derivative boundary condition on a subdomain with flat boundary.

For the parabolic case, applying the method in \cite{MR3013054} directly to \eqref{model eq} produces an additional term of the form $u_t I_{\Omega^*}$ on the right-hand side, where $\Omega^*$ denotes the portion of the Reifenberg-flat domain outside a half-space, and $\Omega^*$ has sufficiently small volume. However, solutions to parabolic equations in divergence form generally do not satisfy $u_t \in L_p$, which creates a regularity difficulty. If the leading coefficients are independent of $t$, this issue can be resolved, since $u_t$ is itself a solution whenever $u$ is a homogeneous solution of \eqref{model eq} near the boundary; see \cite[Corollary 4.7]{MR2139880} and \cite[Proposition 5.1]{MR4387198}.

In contrast, when the leading coefficients $a_{ij}$ depend on $t$ without further regularity assumptions, this approach is no longer applicable. This is precisely why no results exist for the $L_p$-theory of the conormal derivative problem with time-irregular leading coefficients. To overcome this obstacle, we employ half-time derivative estimates of $u$, denoted $D_t^{1/2}u$ (see \eqref{eq0513_03} for the definition). Specifically, we rewrite
\[
u_t I_{\Omega^*}=-D_t^{1/2}(H(D_t^{1/2}u) I_{\Omega^*}),
\]
where $H$ denotes the Hilbert transform in $t$.
Consequently, near a subdomain with a flat boundary, equation \eqref{model eq} reduces to
\[
u_t-D_i(a_{ij}(t)D_ju)+\lambda u\approx D_t^{1/2}(H(D_t^{1/2}u)I_{\Omega^*})+D_ig_i+f,
\]
so that even when $h=0$, a term involving the half-time derivative appears on the right-hand side.
We then derive estimates for solutions that include $D_t^{1/2}u$.
In these estimates, the quantity $(D_t^{1/2}u) I_{\Omega^*}$ arises on the right-hand side; to handle this term, we absorb it into the left-hand side by exploiting the smallness of the volume of $\Omega^*$.

The remainder of the paper is organized as follows.
In section \ref{sec main result}, we introduce some notation and function spaces, and then state the main results.
Section \ref{sec auxiliary} is devoted to $L_p$-solvability and local $L_p$-estimates for certain domains.
In Section \ref{sec estimates on a Reifenberg}, we establish boundary estimates on Reifenberg-flat domains using the key idea described above.
Finally, in the last section, we prove the main results by applying the level set argument for the case $p=q$, and then extend the conclusions to $p\neq q$ via an extrapolation theorem.

\section{Function spaces and main results}							\label{sec main result}

\subsection{Basic notation}
Let $d \geq 1$ be a positive integer. Let $\Omega$ be a domain in $\bR^d$, where $\bR^d$ denotes the $d$-dimensional Euclidean space. A point in $\bR^d$ is written as $x = (x_1,x_2,\ldots,x_d) = (x_1,x')$, where $x' \in \bR^{d-1}$.
We set
\[
\bR^d_+= \{x\in \bR^d: x_1>0\} \quad \text{and} \quad \bR^{d+1}_+= \bR\times \bR^{d}_+.
\]
We also set $\bR^d_- = \{x \in \bR^d: x_1 < 0 \}$.
Let 
\[
B_r(x)=\{y\in \bR^d:|x-y|<r\}, \quad B_r=B_r(0),
\]
\[
B_r^+(x)=B_r(x)\cap \bR^d_{+}, \quad B_r^+=B_r^+(0),  \quad \Gamma_r=\overline{B_r}\cap \{x_1=0\}
\]
\[
\Omega_r(x)=B_r(x)\cap \Omega, \quad \Omega_r=\Omega_r(0).
\]
For $X=(t,x)\in \bR^{d+1}$, %and positive constants $r,r_1,r_2$,
we set
\[
Q_{r,s}(X)=(t-r^2,t+r^2)\times B_{s}(x), \quad Q_{r,s}^+(X)=(t-r^2,t+r^2)\times B^+_{s}(x),
\]
\[
 Q_{r}(X)=Q_{r,r}(X), \quad Q_{r}^+(X)=Q_{r,r}^+(X), \quad Q_{r}=Q_r(0), \quad Q_r^+=Q_r^+(0),
\]
\[
 C_{r,s}(X)=(t-r^2,t+r^2)\times \Omega_{s}(x), \quad C_r(X)=C_{r,r}(X).
\]
Let $E \subset \bR^{d+1}$.
We write $(u)_E$ to denote
\begin{equation*}
							%\label{eq0521_01}
(u)_E = \dashint_{E} u(t,x) \, dx \, dt = \dashint_{E} u(t,x) \, dX = \frac{1}{|E|}\int_{E} u(t,x) \, dX,
\end{equation*}
where $|E|$ is the $(d+1)$-dimensional Lebesgue measure of $E$.
As above, we often use $dX$ to denote $dx\,dt$ or $dt \, dx$.
In a similar manner, we define the same averaging notation for integrals with respect to $x \in \bR^d$ or $(t,x') \in \bR \times \bR^{d-1}$.
\subsection{Function spaces}
%%%%%%%%%%%%%%%%%%%%%%%%%%%%
Let $\cQ = \bR \times \Omega$.
 $C^\infty(\cQ)$ (resp.\ $C^\infty(\overline{\cQ}))$ denotes the set of infinitely differentiable functions on $\cQ$ (resp.\ on its closure \(\overline{\cQ}\)). Let $C_0^\infty(\cQ)$ (resp.\ $C_0^\infty(\overline{\cQ}))$ be the set of all functions in \(C^\infty(\cQ)\) (resp.\ in \(C^\infty(\overline{\cQ})\)) and compactly supported in \(\cQ\) (resp.\ in its closure \(\overline{\cQ}\)).

We define the Muckenhoupt $A_p$-weights. 
Let $p\in (1,\infty)$.
The Muckenhoupt class $A_p(\bR^d)$ consists of all nonnegative functions $\omega(x)$ on $\bR^d$ satisfying
\[
[\omega]_{A_p} := \sup_{x\in \bR^d,r>0} \left( \dashint_{B_r(x)} \omega \, dx \right) \left( \dashint_{B_r(x)} \omega^{-\frac{1}{p-1}} \, dx \right)^{p-1} < \infty.
\]
In the same manner, we define $A_p$-weights on $\bR$. We note that the notion of $A_p$ weights on $\mathbb{R}^{d+1}$ is formulated using parabolic cylinders $Q_r(t,x)$ rather than Euclidean balls.

We are now ready to introduce function spaces. 
Let $p,q\in (1,\infty)$, $\Omega$ be a domain in $\bR^d$, $-\infty\le S<T\le \infty$,  $\omega_1\in A_p(\bR^d)$ and $\omega_2\in A_q(\bR)$.  We set
\[
\omega(t,x):=\omega_1(x)\omega_2(t), \quad t\in \bR, \quad x\in \bR^d.
\]
We denote by $L_{p,q,\omega}((S,T)\times \Omega)$ the set of all measurable functions $u$ on $(S,T)\times \Omega$ having a finite norm
\[
\|u\|_{L_{p,q,\omega}((S,T)\times \Omega)}
=\left(\int_{S}^T\left(\int_{\Omega} |u|^p \omega_1(x)\,dx\right)^{q/p}\omega_2(t)\,dt\right)^{1/q}.
\]
We define $L_{p,\omega_1}(\Omega)$ and $L_{q,\omega_2}((S,T))$ by the same manner.

Next, we define the half-time derivative. 
For $\varphi \in C_0^\infty(\cQ)$ (or for sufficiently smooth $\varphi$ such that the integral below is finite), we define the half-time derivative of $\varphi$ as
\begin{equation*}
    %\label{closed form}
D_t^{1/2} \varphi{{(t,x)}} =  \frac{1}{\sqrt{8\pi}} \int_\bR \frac{\varphi(t+\ell,x)-\varphi(t,x)}{|\ell|^{3/2}} \, d\ell.
\end{equation*}
For a general $u\in L_{p,q,\omega}(\cQ)$, we define the half-time derivative in the weak sense. Namely, if there exists a locally integrable function $v$ on $\cQ$ such that 
\begin{equation}
							\label{eq0513_03}
\int_\cQ  u \, D_t^{1/2} \varphi \, dX = \int_\cQ v \, \varphi \, dX
\end{equation}
for all $\varphi \in C_0^\infty(\cQ)$, then $v$ is called the half-time derivative of $u$, and we write
\[
v = D_t^{1/2}u.
\]
We define
\begin{equation*}
							%\label{eq0814_01}
H_{p,q,\omega}^{1/2,1}(\cQ)= \{u:u,Du,D_t^{1/2}u\in L_{p,q,\omega}(\cQ)\},
\end{equation*}
equipped with the norm 
\[
\|u\|_{H_{p,q,\omega}^{1/2,1}(\cQ)}=\|D_t^{1/2}u\|_{L_{p,q,\omega}(\cQ)}+\|Du\|_{L_{p,q,\omega}(\cQ)}+\|u\|_{L_{p,q,\omega}(\cQ)}.
\]
Here, $Du=(D_1u,D_2u,\ldots,D_d u)=(D_{x_1}u,D_{x_2}u,\ldots,D_{x_d}u)$ indicates the gradient of $u$ in the spatial variable.
%We denote by $\mathring{H}_{p,q,\omega}^{1/2,1}(\cQ)$ the completion of $C_0^\infty(\cQ)$ in the norm of $H_{p,q,\omega}^{1/2,1}(\cQ)$.

We conclude this section by introducing the function spaces associated with parabolic equations that do not involve half-time derivative terms.
Let
\[
\bH_{p,q,\omega}^{-1}((S,T)\times \Omega)=\left\{u:u=D_ig_i+f:g_i,f\in L_{p,q,\omega}((S,T)\times \Omega))\right\},
\]
\begin{multline*}
\|u\|_{\bH_{p,q,\omega}^{-1}((S,T)\times \Omega)}
\\
=\operatorname{inf}\left\{\sum_{i=1}^d \|g_i\|_{L_{p,q,\omega}((S,T)\times \Omega)}+\|f\|_{L_{p,q,\omega}((S,T)\times \Omega)}:u=D_ig_i+f \right\},
\end{multline*}
\begin{align*}
&\cH_{p,q,\omega}^{1}((S,T)\times \Omega)\\
&=\left\{u:u_t\in\bH_{p,q,\omega}^{-1}((S,T)\times \Omega), Du,u\in L_{p,q,\omega}((S,T)\times \Omega)\right\},\\
&\|u\|_{\cH_{p,q,\omega}^1((S,T)\times \Omega)}\\
&=\|u_t\|_{\bH_{p,q,\omega}^{-1}((S,T)\times \Omega)}+\|Du\|_{L_{p,q,\omega}((S,T)\times \Omega)}+\|u\|_{L_{p,q,\omega}((S,T)\times \Omega)}.
\end{align*}
Let $m\in \mathbb{N}$.
For a multi-index $\alpha=(\alpha_1,\dots,\alpha_d)$, we use the notation $D^\alpha u=D_1^{\alpha_1}\dots D_d^{\alpha_d} u=D_{x_1}^{\alpha_1}\dots D_{x_d}^{\alpha_d} u$. Let
\[
W_{p,q,\omega}^{1,m}\left((S,T)\times \Omega\right)=\{u: u, u_t, D^\alpha u\in L_{p,q,\omega}\left((S,T)\times \Omega\right)\,\,  \text{for} \,\, |\alpha|\le m\}.
\]
Similarly, $W_{p,q,\omega}^{0,m}\left((S,T) \times \Omega\right)$ is defined as above, but without the condition $u_t \in L_{p,q,\omega}\left((S,T) \times \Omega\right)$.

Throughout this paper,  when \( p = q \), we omit \( q \) in the notation of the function space. Moreover, if the weight is equal to 1, we also omit the notation for the weight.

\subsection{Main results}

In this section, we begin by introducing the Hilbert transform to define solutions. Let $\Omega\subset \bR^d$ be a domain and set $\cQ=\bR\times \Omega$. Let $p,q\in (1,\infty)$, $\omega_1\in A_p(\bR^d)$, $\omega_2\in A_q(\bR)$, and denote $\omega(t,x)=\omega_1(x)\omega_2(t)$.  
For $u \in L_{p,q,\omega}(\cQ)$, let $H(u)$ represent the Hilbert transform of $u$ with respect to the time variable defined as follows.
If $u \in C_0^\infty(\cQ)$, one can write
\[
H(u)(t,x) = \lim_{\varepsilon \to 0} H^{(\varepsilon)} (u)(t,x) = \lim_{\varepsilon \to 0} \frac{1}{\pi} \int_{|s| \geq \varepsilon} \frac{u(t-s,x)}{s} \, ds.
\]
 
The boundedness of the Hilbert transform on $L_{p,q,\omega}(\cQ)$ follows immediately in the case $p=q$: one first integrates in $t$ 
and then applies the weighted norm estimate for the Hilbert transform 
from \cite[Theorem 9]{MR312139}. When $p\neq q$, we use the extrapolation theorem in \cite{MR3812104}, which implies there exists a constant $N$ depending only on $d,p,q,[\omega_1]_{A_p},[\omega_2]_{A_q}$ such that
\[
\|H(u)\|_{L_{p,q,\omega}(\cQ)}\le N\|u\|_{L_{p,q,\omega}(\cQ)} \quad \text{for all $u\in L_{p,q,\omega}(\cQ)$.}
\]

We introduce the equations together with the definitions of their corresponding solutions.
For $1\le i,j\le d$, let  $a_{ij}$, $b_{i}$, $c$ be bounded measurable functions on $\cQ$.
Let $-\infty\le S<T\le \infty$. Define $p'$ and $q'$ be the conjugate exponents of $p$ and $q$, respectively and \[\omega'(t,x)=\omega_1'(x)\omega_2'(t)=\omega_1^{\frac{1}{1-p}}(x)\omega_2^{\frac{1}{1-q}}(t),\] so that the dual space of $L_{p,q,\omega}((S,T)\times \Omega)$ is $L_{p',q',\omega'}((S,T)\times \Omega)$.
\begin{definition}
							\label{def0814_2} 
For $h, g_i, f\in L_{p,q,\omega}(\cQ)$,
we say that $u\in H_{p,q,\omega}^{1/2,1}(\cQ)$ is a solution to the equation 
\begin{equation}
\label{def_eq}
u_t-D_i(a_{ij}D_j u)+b_{i}D_iu+c u =D_t^{1/2}h+D_ig_i+f \quad \text{in} \,\, \cQ    
\end{equation}
with the conormal derivative boundary condition on $\bR\times \partial \Omega$
if  for any $\varphi\in H_{p',q',\omega'}^{1/2,1}({\cQ})$, we have 
\begin{multline}
\label{eq0523_04}
\int_\cQ\left(- H(D_t^{1/2}u) \, D_t^{1/2}\varphi +a_{ij}D_juD_i\varphi+b_{i}D_iu \, \varphi+ cu\varphi\right)\,dX
\\
=\int_\cQ \left(h \, D_t^{1/2}\varphi
-g_iD_i\varphi+ f\varphi\right)\,dX.
\end{multline}
\end{definition}
\begin{remark}
In \eqref{eq0523_04}, it is sufficient to take test functions from \( C_0^\infty(\overline{\mathcal{Q}}) \)  instead of \( H_{p',q',\omega'}^{1/2,1}(\mathcal{Q}) \) provided that \( \Omega \) satisfies a suitable flatness condition near the boundary (e.g., Reifenberg-flat domains;  see \cite{MR3186805}). Such geometric assumptions ensure that \( C_0^\infty(\overline{\mathcal{Q}}) \) is dense in \( H_{p',q',\omega'}^{1/2,1}(\mathcal{Q}) \); see, for instance, \cite{MR1774162} and the references therein.
\end{remark}
Since our main results, specifically Theorem \ref{conormal -infty to T} and Corollary \ref{conormal bounded interval} presented below, include the usual parabolic equations (without half-derivative terms), we provide the definition of their solutions as well.
\begin{definition}
							\label{def0814_1}
For $g_i, f \in L_{p,q,\omega}((S,T)\times \Omega)$, we say that $u\in \cH_{p,q,\omega}^{1}((S,T)\times \Omega)$ is a solution to the equation 
\begin{equation}
							\label{eq0819_01}
u_t-D_i(a_{ij}D_j u)+b_{i}D_iu+c u =D_ig_i+f \quad \text{in} \,\, (S,T)\times \Omega
\end{equation}
with the conormal derivative boundary condition on $(S,T)\times \partial \Omega$
if, for any $\varphi \in W^{1,1}_{p',q',\omega'}( (S,T)\times \Omega )$ vanishing at $t=S$ and $t=T$, we have
\begin{multline*}							%\label{eq0819_02}
\int_{(S,T)\times \Omega}\left(- u \varphi_t+a_{ij}D_juD_i\varphi+b_{i}D_iu \varphi+ cu\varphi \right)\,dX
\\
=\int_{(S,T)\times \Omega}\left(-g_iD_i\varphi
+ f\varphi\right)\,dX.
\end{multline*}
\end{definition}

\begin{remark}
							\label{rem0902_2}
If $u\in H_{p,q,\omega}^{1/2,1}(\cQ)$, then, using an appropriate approximation and the Fourier transform, we have
\[
\int_\cQ H(D_t^{1/2}u) \, D_t^{1/2}\varphi\,dX= \int_\cQ u \varphi_t\,dX
\]
for $\varphi \in W_{p',q',\omega'}^{1,1}(\cQ)$. See, for instance, \cite[Lemmas 3.4, 3.6(4), 3.7]{MR4920684} for the case $p=q$ and $\omega\equiv 1$.
Hence, Definition \ref{def0814_2}, when $h \equiv 0$, corresponds to the definition of weak solutions to the usual parabolic equations with the conormal derivative boundary condition with $S=-\infty$ and $T=\infty$.
\end{remark}

\begin{remark}
							\label{rem1213_1}
In this paper, we sometimes impose the conormal derivative boundary condition on a {\em subset} of the lateral boundary $\bR \times \partial \Omega$.
For instance, if $\cC$ is a subset of $\partial\Omega$, we impose the conormal derivative boundary condition on 
$\bR \times \cC$.
To do this, we use Definition \ref{def0814_2} with test functions $\varphi$ that, if the restriction or trace is well-defined, are zero on $\bR \times (\partial\Omega \setminus \cC)$.
Thus, if we additionally impose the Dirichlet boundary condition on $\bR \times (\partial\Omega \setminus \cC)$, the boundary condition becomes a mixed boundary condition.
If $\cC = \emptyset$, that is, if the test functions are chosen to be zero on $\bR \times \partial\Omega$, then the  equality \eqref{eq0523_04} means that the equation in \eqref{def_eq} holds with no boundary conditions being specified (i.e., the interior case).
More precise descriptions will be provided whenever the conormal derivative boundary condition is imposed on a subset of the lateral boundary. We also consider equations as in \eqref{eq0819_01} with the conormal derivative boundary condition imposed on a subset of the lateral boundary.
\end{remark}

Throughout the paper, we impose the strong ellipticity condition and boundedness on the coefficients $a_{ij}$ with $\delta>0$. Specifically, we assume that
\begin{equation}
    \label{eq0521_02}
\delta|\xi|^2 \le a_{ij}(t,x)\xi_i\xi_j, \quad |a_{ij}(t,x)|\le \delta^{-1}, 
\end{equation}
for all $\xi\in \bR^d$, and $(t,x)\in \bR^{d+1}$.

Now, we state our regularity assumption on the coefficients $a_{ij}$ and the boundary of the domain $\Omega$.
\begin{assumption}[$\gamma_0$]                                        \label{assum rei coeffi}
Let $\gamma_0\in (0,1/600)$. 
There is a constant $R_0\in (0,1]$ such that the following hold.
\begin{itemize}
    \item[(i)]  For any $(t,x) \in \bR \times \Omega$ and $r\in
(0,R_0\wedge \operatorname{dist}(x,\partial\Omega)/2]$,  we have
\begin{equation}
                            \label{coeffi bmo}
%\fint_{(t-r^{2m},t+r^{2m})}\fint_{\Gamma_r(x)}
\frac{1}{|Q_r|}\int_{Q_r(X)} \Big| a_{ij}(s,y) -
\frac{1}{|B_r|}\int_{B_r(x)} a_{ij}(s,z) \, dz \Big| \, dy\,ds\le
\gamma_{0},
\end{equation}
where $X=(t,x)$.
\item[(ii)] For any $(t,x) \in \bR \times \partial\Omega$ and $r\in
(0,R_0]$, there is a spatial coordinate system depending on $x$ and $r$
such that in this new coordinate system, we have \eqref{coeffi bmo} 
and
\begin{equation*}
 \{(y_1,y'):x^1<y^1\}\cap B_r(x)
 \subset\Omega_r(x)
 \subset \{(y_1,y'):x^1-2\gamma_0 r<y^1\}\cap B_r(x).
\end{equation*}
\end{itemize}
\end{assumption}

\begin{remark}
							\label{rem0902_1}
From \eqref{coeffi bmo}, it follows that, for any $a, b \in \bR$ with $b - a \geq 2r^2$, we have
\[
\frac{1}{b-a}\int_a^b \dashint_{B_r(x)} \left|a_{ij}(s,y) - \dashint_{B_r(x)} a_{ij}(s,z) \, dz \right| \, dy \, ds \leq 2 \gamma_{0}.
\]
See \cite[Remark 2.4]{MR4920684}.
\end{remark}
We are ready to introduce our main results.
Recall that $\cQ = \bR \times \Omega$.

\begin{theorem}
    \label{conormal entire time}
    Let $p,q\in(1,\infty)$, $\omega_1=\omega_1(x)\in A_p(\bR^d)$, $\omega_2=\omega_2(t)\in A_q(\bR)$, $[\omega_1]_{A_p}+[\omega_2]_{A_q}\le K$ for some constant $K\ge 1$, $\lambda \geq 0$, $h,g_i,f\in L_{p,q,\omega}(\cQ)$ with $f \equiv 0$ if $\lambda = 0$. Denote $\omega(t,x)=\omega_1(x)\omega_2(t)$.
Then, there exist positive constants $\gamma_{0}=\gamma_{0}(d,\delta,p,q,K)$ and $N=N(d,\delta,p,q,K)$ such that, under Assumption \ref{assum rei coeffi} ($\gamma_{0}$), for any $u\in H_{p,q,\omega}^{1/2,1}(\cQ)$ satisfying 
        \begin{equation}
        \label{main eq}
    u_t-D_i(a_{ij}D_j u)+\lambda u=D_t^{1/2}h+D_ig_i+f \quad \text{in} \,\, \cQ,
    \end{equation}
    with the conormal derivative boundary condition on $\bR\times\partial\Omega$,
we have 
\begin{multline*}
\|D_t^{1/2}u\|_{L_{p,q,\omega}(\cQ)} + \|Du\|_{L_{p,q,\omega}(\cQ)}+\sqrt{\lambda}\|u\|_{L_{p,q,\omega}(\cQ)}
\\
\leq N \left( \|h\|_{L_{p,q,\omega}(\cQ)} + \|g_i\|_{L_{p,q,\omega}(\cQ)} + \lambda^{-1/2}\|f\|_{L_{p,q,\omega}(\cQ)}\right),
\end{multline*}
provided that $\lambda \ge \lambda_0$, where $\lambda_0=\lambda_0(d,\delta,p,q,K, R_0)\geq0$.
Moreover, for $\lambda  > \lambda_0$, there exists a unique solution $u\in H_{p,q,\omega}^{1/2,1}(\cQ)$ to  \eqref{main eq} with the conormal derivative boundary condition on $\bR\times \partial\Omega$.
\end{theorem}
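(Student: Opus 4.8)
The plan is to reduce Theorem \ref{conormal entire time} to a single a priori estimate combined with the method of continuity, and to obtain that a priori estimate by the Caffarelli--Peral level-set technique applied to the pointwise bounds produced by the boundary estimates on Reifenberg-flat domains (Section \ref{sec estimates on a Reifenberg}) together with the interior estimates (Section \ref{sec auxiliary}). Throughout, the key structural device is the rewriting
\[
u_t\,I_{\Omega^*}=-D_t^{1/2}\bigl(H(D_t^{1/2}u)\,I_{\Omega^*}\bigr)
\]
flagged in the introduction, so that the localized equation on a half-space-like subdomain always falls within the scope of Definition \ref{def0814_2}, i.e.\ the class of equations carrying a $D_t^{1/2}h$ term on the right-hand side. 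I would first treat the unweighted diagonal case $p=q$, $\omega\equiv 1$: establish the a priori estimate
\[
\|D_t^{1/2}u\|_{L_p(\cQ)}+\|Du\|_{L_p(\cQ)}+\sqrt\lambda\|u\|_{L_p(\cQ)}\le N\bigl(\|h\|_{L_p(\cQ)}+\|g_i\|_{L_p(\cQ)}+\lambda^{-1/2}\|f\|_{L_p(\cQ)}\bigr)
\]
for solutions in $H^{1/2,1}_p(\cQ)$ with $\lambda\ge\lambda_0$.

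For this diagonal estimate I would follow the standard scheme. Fix a point and a scale $r$; decompose $u=w+v$ on a cylinder $C_r(X_0)$ (or $Q_r(X_0)$ in the interior), where $w$ solves the same equation with the same right-hand side on a slightly larger cylinder and zero conormal data on the relevant portion of the boundary, and $v=u-w$ solves the homogeneous equation with the original conormal condition. The $L_p$-solvability results of Section \ref{sec auxiliary} control $w$ in terms of $h,g_i,f$, yielding smallness of $|D_t^{1/2}w|^p+|Dw|^p$ on average; the boundary/interior Hölder-type estimates of Section \ref{sec estimates on a Reifenberg} control the oscillation of $D_t^{1/2}v$ and $Dv$ on a smaller cylinder. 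The one genuinely new input is that, on the Reifenberg subdomain, $v$ does not solve a clean parabolic equation but rather one with the extra right-hand side $D_t^{1/2}(H(D_t^{1/2}v)I_{\Omega^*})$; the smallness of $|\Omega^*|$ (guaranteed by Assumption \ref{assum rei coeffi}($\gamma_0$) with $\gamma_0$ small) lets this term be absorbed into the left-hand side after summation, using the boundedness of $H$ and of $D_t^{1/2}$-based operators on $L_p$. Assembling the mean-oscillation bound for $D_t^{1/2}u$ and $Du$, applying the Hardy--Littlewood maximal function theorem and the Fefferman--Stein sharp function theorem (or equivalently the Caffarelli--Peral level-set / "crawling of ink-spots" argument), and then absorbing the small-volume term, gives the a priori estimate for $\lambda$ larger than some $\lambda_0$. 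The $\sqrt\lambda$-weighted terms and the factors $\lambda^{-1/2}$ come from the usual scaling $u\mapsto u$, $x\mapsto\sqrt\lambda\,x$-type normalization reducing general $\lambda$ to $\lambda=1$.

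Next I would pass from the diagonal unweighted case to the weighted mixed-norm case. Having the $L_p$ a priori estimate and solvability for $\omega\equiv 1$, $p=q$, I would run the level-set argument in the weighted setting directly when $p=q$ but $\omega=\omega_1(x)\omega_2(t)$ is a product $A_p$-weight: the same mean-oscillation bounds hold, and weighted boundedness of the maximal function together with the weighted Fefferman--Stein inequality (valid for $A_p$ weights, here for the product weight on parabolic cylinders) upgrades the estimate, with constants depending on $[\omega_1]_{A_p}+[\omega_2]_{A_q}\le K$. Finally, to reach $p\ne q$ I invoke the extrapolation theorem of \cite{MR3812104} exactly as the excerpt does for the Hilbert transform: the map sending the data $(h,g_i,f)$ to $(D_t^{1/2}u,Du,\sqrt\lambda u)$ is, after the diagonal weighted bound, amenable to the Rubio de Francia-type extrapolation with $A_p\times A_q$ weights, producing the stated mixed-norm inequality with $N=N(d,\delta,p,q,K)$ and $\gamma_0=\gamma_0(d,\delta,p,q,K)$.

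For existence and uniqueness when $\lambda>\lambda_0$: uniqueness is immediate from the a priori estimate applied to the difference of two solutions (with $h=g_i=f=0$). For existence I use the method of continuity along the family $\mathcal{L}_\tau=\tau(\text{our operator})+(1-\tau)(\partial_t-\Delta+\lambda)$, $\tau\in[0,1]$: each $\mathcal{L}_\tau$ satisfies Assumption \ref{assum rei coeffi}($\gamma_0$) with the same $\gamma_0$ and the same a priori estimate (uniformly in $\tau$), and at $\tau=0$ solvability in $H^{1/2,1}_{p,q,\omega}(\cQ)$ with the conormal condition is known (constant-coefficient heat operator on the Reifenberg domain — or, since the estimate is uniform in the geometry, one can further freeze to the half-space model handled in Section \ref{sec auxiliary}); the standard continuity argument then gives solvability for $\tau=1$. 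I expect the main obstacle to be precisely the control of the $D_t^{1/2}(H(D_t^{1/2}u)I_{\Omega^*})$ term: one must show that after localization, rescaling, and summation over the level-set decomposition, its contribution is multiplied by a small factor (a power of $|\Omega^*|/|C_r|\lesssim\gamma_0$) so that it can be genuinely absorbed, and this requires careful bookkeeping of how $H$ and the weak half-derivative interact with the cutoffs and with the non-smooth domain — this is the step where treating $u_t$ as a distribution rather than an $L_p$ function is unavoidable and where the half-time derivative framework of $H^{1/2,1}_{p,q,\omega}$ does the essential work.
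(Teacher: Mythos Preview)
Your high-level strategy---level-set argument on the decompositions from Section \ref{sec estimates on a Reifenberg}, extrapolation to $p\ne q$, method of continuity for existence---matches the paper's. But there is a real gap in your plan to ``control the oscillation of $D_t^{1/2}v$ and $Dv$'' in a single pass. When $v$ solves the homogeneous equation $\overline{\cP}_\lambda v=0$ with time-measurable frozen coefficients $\bar a_{ij}(t)$, a high-$L_{p_1}$ (or $L_\infty$) estimate for $D_t^{1/2}v$ is \emph{not} available; only $(Dv,\sqrt\lambda v)$ can be boosted (Lemma \ref{lem half cono local lambda}, Proposition \ref{prop reifen}). The paper explicitly flags this obstruction in the proof of Proposition \ref{prop reifen laplacian}. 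The workaround is a two-pass bootstrap: first run the level-set argument for $\cU=(Du,\sqrt\lambda u)$ alone using Proposition \ref{prop reifen}, obtaining the estimate \eqref{eq6.32} that still carries a term $\gamma_0^{p/\nu'}\|H(D_t^{1/2}u)\|^p$ on the right; then rewrite the equation as $u_t-\Delta u+\lambda u=D_t^{1/2}h+D_i(g_i+(a_{ij}-\delta_{ij})D_ju)+f$ and run the level-set argument for $U=(D_t^{1/2}u,Du,\sqrt\lambda u)$ using Proposition \ref{prop reifen laplacian} (where the Laplacian \emph{does} permit $D_t^{1/2}v$ control, Lemma \ref{lem half cono local lambda laplacian}), yielding \eqref{eq5.54} with $\|Du\|$ on the right. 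Substituting \eqref{eq6.32} into \eqref{eq5.54} and choosing $\tilde\gamma$, then $\gamma_0$, then $\gamma$ in the right order closes the estimate. Without this second pass, your absorption of the half-derivative term cannot succeed.

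Two smaller corrections. First, the extra term $H(D_t^{1/2}u)I_{\Omega^*_R(x_0)}$ enters $\tilde h$ and hence the estimate for $w$, not $v$ (Lemma \ref{lem0901_1}); $v=u-w$ solves a clean homogeneous equation on the half-ball. Second, for existence in the weighted mixed-norm case, the paper does not simply invoke a known $\tau=0$ solvability on the Reifenberg domain: after reducing to $\Delta$ by the method of continuity, it first solves in unweighted $H_{p_1}^{1/2,1}(\cQ)$ for a large auxiliary exponent $p_1$ (chosen via the reverse H\"older inequality for the weights), and then proves a geometric decay of $u,Du$ on the dyadic annuli $\cQ\cap(Q_{2^{k+1}R}\setminus Q_{2^kR})$ to verify that the solution actually lies in $H_{p,q,\omega}^{1/2,1}(\cQ)$.
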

Using Theorem \ref{conormal entire time}, we establish $L_p$-theory for the conventional parabolic conormal problem with time-irregular coefficients $a_{ij}$ on $\Omega_T=(-\infty,T)\times \Omega$ for $T\in (-\infty,\infty]$.

\begin{theorem}
\label{conormal -infty to T}
    Let $p,q\in(1,\infty)$, $T\in (-\infty,\infty]$, $\omega_1=\omega_1(x)\in A_p(\bR^d)$, $\omega_2=\omega_2(t)\in A_q(\bR)$, $[\omega_1]_{A_p}+[\omega_2]_{A_q}\le K$ for some constant $K\ge 1$, $\lambda\ge0$, $g_i, f \in L_{p,q,\omega}(\Omega_T)$ with $f\equiv 0$ if $\lambda=0$. Denote $\omega(t,x)=\omega_1(x)\omega_2(t)$. Then, there exist positive constants $\gamma_{0}=\gamma_{0}(d,\delta,p,q,K)$ and $N=N(d,\delta,p,q,K)$ such that, under Assumption \ref{assum rei coeffi} ($\gamma_{0}$), for any $u\in \cH_{p,q,\omega}^{1}(\Omega_T)$ satisfying 
        \begin{equation}
        \label{main eq -infty to T}
    u_t-D_i(a_{ij}D_j u)+\lambda u=D_ig_i+f \quad \,\,{in} \,\, \Omega_T,
    \end{equation}
    with the conormal derivative boundary condition on $(-\infty,T)\times \partial\Omega$,
        we have 
\begin{equation}
							\label{eq0906_01}
\|Du\|_{L_{p,q,\omega}(\Omega_T)}+\sqrt{\lambda}\|u\|_{L_{p,q,\omega}(\Omega_T)}\le N \left( \|g_i\|_{L_{p,q,\omega}(\Omega_T)}+\lambda^{-1/2}\|f\|_{L_{p,q,\omega}(\Omega_T)}\right),
\end{equation}
provided that $\lambda \ge \lambda_0$, where $\lambda_0=\lambda_0(d,\delta,p,q,K,R_0)\ge0$.
Moreover, for $\lambda>\lambda_0$, there exists a unique solution $u\in  \cH_{p,q,\omega}^{1}(\Omega_T)$ to  \eqref{main eq -infty to T} with the conormal derivative boundary condition on $(-\infty,T)\times \partial\Omega$.
\end{theorem}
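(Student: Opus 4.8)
The plan is to deduce the conventional parabolic estimate \eqref{eq0906_01} from the half-time-derivative result of Theorem \ref{conormal entire time} by an extension-and-restriction argument. First I would treat the case $T = \infty$. Given $u \in \cH_{p,q,\omega}^{1}(\cQ)$ solving \eqref{main eq -infty to T} with the conormal condition, the key point is that $u$ is in fact an $H_{p,q,\omega}^{1/2,1}(\cQ)$-solution of \eqref{main eq} with $h \equiv 0$. To see this I would first establish that $u \in H_{p,q,\omega}^{1/2,1}(\cQ)$: since $u_t = D_i(g_i - a_{ij}D_j u) + (f - \lambda u) \in \bH^{-1}_{p,q,\omega}(\cQ)$ and $Du, u \in L_{p,q,\omega}(\cQ)$, a parabolic interpolation / half-derivative estimate — using that $D_t^{1/2}$ is, roughly, the ``geometric mean'' of $\partial_t$ and the identity — gives $D_t^{1/2}u \in L_{p,q,\omega}(\cQ)$. (Concretely, one writes $D_t^{1/2}u = H(D_t^{-1/2} u_t)$ and uses boundedness of the Hilbert transform together with the $\bH^{-1}$-decomposition of $u_t$.) Once $u \in H^{1/2,1}_{p,q,\omega}(\cQ)$, Remark \ref{rem0902_2} identifies the weak formulation in Definition \ref{def0814_1} (with $S=-\infty$, $T=\infty$) with the weak formulation \eqref{eq0523_04} for $h \equiv 0$. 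Theorem \ref{conormal entire time} then applies and yields
\[
\|D_t^{1/2}u\|_{L_{p,q,\omega}(\cQ)} + \|Du\|_{L_{p,q,\omega}(\cQ)} + \sqrt{\lambda}\,\|u\|_{L_{p,q,\omega}(\cQ)} \le N\bigl(\|g_i\|_{L_{p,q,\omega}(\cQ)} + \lambda^{-1/2}\|f\|_{L_{p,q,\omega}(\cQ)}\bigr),
\]
for $\lambda \ge \lambda_0$, which in particular gives \eqref{eq0906_01}. Uniqueness and existence for $\lambda > \lambda_0$ transfer along the same identification, using that the solution produced by Theorem \ref{conormal entire time} has $D_t^{1/2}u \in L_{p,q,\omega}$, hence $u_t \in \bH^{-1}_{p,q,\omega}$, so it lies in $\cH^1_{p,q,\omega}(\cQ)$.

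For the case of a finite $T$, the strategy is the standard extension trick for backward-in-time parabolic problems. Given $g_i, f$ on $\Omega_T = (-\infty,T)\times\Omega$, extend them by zero to $\bR \times \Omega$, solve the extended equation \eqref{main eq -infty to T} on all of $\bR \times \Omega$ by the $T=\infty$ case just established to obtain $\tilde u \in \cH^1_{p,q,\omega}(\bR\times\Omega)$, and then restrict: $u := \tilde u|_{(-\infty,T)\times\Omega}$. By causality (finite speed being irrelevant, but the equation being first order in $t$ and the data vanishing for $t > T$ is what matters) one checks $\tilde u \equiv 0$ for $t > T$ when $\lambda > 0$ — or, more robustly, one argues directly that the restriction solves \eqref{main eq -infty to T} on $\Omega_T$ with the conormal condition and that the norm of $u$ on $\Omega_T$ is bounded by the norm of $\tilde u$ on $\bR\times\Omega$, which is bounded by the norms of the zero-extended data, i.e.\ by the norms of $g_i, f$ on $\Omega_T$. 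For uniqueness on $\Omega_T$: if $u$ solves the homogeneous problem on $\Omega_T$, extend $u$ itself by zero past $t = T$; one must verify this zero-extension lies in $\cH^1_{p,q,\omega}(\bR\times\Omega)$ and solves the homogeneous equation on all of $\bR\times\Omega$ (here the vanishing of $u$ as $t \uparrow T$, which is part of belonging to $\cH^1$ up to the terminal time together with the test-function convention of Definition \ref{def0814_1}, is used), and then invoke uniqueness from the $T = \infty$ case.

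The main obstacle I anticipate is the first step of the $T = \infty$ case: rigorously showing that a function $u \in \cH^1_{p,q,\omega}(\cQ)$ automatically has $D_t^{1/2}u \in L_{p,q,\omega}(\cQ)$ with a quantitative bound, and that the two notions of weak solution coincide in the weighted mixed-norm setting. The cited references (e.g.\ \cite[Lemmas 3.4, 3.6(4), 3.7]{MR4920684}) cover $p = q$ with trivial weight; extending the density of $C_0^\infty(\overline{\cQ})$ and the Fourier-analytic identity of Remark \ref{rem0902_2} to $A_p$-$A_q$ weights requires care — one would approximate, use the weighted boundedness of $H$ and of the operators $D_t^{\pm 1/2}$ composed with $H$ on $L_{p,q,\omega}$ (which follows for $p=q$ from \cite{MR312139} and for $p\ne q$ from the extrapolation theorem of \cite{MR3812104}, exactly as done for $H$ itself in the paragraph preceding Definition \ref{def0814_2}), and pass to the limit. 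The remaining steps — zero-extension of data, restriction, and the transfer of existence/uniqueness — are routine once this identification is in place.
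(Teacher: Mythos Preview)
Your proposal has a genuine gap at the key step for $T=\infty$: the formula $D_t^{1/2}u = H(D_t^{-1/2}u_t)$ does not yield $D_t^{1/2}u \in L_{p,q,\omega}$, because $D_t^{-1/2}$ is a fractional integral (Riesz potential) in $t$ and is \emph{unbounded} on $L_{p,q,\omega}$. The $\bH^{-1}$-decomposition $u_t = D_iG_i + F$ gains only spatial regularity, which cannot compensate this temporal loss. Making the ``geometric mean'' heuristic rigorous would amount to a vector-valued interpolation statement of the type $\bigl[W^{1,p}_{\omega_1}(\Omega),\, (W^{1,p'}_{\omega_1'}(\Omega))^*\bigr]_{1/2} = L_{p,\omega_1}(\Omega)$ on a Reifenberg-flat domain with conormal boundary data and $A_p$-weights --- a statement you neither prove nor cite, and which would in any case require machinery comparable to what is being established. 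The paper avoids this entirely: rather than embed $\cH^1$ into $H^{1/2,1}$, it proves uniqueness in $\cH^1_p(\Omega_T)$ directly (Lemma~\ref{lem0906_2}: mollify in $t$, test against $|v^{(\varepsilon)}|^{p-2}v^{(\varepsilon)}$) for $p\ge 2$ unweighted, uses duality for $p<2$, and for the weighted mixed-norm case runs a separate cutoff and reverse-H\"older argument to reduce uniqueness to an unweighted $L_{p_0}$ estimate. With uniqueness in hand, one zero-extends the data past $T$, solves on $\bR\times\Omega$ via Theorem~\ref{conormal entire time}, observes from the equation that the resulting $\bar u$ lies in $\cH^1$, and concludes $u=\bar u|_{\Omega_T}$.

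Your uniqueness argument for finite $T$ also fails: a solution of the homogeneous problem on $(-\infty,T)$ need not vanish at $t=T$ --- Definition~\ref{def0814_1} imposes no terminal condition on $u$, only on the test functions --- so its zero extension past $T$ is generally not in $\cH^1(\bR\times\Omega)$ and does not solve the homogeneous equation there.
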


Finally, we state our results for the corresponding Cauchy problem with $T \in (0,\infty)$, taking as the solution space 
$\cH_{p,q,\omega,0}^1((0,T)\times \Omega)$, 
the subspace of $\cH_{p,q,\omega}^{1}((0,T)\times \Omega)$ of functions $u$ satisfying $u(0,x)=0$.
\begin{corollary}
    \label{conormal bounded interval}
    Let $p,q\in(1,\infty)$, $T\in (0,\infty)$, $\omega_1=\omega_1(x)\in A_p(\bR^d)$, $\omega_2=\omega_2(t)\in A_q(\bR)$, $[\omega_1]_{A_p}+[\omega_2]_{A_q}\le K$ for some constant $K\ge 1$, $g_i, f \in L_{p,q,\omega}((0,T)\times \Omega)$. Denote $\omega(t,x)=\omega_1(x)\omega_2(t)$. Then, there exist positive constants $\gamma_{0}=\gamma_{0}(d,\delta,p,q,K)$ and $N=N(d,\delta,p,q,K,R_0,T)$ such that, under Assumption \ref{assum rei coeffi} ($\gamma_{0}$), for any $u\in \cH_{p,q,\omega,0}^{1}((0,T)\times \Omega)$ satisfying 
        \begin{equation}
        \label{main eq bdd interval}
    u_t-D_i(a_{ij}D_j u)=D_ig_i+f \quad \,\,{in} \,\, (0,T)\times \Omega,
    \end{equation}
    with the conormal derivative boundary condition on $(0,T)\times \partial\Omega$,
        we have 
\begin{multline}
							\label{eq0910_9}
\|Du\|_{L_{p,q,\omega}((0,T)\times \Omega)}+\|u\|_{L_{p,q,\omega}((0,T)\times \Omega)}
\\
\le N \left( \|g_i\|_{L_{p,q,\omega}((0,T)\times \Omega)}+\|f\|_{L_{p,q,\omega}((0,T)\times \Omega)}\right).
\end{multline}
Moreover, there exists a unique solution $u\in  \cH_{p,q,\omega,0}^{1}((0,T)\times \Omega)$ to \eqref{main eq bdd interval} with the conormal derivative boundary condition on $(0,T)\times \partial\Omega$.
\end{corollary}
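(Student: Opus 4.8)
The plan is to deduce Corollary~\ref{conormal bounded interval} from Theorem~\ref{conormal -infty to T} by combining a zero-extension in the time variable with an exponential substitution that introduces the zeroth-order term $\lambda u$ required by that theorem. Fix once and for all a value $\lambda=\lambda_0+1>\lambda_0$, where $\lambda_0=\lambda_0(d,\delta,p,q,K,R_0)$ is the threshold from Theorem~\ref{conormal -infty to T}, and keep the same $\gamma_0=\gamma_0(d,\delta,p,q,K)$.

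\emph{A priori estimate.} Let $u\in \cH_{p,q,\omega,0}^{1}((0,T)\times\Omega)$ solve \eqref{main eq bdd interval} with the conormal derivative boundary condition, and let $\bar u$ be its extension by zero to $(-\infty,0)\times\Omega$. Since $u(0,\cdot)=0$, one checks in a standard way that $\bar u\in \cH_{p,q,\omega}^{1}((-\infty,T)\times\Omega)$ and that $\bar u$ solves
\[
\bar u_t-D_i(a_{ij}D_j\bar u)=D_i\bar g_i+\bar f \quad\text{in }(-\infty,T)\times\Omega
\]
with the conormal derivative boundary condition, where $\bar g_i,\bar f$ denote the zero extensions of $g_i,f$; the vanishing of $u$ at $t=0$ is precisely what kills the initial-time boundary contribution in the weak formulation. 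Put $v:=e^{-\lambda t}\bar u$. A direct computation shows that $v\in \cH_{p,q,\omega}^{1}((-\infty,T)\times\Omega)$, that $v=0$ for $t\le 0$, and that $v$ solves
\[
v_t-D_i(a_{ij}D_j v)+\lambda v=D_i(e^{-\lambda t}\bar g_i)+e^{-\lambda t}\bar f \quad\text{in }(-\infty,T)\times\Omega
\]
with the conormal derivative boundary condition. Because $e^{-\lambda t}\bar g_i$ and $e^{-\lambda t}\bar f$ vanish for $t\le 0$ and $0<e^{-\lambda t}\le 1$ on $(0,T)$, their $L_{p,q,\omega}((-\infty,T)\times\Omega)$ norms are bounded by $\|g_i\|_{L_{p,q,\omega}((0,T)\times\Omega)}$ and $\|f\|_{L_{p,q,\omega}((0,T)\times\Omega)}$. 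Applying Theorem~\ref{conormal -infty to T} to $v$, and then passing back via $u=e^{\lambda t}v$ on $(0,T)\times\Omega$ using $e^{-\lambda T}\le e^{-\lambda t}\le 1$, yields \eqref{eq0910_9} with $N=N(d,\delta,p,q,K,R_0,T)$; here the dependence on $R_0$ and $T$ enters only through $\lambda_0$ and the factor $e^{\lambda T}$.

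\emph{Existence and uniqueness.} Conversely, given $g_i,f\in L_{p,q,\omega}((0,T)\times\Omega)$, let $\tilde g_i,\tilde f$ be the extensions by zero to $(-\infty,T)\times\Omega$ of $e^{-\lambda t}g_i$ and $e^{-\lambda t}f$; these lie in $L_{p,q,\omega}((-\infty,T)\times\Omega)$. By Theorem~\ref{conormal -infty to T} there is a unique $v\in \cH_{p,q,\omega}^{1}((-\infty,T)\times\Omega)$ solving $v_t-D_i(a_{ij}D_j v)+\lambda v=D_i\tilde g_i+\tilde f$ with the conormal derivative boundary condition. On $(-\infty,0)\times\Omega$ the right-hand side vanishes, so the restriction of $v$ solves the homogeneous conormal problem in $\cH_{p,q,\omega}^{1}((-\infty,0)\times\Omega)$, and the uniqueness part of Theorem~\ref{conormal -infty to T} (applied with $T$ replaced by $0$) forces $v\equiv 0$ for $t\le 0$; in particular $v|_{(0,T)\times\Omega}\in \cH_{p,q,\omega,0}^{1}((0,T)\times\Omega)$. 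Setting $u:=e^{\lambda t}v$ on $(0,T)\times\Omega$ and reversing the computation above produces a solution $u\in \cH_{p,q,\omega,0}^{1}((0,T)\times\Omega)$ of \eqref{main eq bdd interval} with the conormal derivative boundary condition. Uniqueness in this class is immediate from the estimate \eqref{eq0910_9} applied to the difference of two solutions with $g_i=f\equiv 0$.

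The genuinely delicate point — and the one I expect to require the most care, although it is routine — is the claim in the \emph{a priori estimate} step that the zero-extension $\bar u$ lies in $\cH_{p,q,\omega}^{1}((-\infty,T)\times\Omega)$ and satisfies the weak conormal formulation there. This amounts to showing that $(\bar u)_t=\overline{u_t}$ in $\bH_{p,q,\omega}^{-1}((-\infty,T)\times\Omega)$, which one obtains by invoking the time-continuity of functions in $\cH_{p,q,\omega}^{1}$ together with the initial condition $u(0,\cdot)=0$ to discard the boundary term at $t=0$ when integrating by parts in time against a test function not vanishing there. Everything else is bookkeeping with the exponential weight and with the finiteness of $T$.
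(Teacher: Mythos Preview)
Your proof is correct and follows essentially the same approach as the paper: zero-extend $u$ to $(-\infty,T)\times\Omega$, apply the exponential substitution $v=e^{-\lambda t}\bar u$ with $\lambda>\lambda_0$ to invoke Theorem~\ref{conormal -infty to T}, and for existence reverse the process, using the uniqueness part of Theorem~\ref{conormal -infty to T} on $(-\infty,0)\times\Omega$ to verify the zero initial condition. The paper presents these steps slightly more tersely, but the argument is the same.
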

\begin{comment}
Let $\Omega_T= (-\infty,T)\times \Omega$. 
\begin{theorem}
    %\label{ original conormal}
    Let $T\in (-\infty,\infty]$, $p\in(1,\infty)$, $g\in (L_p(\Omega_T))^d$, and $f\in L_p(\Omega_T)$. Then, there exist positive constants $\gamma=\gamma(d,\delta,p)$, and $N=N(d,\delta,p)$ such that, under Assumption \ref{assum rei coeffi} $(\gamma,R_0)$, for any $u\in \cH_p^{1}(\Omega_T)$ satisfying 
        \begin{equation*}
        u_t-D_i(a_{ij}D_j u)+\lambda u=D_ig_i+f \quad \text{in} \quad \Omega_T,
    \end{equation*}
    with the conormal derivative boundary conditions on $(-\infty,T)\times \partial\Omega$,
        we have 
    \begin{equation*}       \|Du\|_{L_p(\Omega_T)}+\sqrt{\lambda}\|u\|_{L_p(\Omega_T)}\le N\|g\|_{L_p(\Omega_T)}+\frac{N}{\sqrt{\lambda}}\|f\|_{L_p(\Omega_T)}
        \end{equation*}
provided that $\lambda \ge \lambda_0$, where $\lambda_0=\lambda_0(d,\delta,p,R_0)$.
Moreover, for $\lambda\ge \lambda_0$, there exists a unique solution $u\in \cH_p^{1}(\Omega_T)$ satisfying  \eqref{main eq} with the conormal derivative boundary conditions on $(-\infty,T)\times \partial\Omega$.
\end{theorem} 
\end{comment}
\begin{remark} 
In Theorems \ref{conormal entire time}, \ref{conormal -infty to T}, and Corollary \ref{conormal bounded interval}, 
the weight $\omega_1$ is defined on $\mathbb{R}^d$, rather than on $\Omega$. 
The class $A_p(\Omega)$ consists of all nonnegative functions $\omega(x)$ on $\Omega$ such that
\[
[\omega]_{A_p(\Omega)} := \sup_{x\in \Omega,r>0} \left( \dashint_{\Omega_r(x)} \omega \, dx \right) \left( \dashint_{\Omega_r(x)} \omega^{-\frac{1}{p-1}} \, dx \right)^{p-1} < \infty.
\]
We also define 
\[
[\omega]_{A_{p,\Omega}} :=
\sup_{x\in \bR^d,r>0} \left( \frac{1}{|B_r|}\int_{\Omega_r(x)} \omega \, dx \right) \left( \frac{1}{|B_r|}\int_{\Omega_r(x)} \omega^{-\frac{1}{p-1}} \, dx \right)^{p-1}.
\]
Note that 
\begin{equation}
    \label{eq0911_01}
    [\omega]_{A_{p,\Omega}}\le 2^{dp} [\omega]_{A_p(\Omega)}.
\end{equation}
Indeed, for any $x\in \bR^d$ and $r>0$ with $\Omega_r(x)\neq \emptyset$, we choose $y \in \Omega_r(x)$. Then $\Omega_r(x)\subset \Omega_{2r}(y)$, and hence 
\begin{align*}
 \left( \frac{1}{|B_r|}\int_{\Omega_r(x)} \omega \, dx \right)& \left( \frac{1}{|B_r|}\int_{\Omega_r(x)} \omega^{-\frac{1}{p-1}} \, dx \right)^{p-1}
\\    
&\le \left( \frac{1}{|B_r|}\int_{\Omega_{2r}(y)} \omega \, dx \right) \left( \frac{1}{|B_r|}\int_{\Omega_{2r}(y)} \omega^{-\frac{1}{p-1}} \, dx \right)^{p-1} 
\\&= 2^{dp}\left( \frac{1}{|B_{2r}|}\int_{\Omega_{2r}(y)} \omega \, dx \right) \left( \frac{1}{|B_{2r}|}\int_{\Omega_{2r}(y)} \omega^{-\frac{1}{p-1}} \, dx \right)^{p-1}
\\&\le 2^{dp}[w]_{A_p(\Omega)}.
\end{align*}
It is well known that if $\omega$ is a nonnegative function on $\Omega$ such that 
$[\omega^{1+\varepsilon}]_{A_{p,\Omega}}<\infty$ for some $\varepsilon>0$, 
then $\omega$ can be extended to a weight $\bar \omega \in A_p(\mathbb{R}^d)$; see, for instance, \cite{MR807149, MR4340793}. 

If $\Omega$ satisfies the doubling property, i.e., there exists a constant $K_1>0$ such that for any $x \in \Omega$ and $r>0$,
\[
0 < |\Omega_{2r}(x)| \leq K_1 |\Omega_r(x)| < \infty,
\]
then $\Omega$, endowed with the Euclidean distance and Lebesgue measure, becomes a homogeneous space. 
In this case, for any $\omega \in A_p(\Omega)$, by the reverse H\"older inequality, there exists $\varepsilon>0$ such that $\omega^{1+\varepsilon}\in A_p(\Omega)$. 
By \eqref{eq0911_01}, we then obtain $[\omega^{1+\varepsilon}]_{A_{p,\Omega}}<\infty$, so that $\omega$ admits an extension. 
Therefore, if we further assume that $\Omega$ satisfies the doubling property, the conclusions of Theorems \ref{conormal entire time}, \ref{conormal -infty to T}, 
and Corollary \ref{conormal bounded interval} remain valid with $A_p(\mathbb{R}^d)$ 
replaced by $A_p(\Omega)$.
For the same reason, the statement of Corollary \ref{conormal bounded interval} also holds for 
$\omega_2 \in A_q((0,T))$ instead of $A_q(\bR)$.
\end{remark}
\begin{remark}
The main results also extend to systems, since our arguments remain valid in the corresponding setting. 
For the sake of convenience, however, we restrict our presentation to the case of equations. In the above theorems and corollary, one may also consider equations with lower-order terms, as in \eqref{def_eq} and \eqref{eq0819_01}, under appropriate conditions on $b_i$ and $c$, such as boundedness.
For simplicity, in this paper, we restrict to the case $b_i = c = 0$.
\end{remark}

\section{Auxiliary results}\label{sec auxiliary}
In this section, we first prove Theorem \ref{conormal entire time} for $p=2$ with $w\equiv 1$, which is equivalent to establishing the $L_2$-solvability of the conormal derivative boundary value problem.
Note that, in the theorem below, no regularity assumptions are imposed on either the coefficients or the boundary of the domain.
\begin{theorem}
							\label{thm1126_1}
Let $\Omega$ be a domain in $\bR^d$, $\cQ=\bR\times \Omega$, and let $a_{ij}$ satisfy the conditions in $\eqref{eq0521_02}$.
There exists $N=N(d,\delta)$ such that, for any $\lambda\ge 0$,
\begin{equation*}
							%\label{}
\|D_t^{1/2}u\|_2 + \|Du\|_2 + \sqrt{\lambda}\|u\|_2 \leq N \left( \|h\|_2 + \|g_i\|_2 + \lambda^{-1/2}\|f\|_2\right),
\end{equation*}
where $\|\cdot\|_2 = \|\cdot\|_{L_2(\cQ)}$, provided that $u \in H_2^{1/2,1}(\cQ)$, $h, g_i, f \in L_2(\cQ)$ with $f \equiv 0$ if $\lambda = 0$, and 
\begin{equation}
							\label{eq1119_01}
u_t - D_i(a_{ij}D_ju) + \lambda u = D_t^{1/2}h+D_ig_i+f \quad \text{in} \,\, \cQ
\end{equation}
with the conormal derivative boundary condition on $\bR \times \partial \Omega$.
Furthermore, for any $\lambda>0$, $h, g_i, f \in L_2(\cQ)$, there exists a unique solution $u\in H_2^{1/2,1}(\cQ)$ to the equation \eqref{eq1119_01} with the conormal derivative boundary condition on $\bR \times \partial\Omega$.
\end{theorem}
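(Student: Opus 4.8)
\emph{Proof proposal.} The plan is to work throughout inside the Hilbert space $H_2^{1/2,1}(\cQ)$: for $p=q=2$ and $\omega\equiv 1$ the admissible test functions in Definition \ref{def0814_2} are exactly the elements of $H_2^{1/2,1}(\cQ)$, so (since $\Omega$ is arbitrary) no density of smooth functions up to the boundary is needed, and none is available. Write $\mathcal B[u,\varphi]$ and $\mathcal F(\varphi)$ for the two sides of \eqref{eq0523_04} with $b_i=c=0$, so that $u$ solves \eqref{eq1119_01} with the conormal condition iff $u\in H_2^{1/2,1}(\cQ)$ and $\mathcal B[u,\varphi]=\mathcal F(\varphi)$ for every $\varphi\in H_2^{1/2,1}(\cQ)$. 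All facts I use about $H$ and $D_t^{1/2}$ concern only the $t$--variable and follow slicewise in $x$ from one--dimensional Fourier analysis (cf.\ \cite{MR4920684}): $H$ is a skew--symmetric isometry of $L_2(\cQ)$ with $H^2=-I$ that commutes with $D_i$ and with $D_t^{1/2}$ on $H_2^{1/2,1}(\cQ)$; thus $\int_\cQ H(v)\,v\,dX=0$, $\|H(v)\|_2=\|v\|_2$, $D_iH(u)=H(D_iu)$, $D_t^{1/2}H(u)=H(D_t^{1/2}u)$, and $H$ is bounded on $H_2^{1/2,1}(\cQ)$. The difficulty is already visible here: the form $\mathcal B$ is \emph{not} coercive, since $\int_\cQ H(D_t^{1/2}u)\,D_t^{1/2}u\,dX=0$ forces $\mathcal B[u,u]$ to control only $\|Du\|_2$ and $\sqrt\lambda\|u\|_2$ and never $\|D_t^{1/2}u\|_2$. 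The remedy, used in both the estimate and the solvability, is to pair $u$ with $H(u)$: because $H(D_t^{1/2}H(u))=-D_t^{1/2}u$, this turns the skew leading part into a positive definite one, producing an error that is only of the type $\delta^{-1}\|Du\|_2^2$.

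For the a priori estimate I test $\mathcal B[u,\varphi]=\mathcal F(\varphi)$ first with $\varphi=u$: the leading term drops by skew--symmetry and \eqref{eq0521_02} gives $\delta\|Du\|_2^2+\lambda\|u\|_2^2\le\|h\|_2\|D_t^{1/2}u\|_2+\|g_i\|_2\|Du\|_2+\|f\|_2\|u\|_2$ (with $f\equiv 0$ if $\lambda=0$). Then with $\varphi=H(u)$: using $D_t^{1/2}H(u)=H(D_t^{1/2}u)$ and the $L_2$--isometry the leading term becomes $-\|D_t^{1/2}u\|_2^2$, the term $\lambda\int_\cQ uH(u)\,dX$ vanishes, and $|\int_\cQ a_{ij}D_juD_iH(u)\,dX|\le\delta^{-1}\|Du\|_2^2$, so $\|D_t^{1/2}u\|_2^2\le\delta^{-1}\|Du\|_2^2+\|h\|_2\|D_t^{1/2}u\|_2+\|g_i\|_2\|Du\|_2+\|f\|_2\|u\|_2$. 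Abbreviating $\alpha=\|D_t^{1/2}u\|_2$, $\beta=\|Du\|_2$, $\gamma=\sqrt\lambda\|u\|_2$, $R=\|h\|_2+\|g_i\|_2+\lambda^{-1/2}\|f\|_2$, these are $\delta\beta^2+\gamma^2\le R(\alpha+\beta+\gamma)$ and $\alpha^2\le\delta^{-1}\beta^2+R(\alpha+\beta+\gamma)$; multiplying the first by a constant $L=L(\delta)$ with $L\delta-\delta^{-1}\ge 1$ and adding gives $\alpha^2+\beta^2+\gamma^2\le N(\delta)R(\alpha+\beta+\gamma)$, hence $\alpha+\beta+\gamma\le N(d,\delta)R$. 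This step is valid for all $\lambda\ge 0$.

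For existence and uniqueness when $\lambda>0$ I verify the two inf--sup (Babu\v{s}ka--Ne\v{c}as, or generalized Lax--Milgram) conditions for $\mathcal B$ on $H_2^{1/2,1}(\cQ)$. Boundedness of $\mathcal B$ and of $\mathcal F\in (H_2^{1/2,1}(\cQ))^*$ follows from Cauchy--Schwarz and $\|a_{ij}\|_\infty\le\delta^{-1}$. Fix $M\in(0,\delta^2)$. Testing with $\varphi=u-MH(u)$ and reusing the computations above, $\mathcal B[u,u-MH(u)]\ge(\delta-M\delta^{-1})\|Du\|_2^2+M\|D_t^{1/2}u\|_2^2+\lambda\|u\|_2^2$, while $\|u-MH(u)\|_{H_2^{1/2,1}}\le(1+M)\|u\|_{H_2^{1/2,1}}$; hence $\sup_{\varphi\ne 0}\mathcal B[u,\varphi]/\|\varphi\|_{H_2^{1/2,1}}\ge c_0\|u\|_{H_2^{1/2,1}}$ for some $c_0=c_0(d,\delta,\lambda)>0$, which in particular yields uniqueness. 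For the transpose condition, $H^2=-I$ gives $\mathcal B[H(\varphi),\varphi]=\|D_t^{1/2}\varphi\|_2^2+\int_\cQ a_{ij}D_jH(\varphi)D_i\varphi\,dX$, so testing with $u=\varphi+M'H(\varphi)$, $M'\in(0,\delta^2)$, gives $\sup_{u\ne 0}\mathcal B[u,\varphi]/\|u\|_{H_2^{1/2,1}}\ge c_0'\|\varphi\|_{H_2^{1/2,1}}$. The generalized Lax--Milgram theorem then produces a unique $u\in H_2^{1/2,1}(\cQ)$ with $\mathcal B[u,\cdot]=\mathcal F$; feeding this $u$ into the a priori estimate of the previous paragraph replaces the $\lambda$--dependent bound by the sharp one with $N=N(d,\delta)$.

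The main obstacle is precisely this non--coercivity in the $D_t^{1/2}u$ direction --- the reason there is no direct Lax--Milgram argument and, ultimately, why the half--time derivative had to enter at all. The whole argument rests on two things being compatible: the algebra of $H$ (namely $H^2=-I$, skew--symmetry, and commutation with $D$ and $D_t^{1/2}$), and the fact that the spatial error produced when one pairs with $H(u)$ is of the same form $\|Du\|_2^2$ that the genuine coercivity of $\mathcal B$ controls with the better constant $\delta$, so that choosing the auxiliary parameter below $\delta^2$ lets it be absorbed. A secondary care point, owing to $a_{ij}$ being merely bounded measurable and $\Omega$ arbitrary: every manipulation must pass through the weak formulation \eqref{eq0523_04} and the slicewise one--dimensional identities, since no integration by parts in $t$ or $x$ beyond what \eqref{eq0523_04} encodes is available.
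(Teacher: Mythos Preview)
Your proposal is correct and follows essentially the same approach that the paper indicates: the paper's own proof simply cites \cite[Theorem 4.1]{MR4920684}, noting that it is ``based on the Lax--Milgram theorem'' and that the key identity $\int_\cQ H(D_t^{1/2}u)\,D_t^{1/2}u\,dX=0$ comes from the Fourier transform. Your argument is precisely a worked-out version of that reference---testing with $\varphi=u$ and $\varphi=H(u)$ for the a priori estimate, and with $\varphi=u-MH(u)$ (and the transposed choice) to verify the Babu\v{s}ka--Ne\v{c}as inf--sup conditions---so the two proofs coincide in substance.
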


\begin{proof}
The proof is identical to that of \cite[Theorem 4.1]{MR4920684}, which is based on the Lax-Milgram theorem.
%Note that \cite[Theorem 4.1]{MR4920684} concerns the Dirichlet boundary problem, where the solution space is $\mathring{H}_2^{1/2,1}(\cQ)$, so the proof uses the denseness of $C_0^\infty(\cQ)$ in $\mathring{H}_2^{1/2,1}(\cQ)$. Nevertheless, the same line of proof applies for $u \in H_2^{1/2,1}(\cQ)$ instead of $u \in C_0^\infty(\cQ)$.
In particular, to check 
\[
\int_\cQ H(D_t^{1/2}u) \, D_t^{1/2}u \, dX = 0
\]
for $u \in H_2^{1/2,1}(\cQ)$, one may use the Fourier transform. See, for instance, \cite[Lemma 3.4, Lemma 3.6(4)]{MR4920684}.
\end{proof}

Next, we establish the unique solvability of equations in $L_p$ spaces, $1<p<\infty$, when the domain is $\bR \times \bR^d$ and the coefficients $a_{ij}$ satisfy the following assumption (as well as \eqref{eq0521_02}):

\begin{assumption}
							\label{assum0103_1}
$a_{ij}$ satisfy either (i) or (ii) of the following:
\begin{enumerate}
\item[(i)] $a_{11} = a_{11}(t)$, $a_{ij} = a_{ij}(t,x_1)$ for $(i,j) \neq (1,1)$.
\item[(ii)] $a_{11} = a_{11}(x_1)$, $a_{ij} = a_{ij}(t,x_1)$ for $(i,j) \neq (1,1)$.
\end{enumerate}
\end{assumption}

Note that the coefficients in the assumption above depend only on $t$, $x_1$, or $(t,x_1)$ and possess no regularity.

\begin{proposition}
							\label{prop0105_1}
Let $p \in (1,\infty)$, $\lambda \geq 0$, $\cQ = \bR \times \bR^d$, and $h, g_i, f \in L_p(\cQ)$ with $f \equiv 0$ if $\lambda = 0$.
Assume that $a_{ij}$ satisfy Assumption \ref{assum0103_1}.
Then, there exists $N = N(d,\delta,p)$ such that, for any $u \in H_p^{1/2,1}(\cQ)$ satisfying
\begin{equation}
							\label{eq0103_01}
u_t - D_i(a_{ij}D_ju) + \lambda u = D_t^{1/2}h + D_i g_i + f \quad \text{in} \,\, \cQ,
\end{equation}
we have
\begin{equation}
							\label{eq0105_01}
\|D_t^{1/2}u\|_p + \|Du\|_p + \sqrt{\lambda}\|u\|_p \leq N \left(\|h\|_p + \|g_i\|_p + \lambda^{-1/2}\|f\|_2\right),
\end{equation}
where $\|\cdot\|_p = \|\cdot\|_{L_p(\cQ)}$.
Moreover, for $\lambda > 0$, there exists a unique solution $u \in H_p^{1/2,1}(\cQ)$ satisfying \eqref{eq0103_01}.
\end{proposition}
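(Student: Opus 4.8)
The plan is to prove Proposition~\ref{prop0105_1} in two stages, following the classical scheme of first establishing the \emph{a priori} estimate \eqref{eq0105_01} and then upgrading it to unique solvability by the method of continuity together with the $L_2$-result (Theorem~\ref{thm1126_1} with $\Omega=\bR^d$, where there is no boundary). For the a priori estimate, I would start from the simplest case where all coefficients depend only on $t$: then one can diagonalize in $t$ via the Fourier transform in time (or equivalently freeze $t$ and treat the resulting family of constant-coefficient second-order operators in $x$), and the symbol $\lambda + i\tau + a_{ij}(t)\xi_i\xi_j$ is, by \eqref{eq0521_02}, comparable to $\lambda + |\tau| + |\xi|^2$. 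This gives multiplier bounds for the maps $(h,g,f)\mapsto D_t^{1/2}u$, $Du$, $\sqrt\lambda\, u$; since the $a_{ij}(t)$ are merely measurable in $t$, the Fourier multiplier must be handled with respect to $x$ only, with $t$ as a parameter, and then the mixed-norm boundedness follows from the standard result that an operator bounded on $L_2$ and given by a ``nice enough'' parabolic kernel is bounded on $L_p$ — here one can instead simply cite the method of \cite{MR2304157} (pointwise sharp-function estimates plus Fefferman--Stein), or, more directly for this fully $x$-independent situation, the results already contained in \cite{MR4920684} whose Theorem~4.1 is quoted above.

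The real content is the dependence on $x_1$. Here the plan is to use the measurable-in-$(t,x_1)$ structure exactly as in \cite{MR2835999} (and \cite{MR4920684}): split the coefficient matrix into the ``$a_{11}$'' entry and the rest. Writing the equation in the form $u_t - D_1(a_{11}D_1 u) + \lambda u = D_t^{1/2}h + D_1 \tilde g_1 + D_{i'}g_{i'} + f$ where $\tilde g_1$ absorbs the off-diagonal terms $a_{1j}D_j u$ for $j\neq 1$ and similarly the $D_{i'}$ terms absorb $a_{ij}D_j u$, one isolates a \emph{one-dimensional-in-}$x_1$ operator $u_t - D_1(a_{11}D_1 u)+\lambda u$ whose coefficient depends only on $(t,x_1)$ (case (i)) or only on $x_1$ (case (ii)). For this operator one proves the estimate by a further freezing/Fourier argument: in case (i) Fourier-transform in $(t,x')$ and solve the resulting one-dimensional elliptic ODE in $x_1$ with measurable coefficient $a_{11}(t,x_1)$, using the energy identity to get the $L_2\to L_2$ bound and then the same $L_p$ machinery; in case (ii) the coefficient is $t$-independent so $u_t$ itself behaves better. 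The off-diagonal terms, having bounded coefficients, are then moved to the right-hand side as genuine $L_p$ data $g_i$ (for the $Du$-pieces) and absorbed by a standard bootstrap once $\lambda$ is large, or kept on the right with a small constant after interpolation. This is the step I expect to be the main obstacle: making the reduction to the one-$x_1$-variable operator rigorous at the level of the weak formulation involving $H(D_t^{1/2}u)$, and controlling the half-time derivative $D_t^{1/2}u$ — not just $Du$ — through the partial Fourier transform, since $D_t^{1/2}$ does not localize in $x$.

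Once the a priori estimate \eqref{eq0105_01} holds for all $\lambda>0$ (with the understanding that for $\lambda=0$ one takes $f\equiv 0$), uniqueness is immediate: the difference of two solutions satisfies the homogeneous equation and the estimate forces it to be zero. For existence, I would run the method of continuity along the segment $a_{ij}^{(\theta)} = (1-\theta)\delta_{ij} + \theta a_{ij}$, $\theta\in[0,1]$, noting that each $a^{(\theta)}$ still satisfies \eqref{eq0521_02} (with a possibly smaller ellipticity constant, uniform in $\theta$) and Assumption~\ref{assum0103_1} (the structure is preserved under convex combinations because $\delta_{ij}$ is constant). At $\theta=0$ the operator is the heat operator plus $\lambda$, for which solvability in $H_p^{1/2,1}$ is classical (again available from \cite{MR4920684}, or directly via the explicit heat-kernel/Fourier representation, checking the half-time derivative bound on the symbol level). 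The a priori bound, being uniform over $\theta\in[0,1]$ with $N$ independent of $\theta$, lets one propagate solvability from $\theta=0$ to $\theta=1$ in finitely many steps. This yields existence of $u\in H_p^{1/2,1}(\cQ)$ solving \eqref{eq0103_01} for every $\lambda>0$, completing the proof.
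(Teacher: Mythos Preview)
Your method-of-continuity closing argument matches the paper, and citing \cite{MR4920684} for the Laplacian endpoint is exactly right. But your route to the a priori estimate \eqref{eq0105_01} in the $x_1$-dependent case has a real gap, and the paper's argument is both different and much shorter.

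The paper never attempts to prove \eqref{eq0105_01} directly for the operator with coefficients $a_{ij}(t,x_1)$. Instead it decouples the two difficulties. Given a solution $u$, write $u=v+w$ where $v\in H_p^{1/2,1}(\cQ)$ solves the \emph{heat} equation $v_t-\Delta v+\lambda v=D_t^{1/2}h$; the full estimate \eqref{eq0105_03} for $v$ comes from \cite{MR4920684}, the case you already accept. Then $w=u-v$ satisfies a \emph{standard} divergence-form parabolic equation with no half-time derivative on the right,
\[
w_t-D_i(a_{ij}D_jw)+\lambda w=D_i\bigl(g_i-D_iv+a_{ij}D_jv\bigr)+f,
\]
and since Assumption~\ref{assum0103_1} is precisely the setting of \cite[Theorem~6.1(iii)]{MR2764911}, that result gives $\|Dw\|_p+\sqrt\lambda\,\|w\|_p$ in terms of the data and $\|Dv\|_p$. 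Summing controls $\|Du\|_p+\sqrt\lambda\,\|u\|_p$. Finally $\|D_t^{1/2}u\|_p$ is recovered by rewriting the original equation as $u_t-\Delta u+\lambda u=D_t^{1/2}h+D_i(g_i-D_iu+a_{ij}D_ju)+f$ and applying the Laplacian estimate once more. The half-time-derivative issue and the variable-coefficient issue are handled by two off-the-shelf results and never interact.

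Your proposed reduction, by contrast, keeps on the left only $u_t-D_1(a_{11}D_1u)+\lambda u$ and moves every other $a_{ij}D_ju$ to the right as data. But that operator is not elliptic in the full spatial variable: it controls at best $\|D_1u\|_p$ (and $\sqrt\lambda\,\|u\|_p$), never $\|D_{x'}u\|_p$, while the right-hand side now carries $N\|Du\|_p$ with $N$ of order $\delta^{-1}$. Taking $\lambda$ large shrinks only zero-order terms, not gradient terms, and there is no interpolation that makes the constant in front of $\|Du\|_p$ small here---so the ``standard bootstrap'' you invoke does not close. This is exactly the step you flag as the main obstacle; the paper's $u=v+w$ splitting sidesteps it entirely by delegating the variable-coefficient estimate to \cite{MR2764911}, where the correct (mean-oscillation, not naive absorption) argument has already been carried out for ordinary $\cH_p^1$ solutions.
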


\begin{proof}
Since the case $\lambda = 0$ follows by letting $\lambda \searrow 0$, we assume $\lambda > 0$.

We first consider the case where $a_{ij}=\delta_{ij}$, i.e., the Laplace operator.
In this case, by \cite[Theorems 2.7 or 2.9]{MR4920684}, the desired estimate and solvability in the proposition follow.
Indeed, the results in \cite[Theorems 2.7 or 2.9]{MR4920684} hold for $\lambda \geq 
\lambda_0(d,\delta,p)$, where $\lambda_0$ may not be zero.
For $0 < \lambda < \lambda_0(d,\delta,p)$, we use a scaling
\[
t\to (\lambda_0/\lambda)t,\quad x\to \sqrt{\lambda_0/\lambda}\, x
\]
to obtain the estimate \eqref{eq0105_01}.
For the solvability, see the proof of \cite[Theorem 2.7]{MR4920684}.

For general $a_{ij}$ as in Assumption \ref{assum0103_1}, due to the unique solvability of the equation \eqref{eq0103_01} with $a_{ij}=\delta_{ij}$ just proved above together with the method of continuity, it suffices to prove the estimate \eqref{eq0105_01}.
Let $u \in H_p^{1/2,1}(\cQ)$ satisfy \eqref{eq0103_01}.
Using the results proved above, we first find $v \in H_p^{1/2,1}(\cQ)$ satisfying
\[
v_t - \Delta v + \lambda v = D_t^{1/2}h \quad \text{in} \,\, \cQ
\]
with the estimate
\begin{equation}
							\label{eq0105_03}
\|D_t^{1/2}v\|_p + \|D v\|_p + \sqrt{\lambda}\|v\|_p \leq N \|h\|_p,
\end{equation}
where $N = N(d,\delta,p)$.
Set $w = u - v \in H_p^{1/2,1}(\cQ)$, which satisfies
\begin{equation}
							\label{eq0105_02}
w_t - D_i(a_{ij}D_j w) + \lambda w = D_i( g_i - D_i v + a_{ij}D_jv) + f \quad \text{in} \,\, \cQ.
\end{equation}
Since $w, Dw \in L_p(\cQ)$ and, from \eqref{eq0105_02}, $w_t$ can be written as $w_t = D_i G_i + F$, where $G_i, F \in L_p(\cQ)$ (see Remark \ref{rem0902_2}), it follows by the definition of $\cH_p^1(\cQ)$ that $w \in \cH_p^1(\cQ)$.
On the other hand, for the equation \eqref{eq0105_02}, whose coefficients satisfy Assumption \ref{assum0103_1}, by \cite[Theorem 6.1 (iii)]{MR2764911}, we have the unique solvability in $\cH_p^1(\cQ)$.
Hence, $w \in \cH_p^1(\cQ) \cap H_p^{1/2,1}(\cQ)$ is the unique solution to \eqref{eq0105_02}
with the estimate
\begin{equation}
							\label{eq0105_04}
\|Dw\|_p + \sqrt{\lambda}\|w\|_p \leq N \left( \|g_i\|_p + \|Dv\|_p +  \lambda^{-1/2}\|f\|_p \right),
\end{equation}
where $N = N(d,\delta,p)$.

Combining the estimates \eqref{eq0105_03} and \eqref{eq0105_04}, along with $u=w+v$, we obtain
\begin{equation}
							\label{eq0105_05}
\|Du\|_p + \sqrt{\lambda}\|u\|_p \leq N \left(\|h\|_p + \|g_i\|_p + \lambda^{-1/2}\|f\|_p\right),
\end{equation}
where $N = N(d,\delta,p)$.
To estimate $D_t^{1/2}u$, we rewrite the equation \eqref{eq0103_01} as
\[
u_t - \Delta u + \lambda u = D_t^{1/2}h + D_i(g_i - D_i u + a_{ij}D_ju) + f \quad \text{in} \,\, \cQ
\]
and combine the estimate proved for $a_{ij}=\delta_{ij}$ and the estimate \eqref{eq0105_05}.
The proposition is thus proved.
\end{proof}

We now present a version of Proposition \ref{prop0105_1} for the case where the spatial domain is either the half space $\bR^d_+$ or a partially bounded domain:
\[
\Pi_R = \{x=(x_1,x') \in \bR^d: 0<x_1<R, \, x' \in \bR^{d-1}\} = (0,R) \times \bR^{d-1} \subset \bR^d.
\]
Due to the presence of the boundary of the spatial domain, we impose a boundary condition on the lateral boundary $\bR \times \partial\Omega$, where $\Omega = \bR^d_+$ or $\Omega = \Pi_R$.
The boundary condition we consider is either the zero Dirichlet boundary condition or the conormal derivative boundary condition on $\bR \times \partial\Omega$.
If $\Omega = \Pi_R$, we also consider the mixed boundary condition, i.e., the conormal derivative boundary condition on
 $\bR \times \{(0,x'): x' \in \bR^{d-1}\}$ and the zero Dirichlet boundary condition on $\bR \times \{(R,x'): x' \in \bR^{d-1}\}$.
In this case, as mentioned in Remark \ref{rem1213_1}, the notion of solutions is defined as in Definition \ref{def0814_2} with test functions $\varphi \in H_q^{1/2,1}(\bR \times \Pi_R)$ satisfying $
\varphi(t,R,x') = 0$ for $(t,x') \in \bR \times \bR^{d-1}$.

\begin{proposition}
							\label{prop1211_1}
Let $p \in (1,\infty)$, $R > 0$, $\lambda \geq 0$, $\Omega = \bR^d_+$ or $\Omega = \Pi_R$, $\cQ = \bR \times \Omega$, and $h, g_i, f \in L_p(\cQ)$ with $f \equiv 0$ if $\lambda = 0$.
Assume that $a_{ij}$ satisfy Assumption \ref{assum0103_1}.

\begin{enumerate}
\item[(i)] For $\Omega = \bR^d_+$, there exists $N = N(d,\delta,p)$ such that, for any $u \in H_p^{1/2,1}(\cQ)$ satisfying \eqref{eq0103_01} with either the zero Dirichlet boundary condition or the conormal derivative boundary condition on $\bR \times \partial\Omega$, we have \eqref{eq0105_01}.
Moreover, for $\lambda > 0$, there exists a unique solution $u \in H_p^{1/2,1}(\cQ)$ to \eqref{eq0103_01} with either the zero Dirichlet boundary condition or the conormal derivative boundary condition on $\bR \times \partial\Omega$.

\item[(ii)] For $\Omega = \Pi_R$, we have the same estimate and solvability result as above with the conormal derivative boundary condition on $\bR \times \{(0,x'): x' \in \bR^{d-1}\}$ and the zero Dirichlet boundary condition on $\bR \times \{(R,x'): x' \in \bR^{d-1}\}$.
\end{enumerate}
\end{proposition}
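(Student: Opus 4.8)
The plan is to deduce part (i) from Proposition \ref{prop0105_1} by a reflection across $\{x_1=0\}$, and to obtain part (ii) by rerunning the proof of Proposition \ref{prop0105_1} on the slab, since a single reflection does not close a slab up to $\bR\times\bR^d$.

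\textbf{Half space.} Given $u\in H_p^{1/2,1}(\cQ)$ solving \eqref{eq0103_01} with the conormal (resp.\ zero Dirichlet) condition on $\bR\times\partial\bR^d_+$, I would take $\bar u$ to be the even (resp.\ odd) extension of $u$ in $x_1$, set $\bar a_{ij}(t,x)=s_is_j\,a_{ij}(t,-x_1,x')$ for $x_1<0$ (and $=a_{ij}$ for $x_1>0$) with $s=(1,-1,\dots,-1)$, and extend $h,f$ with the parity of $\bar u$, $g_1$ with the opposite parity, and $g_j$ $(j\ge2)$ with the parity of $\bar u$. The crucial observation is that these reflected coefficients still satisfy Assumption \ref{assum0103_1} — they depend only on $(t,x_1)$, with $\bar a_{11}$ still depending only on $t$ in case (i) or only on $x_1$ in case (ii) — and still satisfy \eqref{eq0521_02} with the same $\delta$, because the reflection is orthogonal. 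One then checks, using Definition \ref{def0814_2} and symmetrizing test functions (for Dirichlet, the symmetrized test function is odd, hence vanishes on $\{x_1=0\}$, which is also why $\bar u\in H_p^{1/2,1}(\bR\times\bR^d)$), that $\bar u$ solves \eqref{eq0103_01} on $\bR\times\bR^d$ with the extended data, and then applies Proposition \ref{prop0105_1}. Since all of these extensions change $L_p$-norms by the fixed factor $2^{1/p}$, estimate \eqref{eq0105_01} for $u$ follows. For solvability, I would extend the data, solve on $\bR\times\bR^d$ via Proposition \ref{prop0105_1}, note that by uniqueness and the symmetry of the extended problem the solution has the parity $\varepsilon$ of $\bar u$, hence restricts to a solution on $\bR^d_+$ with the correct boundary condition (for Dirichlet, $\varepsilon=-1$ forces the restriction to vanish on $\{x_1=0\}$); uniqueness on $\bR^d_+$ follows by extending a homogeneous solution.

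\textbf{Slab.} For $\Pi_R$ with the mixed boundary condition I would copy the two-step argument in the proof of Proposition \ref{prop0105_1}: first split off $v$ solving $v_t-\Delta v+\lambda v=D_t^{1/2}h$ with the same mixed boundary condition, for which estimate \eqref{eq0105_03} and solvability are the slab analogue of \cite[Theorems 2.7 or 2.9]{MR4920684} (if not stated there directly, one reduces to it by an even reflection across $\{x_1=0\}$ followed by a $4R$-periodic odd extension, which turns the problem into a periodic-in-$x_1$ heat equation with half-time derivative, treated exactly as in the whole-space case); then $w=u-v$ solves a divergence-form parabolic equation with coefficients satisfying Assumption \ref{assum0103_1} and the mixed boundary condition, whose $\cH_p^1$-solvability and estimate \eqref{eq0105_04} are provided by the corresponding result for standard parabolic equations on $\Pi_R$, e.g.\ \cite[Theorem 6.1]{MR2764911}. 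Combining as in \eqref{eq0105_03}--\eqref{eq0105_05}, and then rewriting the equation with $-\Delta$ on the left to recover the $D_t^{1/2}u$ bound, yields \eqref{eq0105_01}; the method of continuity then gives solvability.

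\textbf{Main difficulty.} The differentiations, parity matching, and norm computations are routine and I would not carry them out. The one point that genuinely needs care is verifying that the reflected leading coefficients retain the exact structure of Assumption \ref{assum0103_1} (measurable in $(t,x_1)$ only, with $a_{11}$ of the prescribed form) together with \eqref{eq0521_02}, and that every datum is reflected with the parity forced by \eqref{eq0103_01}, so that $\bar u$ is a genuine weak solution in the sense of Definition \ref{def0814_2} on the enlarged domain. For the slab, the corresponding subtlety is that the mixed boundary condition is asymmetric, so one must make sure the auxiliary half-time-derivative heat solvability holds with conormal data on one face and Dirichlet data on the other — this is the exact counterpart of the step where Proposition \ref{prop0105_1} invokes \cite{MR4920684}, and once it is in place the rest is formal.
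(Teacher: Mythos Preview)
Your treatment of part (i) is essentially the paper's own argument: the paper also reduces to Proposition \ref{prop0105_1} by the even/odd reflection across $\{x_1=0\}$, writes out exactly the sign pattern $\bar a_{ij}=s_is_j\,a_{ij}(\cdot,-x_1)$ you describe, and records the same verification that Assumption \ref{assum0103_1} and \eqref{eq0521_02} survive.

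For part (ii) your route diverges from the paper's. The paper does \emph{not} rerun the $v+w$ splitting of Proposition \ref{prop0105_1} on the slab. Instead it first takes the even reflection across $\{x_1=0\}$ to convert the mixed problem on $\Pi_R$ into a pure Dirichlet problem on $\Xi_R=(-R,R)\times\bR^{d-1}$, and then handles the \emph{full} equation (with the $D_t^{1/2}h$ term intact) by a periodic extension in $x_1$ plus cut-offs, following the scheme of \cite[Proposition 6.2]{MR4345837} and feeding directly into Proposition \ref{prop0105_1} on $\bR\times\bR^d$; existence is bootstrapped from the $L_2$-theory of \cite[Theorem 4.1]{MR4920684}. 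Your approach is viable in principle, but it requires two separate slab inputs---the half-time-derivative heat problem with mixed boundary condition, and the standard divergence-form parabolic problem with mixed boundary condition---and the reference you give for the second, \cite[Theorem 6.1]{MR2764911}, is a whole-space (and, via reflection, half-space) result that does not cover $\Pi_R$ with mixed data. To justify that step you would have to carry out precisely the reflection-plus-periodic-extension argument that the paper applies once to the full equation. So the paper's organization is leaner: one reflection/periodization rather than two, and no need for an independent $\cH_p^1$ slab theory.
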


\begin{remark}
When $\Omega = \Pi_R$, one can also impose, instead of the mixed boundary condition described above, either the zero Dirichlet boundary condition or the conormal derivative boundary condition on $\bR \times \partial\Omega$.
In particular, for the conormal derivative boundary condition on $\bR \times \partial\Omega$, we rely on the $L_2$-solvability result (Theorem \ref{thm1126_1}); see the proof below.
For $L_2$-solvability under the zero Dirichlet boundary condition, see \cite[Theorem 4.1]{MR4920684}.
\end{remark}

\begin{proof}[Proof of Proposition \ref{prop1211_1}]
When $\Omega = \bR^d_+$, the results follow from Proposition \ref{prop0105_1} and the extension argument (for instance, see \cite[Theorem 6.1]{MR4345837} and the references therein).
Indeed, to employ the extension argument, we need the following extension of $a_{ij}$:
\[
\bar{a}_{ij} = \bar{a}_{ij} \quad \text{for} \quad (i,j) = (1,1) \quad \text{or} \quad i, j \geq 2,
\]
\[
\bar{a}_{1j} = \left\{
\begin{aligned}
a_{1j}(t,x_1), &\quad x_1 > 0,
\\
-a_{1j}(t,x_1), &\quad x_1 < 0,
\end{aligned}
\right.
\quad
\bar{a}_{j1} = \left\{
\begin{aligned}
a_{j1}(t,x_1), &\quad x_1 > 0,
\\
-a_{j1}(t,x_1), &\quad x_1 < 0,
\end{aligned}
\right.
\]
for $j=2,\ldots,d$.
Note that $\bar{a}_{ij}$ again satisfy Assumption \ref{assum0103_1} as well as the ellipticity condition \eqref{eq0521_02}.

%Since the case $\lambda = 0$ follows by letting $\lambda \searrow 0$, we assume $\lambda > 0$.

For $\Omega = \Pi_R$, again by the extension argument (more precisely, the even extension of $u$ with respect to $x_1$), it is enough to consider equations with coefficients as in Assumption \ref{assum0103_1} in $\Xi_R$ with the zero Dirichlet boundary condition on $\bR \times \partial\Xi_R$, where
\[
\Xi_R = \{x = (x_1,x') \in \bR^d: -R < x_1 < R, x' \in \bR^{d-1}\}.
\]
Then, we obtain the desired results by following the proof of \cite[Proposition 6.2]{MR4345837} and using Proposition \ref{prop0105_1}.
%Let $\bar{g}_1$ be the odd extension of $g$ with respect to $x_1$, and $\bar{h}$, $\bar{g}_i$, $i=2,\ldots,d$, and $\bar{f}$ be the even extensions of $h$, $g_i$, $i=2,\ldots,d$, and $f$ with respect to $x_1$. Using the extension argument with respect to $x_1$ (for instance, see \cite[Theorem 6.1]{MR4345837} and references therein), it suffices to consider the equation
%\[
%w_t - D_i(\bar{a}_{ij}D_j w) + \lambda w = D_t^{1/2}\bar{h} + D_i \bar{g}_i + \bar{f}
%\]
%in $\Xi_R$ with the zero Dirichlet boundary condition on $\bR \times \partial \Xi_R$, where
More precisely, we employ the periodic extensions of the equation with respect to $x_1$ to the whole space $\bR \times \bR^d$ and multiply it by appropriate cut-off functions to derive the desired results.
In particular, the proof of the existence of solutions begins with the $L_2$-solvability result established in \cite[Theorem 4.1]{MR4920684} for the zero Dirichlet boundary condition, where no regularity assumptions are needed for either $a_{ij}$ or the boundary of the spatial domain.
\end{proof}

From Proposition \ref{prop1211_1}, we derive the following local estimate.

\begin{lemma}
							\label{lem0106_1}
Let $p \in (1,\infty)$, $R \in (0,\infty)$, $\cQ = \bR \times \Pi_R$, and $a_{ij}$ satisfy Assumption \ref{assum0103_1}.
Then, there exits $N=N(d,\delta,p)$ such that, for any $\lambda \geq 0$,
\begin{align*}
&\left(2^{-1}R^{-2}\int_{-R^2}^{R^2} \int_{\Pi_R} |D_t^{1/2}u|^p + |Du|^p + \lambda^{p/2}|u|^p \, dx \, dt\right)^{1/p}\\
&\leq N \sum_{j=0}^\infty 2^{-j/4} \left(2^{-j-1}R^{-2} \int_{-2^j R^2}^{2^j R^2} \int_{\Pi_R} |h|^p  + |g_i|^p + \lambda^{-p/2}|f|^p \, dx \, dt\right)^{1/p},
\end{align*}
provided that $u \in H_p^{1/2,1}(\cQ)$, $h, g_i, f \in L_p(\cQ)$ with $f \equiv 0$ if $\lambda = 0$, and
\[
u_t - D_i(a_{ij}D_j u) + \lambda u = D_t^{1/2} h + D_i g_i + f \quad \text{in} \,\, \cQ
\]
with the conormal derivative boundary condition on $\bR \times \{(0,x'): x' \in \bR^{d-1}\}$ and the zero Dirichlet boundary condition on $\bR \times \{(R,x'): x' \in \bR^{d-1}\}$.
\end{lemma}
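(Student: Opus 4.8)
The plan is to deduce this local estimate from the global solvability result in Proposition \ref{prop1211_1}(ii) by a standard localization-and-iteration scheme. First I would reduce to the homogeneous case near the cylinder of radius $R$ by a cutoff-and-subtract argument: fix a smooth cutoff $\eta(t)$ in the time variable with $\eta \equiv 1$ on $(-R^2,R^2)$ and $\eta \equiv 0$ outside $(-2R^2,2R^2)$, and decompose the data on dyadic time-shells $(-2^{j}R^2,2^{j}R^2) \setminus (-2^{j-1}R^2,2^{j-1}R^2)$. For the piece of the data supported near the central cylinder (roughly $|t| \lesssim R^2$), apply Proposition \ref{prop1211_1}(ii) directly on $\cQ = \bR \times \Pi_R$ (with $\lambda$ in place to make the inhomogeneous case solvable, letting $\lambda \searrow 0$ at the end if needed), obtaining an $L_p(\cQ)$ bound on $D_t^{1/2}$, $D$, and $\sqrt{\lambda}\,u$ of that solution by the first dyadic term. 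The remaining difficulty is that $u$ itself is not supported near the center: what we control globally is a particular solution with localized data, and we must compare it to $u$.

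The key technical ingredient is therefore a decay estimate for homogeneous solutions: if $w \in H_p^{1/2,1}$ solves $w_t - D_i(a_{ij}D_jw) + \lambda w = 0$ in $\bR \times \Pi_R$ with the stated mixed boundary conditions, with data vanishing on a large time-interval, then the $L_p$ norm of $(D_t^{1/2}w, Dw, \sqrt\lambda w)$ over $(-R^2,R^2)\times \Pi_R$ is controlled by its norm over a much longer cylinder times a small factor. Such a gain comes from the parabolic scaling: the half-time derivative $D_t^{1/2}$ has scaling dimension making the natural gain per dyadic time-doubling a factor like $2^{-1/4}$, which is exactly the weight $2^{-j/4}$ appearing in the statement. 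Concretely, I would write $u = v + (u-v)$ where $v$ is the Proposition \ref{prop1211_1}(ii) solution with the full right-hand side but data truncated in time to $(-2R^2, 2R^2)$; then $u - v$ is a homogeneous solution on a large cylinder, and the gain of a power of $2$ per scale comes from the $H_p^{1/2,1}$-estimate applied after an appropriate rescaling $t \mapsto 2^j t$, $x \mapsto 2^{j/2} x$ (noting that the domain $\Pi_R$ is \emph{not} scale-invariant, so one must instead iterate over $\Pi_R$-cylinders of fixed spatial size but growing time-length, using that the equation on a time-dilated cylinder is again of the same type since $a_{ij}$ depends only on $(t,x_1)$ and the coefficient class and ellipticity are scale-invariant).

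More precisely, the iteration proceeds as follows. Let $\Phi(k)$ denote the left-hand side quantity computed on $(-2^k R^2, 2^k R^2)\times \Pi_R$ (suitably normalized), and let $\Psi(k)$ denote the corresponding data quantity on the same cylinder. Splitting off the data on the $(k{+}1)$-st shell, solving globally for that piece (which contributes $\lesssim 2^{-k/4}\Psi(k+1)$ to $\Phi(0)$ after accounting for the normalization factors $2^{-k-1}R^{-2}$ from the measure of the long cylinder versus the short one), and treating the rest as a homogeneous solution that loses a factor $2^{-1/4}$ when passing from scale $k{+}1$ down to scale $k$, one obtains a recursion of the shape $\Phi(0) \lesssim \sum_{k\ge 0} 2^{-k/4}\Psi(k+1)$, which is the claimed bound after reindexing. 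The constants $N$ depend only on $d,\delta,p$ because every invocation of Proposition \ref{prop1211_1} and every rescaling preserves these dependencies, and $\lambda$ enters only through the scale-invariant combinations $\sqrt\lambda\, u$ and $\lambda^{-1/2} f$.

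The main obstacle I anticipate is making the homogeneous-solution decay estimate rigorous with the correct exponent $1/4$: one must combine the global $H_p^{1/2,1}$-bound with a cutoff in time (which, unlike a spatial cutoff, interacts with the nonlocal operators $D_t^{1/2}$ and $H$), control the commutator $[D_t^{1/2},\eta]$ and $[H,\eta]$ applied to $u$, and verify that these commutator terms are themselves lower-order in the appropriate scaling so that they can be absorbed or summed. This is precisely the kind of half-time-derivative bookkeeping that the paper emphasizes as its novelty, so I expect the details to mirror commutator estimates for $D_t^{1/2}$ and the Hilbert transform in time already developed in the cited work \cite{MR4920684}; the rest of the argument is routine dyadic iteration.
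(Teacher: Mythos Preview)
Your proposal has the right high-level ingredients—time localization, commutator estimates for $D_t^{1/2}$ against smooth cutoffs, and a dyadic summation—but it differs from the paper's implementation in a way that leaves a real gap. The paper does \emph{not} decompose $u$ into a particular solution plus a homogeneous remainder; instead it multiplies $u$ directly by a time cutoff $\eta_k$ at each dyadic scale $k$, and—this is the step you are missing—adds an artificial zeroth-order term $\lambda_k = 2^{-k}$ to the equation for $u\eta_k$ before invoking Proposition~\ref{prop1211_1}(ii). That device is what produces the correct balance: $(\lambda+\lambda_k)^{-1/2}\le 2^{k/2}$ controls the commutator $h_k$ and the $f$-term, while $(2^{-k}+\lambda_k)(\lambda+\lambda_k)^{-1/2}\le 2\cdot 2^{-k/2}$ yields the small factor in front of $\|u\eta_{k+1}\|$. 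This leads to a genuine \emph{recursion} $A_k \le N\sum_{j>k}2^{-j/2}A_j + N 2^{k/2}\sum_{j>k}2^{-j/2}B_j$, which is then summed (choose $k_0$ large, multiply by $2^{-k/2}$, absorb, then induct down to $k=0$), with the Poincar\'e inequality on $\Pi_R$ (via the Dirichlet condition at $x_1=R$) used to close.

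Your proposed route—a Duhamel-type split and a ``decay estimate for homogeneous solutions''—would, if carried out, require exactly the same cutoff-plus-commutator machinery to justify the claimed $2^{-1/4}$ gain per scale, so it is not really an alternative. Moreover, your appeal to rescaling $t\mapsto 2^j t$, $x\mapsto 2^{j/2}x$ cannot work here (as you yourself note, $\Pi_R$ is not dilation-invariant), and ``letting $\lambda\searrow 0$ at the end'' does not explain how you control the undivided $f$-type terms $u(\eta)_t$ when $\lambda$ is small; that is precisely what the $\lambda_k$ trick handles. In short: the skeleton is right, but the $\lambda_k = 2^{-k}$ augmentation and the resulting self-referential recursion are the actual engine of the proof, and your outline does not supply them.
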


\begin{proof}
Thanks to scaling, we consider only the case $R = 1$.
Let $\lambda_k = 2^{-k}$ and 
$\eta_k$, $k=0,1,2,\ldots$, be a $C^\infty(\bR)$ function such that $0 \leq \eta_k \leq 1$,
\[
\eta_k(t) = \left\{
\begin{aligned}
1 \quad & \text{for} \,\, t \in (-2^k,2^k),
\\
0 \quad & \text{for} \,\, t \in \bR \setminus (-2^{k+1},2^{k+1}),
\end{aligned}
\right.
\]
and $|\partial_t \eta_k| \leq N 2^{-k}$, where $N$ is a constant independent of $k$.
Then, $u \eta_k$ satisfies
\begin{multline}
							\label{eq1211_01}
(u \eta_k)_t - D_i\left( a_{ij} D_j (u\eta_k) \right) + (\lambda + \lambda_k) u \eta_k 
\\
= D_t^{1/2}(h \eta_k) + D_i(g \eta_k) + f \eta_k + h_k + u (\eta_k)_t + \lambda_k u \eta_k
\end{multline}
in $\cQ$ with the conormal derivative condition on $\bR \times \{(0,x'): x' \in \bR^{d-1}\}$ and the zero Dirichlet boundary condition on $\bR \times \{(1,x'): x' \in \bR^{d-1}\}$, where
\begin{multline}
							\label{eq1211_02}
h_k(t,x) = \eta_k D_t^{1/2}h - D_t^{1/2}(\eta_k h)
\\
= - \frac{1}{\sqrt{8 \pi}} \int_\bR h(t+\ell,x)\frac{\eta_k(t+\ell)- \eta_k(t)}{|\ell|^{3/2}} \, d\ell.
\end{multline}
Note that the last two terms in \eqref{eq1211_01} can be rewritten as
\[
u(\eta_k)_t + \lambda_k u \eta_k = u (\eta_k)_t \eta_{k+1} + \lambda_k u \eta_k \eta_{k+1} \quad \text{in}\,\, \cQ.
\]
Then, by Proposition \ref{prop1211_1} applied to \eqref{eq1211_01} along this observation, we have
\begin{multline}
							\label{eq0106_01}
\|D_t^{1/2}(u\eta_k)\|_p + \|D(u\eta_k)\|_p + \sqrt{\lambda + \lambda_k}\|u \eta_k\|_p 
\\
\leq N \|h\eta_k\|_p + N\|g\eta_k\|_p + \frac{N}{\sqrt{\lambda+\lambda_k}}\|F_k\|_p,
\end{multline}
where $\|\cdot\|_p = \|\cdot\|_{L_p(\cQ)}$, $N = N(d,\delta,p)$, and
\[
F_k = f \eta_k + h_k + u(\eta_k)_t \eta_{k+1} + \lambda_k u \eta_k \eta_{k+1}.
\]
Note that
\begin{equation}
							\label{eq0106_02}
\frac{1}{\sqrt{\lambda + \lambda_k}}\|F_k\|_p \leq \frac{1}{\sqrt{\lambda}}\|f \eta_k\|_p + \frac{1}{\sqrt{\lambda + \lambda_k}}\|h_k\|_p + N\frac{2^{-k}+\lambda_k}{\sqrt{\lambda+\lambda_k}}\|u \eta_{k+1}\|_p,
\end{equation}
where, by the choice of $\lambda_k$,
\[
\frac{1}{\sqrt{\lambda + \lambda_k}} \leq 2^{k/2}, 
\quad \frac{2^{-k}+\lambda_k}{\sqrt{\lambda+\lambda_k}} \leq 2 \cdot 2^{-k/2}.
\]
From this, \eqref{eq0106_02}, and \eqref{eq0106_01}, it follows that
\begin{align*}
&\|D_t^{1/2}(u\eta_k)\|_p + \|D(u\eta_k)\|_p + \sqrt{\lambda}\|u \eta_k\|_p\\
&\leq \|D_t^{1/2}(u\eta_k)\|_p + \|D(u\eta_k)\|_p + \sqrt{\lambda + \lambda_k}\|u \eta_k\|_p \\
&\leq N \|h \eta_k\|_p + N\|g\eta_k\|_p + \frac{N}{\sqrt{\lambda}}\|f \eta_k\|_p + N 2^{k/2}\|h_k\|_p + N 2^{-k/2}\|u \eta_{k+1}\|_p.
\end{align*}
As $h_k$ in \eqref{eq1211_02}, we have
\[
\eta_k D_t^{1/2}u = D_t^{1/2}(u\eta_k) - u_k,
\]
where
\[
u_k = \frac{1}{\sqrt{8\pi}} \int_\bR u(t+\ell,x)\frac{\eta_k(t+\ell)-\eta_k(t)}{|\ell|^{3/2}} \, d\ell.
\]
From this and the inequalities above, we obtain that
\begin{multline}
							\label{eq1211_03}
\|\eta_k D_t^{1/2}u\|_p + \|\eta_k Du\|_p + \sqrt{\lambda}\|\eta_k u\|_p
\leq \|u_k\|_p + N \|h \eta_k\|_p + N \|g\eta_k\|_p
\\
+ \frac{N}{\sqrt{\lambda}}\|f\eta_k\|_p + N 2^{k/2}\|h_k\|_p + N 2^{-k/2}\|u \eta_{k+1}\|_p.
\end{multline}
From \cite[Lemma 3.3]{MR4920684}, we see that
\begin{align*}
&\|u_k\|_p \leq N 2^{-k/2}\sum_{j=1}^\infty 2^{-j(1/2+1/p)}\|u\|_{L_p\left( (-2^{k+j}, 2^{k+j}) \times \Pi_1 \right)},\\
&\|h_k\|_p \leq N 2^{-k/2}\sum_{j=1}^\infty 2^{-j(1/2+1/p)}\|h\|_{L_p\left( (-2^{k+j}, 2^{k+j}) \times \Pi_1 \right)},
\end{align*}
where $N = N(p)$.
Combining the above two inequalities with \eqref{eq1211_03} and recalling the properties of $\eta_k$, we have
\begin{multline}
							\label{eq1213_01}
\|D_t^{1/2}u\|_{L_p\left((-2^k,2^k) \times \Pi_1\right)} + \|Du\|_{L_p\left((-2^k,2^k) \times \Pi_1\right)} + \sqrt{\lambda}\|u\|_{L_p\left((-2^k,2^k) \times \Pi_1\right)}
\\
\leq
%\frac{N}{\sqrt{\lambda}}\|f \|_{L_p\left((-2^{k+1},2^{k+1}) \times \Pi_1\right)} +
N 2^{-k/2}\sum_{j=1}^\infty 2^{-j(1/2+1/p)}\|u\|_{L_p\left( (-2^{k+j}, 2^{k+j}) \times \Pi_1 \right)}
\\
%+ N\|g\|_{L_p\left((-2^{k+1},2^{k+1}) \times \Pi_1\right)}
+ N \sum_{j=1}^\infty 2^{-j(1/2+1/p)}\||h| + |g_i| + \lambda^{-1/2}|f| \|_{L_p\left( (-2^{k+j}, 2^{k+j}) \times \Pi_1 \right)},
\end{multline}
where by the Poincar\'{e} inequality with the fact that $u(t,1,x')=0$ for $(t,x') \in \bR \times \bR^{d-1}$,
\begin{equation}
							\label{eq1213_03}
\|u\|_{L_p\left( (-2^{k+j}, 2^{k+j}) \times \Pi_1 \right)} \leq \|D_1u\|_{L_p\left( (-2^{k+j}, 2^{k+j}) \times \Pi_1 \right)}.
\end{equation}
Denote
\begin{align*}
&A_k = \left(2^{-k-1} \int_{-2^k}^{2^k} \int_{\Pi_1} |D_t^{1/2}u|^p + |Du|^p + \lambda^{p/2}|u|^p \, dx \, dt\right)^{1/p},\\
&B_k = \left(2^{-k-1} \int_{-2^k}^{2^k} \int_{\Pi_1} |h|^p + |g_i|^p + \lambda^{-p/2}|f|^p \, dx \, dt\right)^{1/p}.
\end{align*}
Then, the inequality \eqref{eq1213_01} along with \eqref{eq1213_03} implies that
\begin{equation}
							\label{eq1213_02}
A_k \leq N \sum_{j=k+1}^\infty 2^{-j/2} A_j + N 2^{k/2} \sum_{j=k+1}^\infty 2^{-j/2} B_j, \quad k=0,1,2,\ldots,
\end{equation}
where $N = N(d,\delta,p)$.
Choose a positive integer $k_0$ such that
\[
N \sum_{k=k_0}^\infty 2^{-k/2} \leq 1/2.
\]
We then multiply \eqref{eq1213_02} by $2^{-k/2}$ and sum over $k=k_0, k_0+1, \ldots$, to obtain
\[
\sum_{k=k_0}^\infty 2^{-k/2}A_k \leq N \sum_{k=k_0}^\infty 2^{-k/2}\sum_{j=k+1}^\infty 2^{-j/2} A_j + N \sum_{k=k_0}^\infty \sum_{j=k+1}^\infty 2^{-j/2}B_j
\]
\[
\leq N \sum_{j=k_0+1}^\infty 2^{-j/2}A_j \sum_{k=k_0}^{j-1} 2^{-k/2} + N \sum_{j=k_0+1}^\infty 2^{-j/2} B_j \sum_{k=k_0}^{j-1} 1
\]
\[
\leq \frac{1}{2} \sum_{j=k_0+1}^\infty 2^{-j/2}A_j + N \sum_{j=k_0+1}^\infty 2^{-j/4}B_j.
\]
It follows that
\[
\sum_{k=k_0}^\infty 2^{-k/2}A_k \leq N \sum_{j=0}^\infty 2^{-j/4}B_j.
\]
Finally, for the values $k=k_0-1, k_0-2, \ldots,0$, we invoke \eqref{eq1213_02} and use induction to have
\[
A_0 \leq N \sum_{j=0}^\infty 2^{-j/4} B_j,
\]
which is the desired inequality for $R=1$.
\end{proof}

%%%%%%%%%%%%%%%%%%%%%%%%%%%%%%%%%%%%%%%%%%%%%

In the remaining section, we consider the operator
\[
\overline{\cP}_\lambda u:=u_t-D_i(a_{ij}(t)D_j u)+\lambda u,
\]
where the coefficients $a_{ij}(t)$ only satisfy \eqref{eq0521_02} without any regularity assumptions.

We now use the notion of solutions given in Definition \ref{def0814_1}.
As in Remark \ref{rem1213_1}, we also consider equations with the conormal derivative boundary condition on a subset of the lateral boundary.
In particular, if the domain is $Q_R^+$ and the conormal derivative boundary condition is imposed on $(-R^2,R^2) \times \Gamma_R$, then the test functions in Definition \ref{def0814_1} are those that vanish on $(-R^2,R^2) \times (\partial B_R \cap \bR^d_+)$.
Note that the function space considered here is $\cH_p^1$, 
whereas Proposition \ref{prop1211_1} is concerned with $H_p^{1/2,1}$.

\begin{lemma}
       %\label{half cono Lp}
Let $p\in (1,\infty)$ and $\lambda \ge 0$. Assume $u\in \cH_p^1(\bR^{d+1}_+)$ satisfies 
\[
\overline{\cP}_\lambda u =D_ig_i+f \quad \text{in} \,\, \bR^{d+1}_+
\]
with the conormal derivative boundary condition on $\bR\times\partial \bR^d_+$, where $g_i, f \in L_p(\bR^{d+1}_+)$ with $f \equiv 0$ if $\lambda = 0$.
Then we have
       \begin{equation}
           \label{half cono lp esti}       \|Du\|_{L_p(\bR^{d+1}_+)}+\sqrt{\lambda}\|u\|_{L_p(\bR^{d+1}_+)}\le N\|g_i\|_{L_p(\bR^{d+1}_+)}+\frac{N}{\sqrt{\lambda}}\|f\|_{L_p(\bR^{d+1}_+)},
              \end{equation}
where $N=N(d,\delta,p)$.
\end{lemma}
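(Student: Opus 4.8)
The plan is to recognize $u$ as the unique $H_p^{1/2,1}$-solution furnished by Proposition \ref{prop1211_1} and then transfer the estimate obtained there to the $\cH_p^1$ setting. First assume $\lambda>0$. Since the coefficients $a_{ij}=a_{ij}(t)$ trivially satisfy Assumption \ref{assum0103_1}(i), Proposition \ref{prop1211_1}(i) (applied with $h\equiv 0$) produces a solution $v\in H_p^{1/2,1}(\bR^{d+1}_+)$ of $\overline{\cP}_\lambda v=D_ig_i+f$ with the conormal derivative boundary condition on $\bR\times\partial\bR^d_+$ that obeys
\[
\|Dv\|_{L_p(\bR^{d+1}_+)}+\sqrt{\lambda}\,\|v\|_{L_p(\bR^{d+1}_+)}\le N\|g_i\|_{L_p(\bR^{d+1}_+)}+N\lambda^{-1/2}\|f\|_{L_p(\bR^{d+1}_+)},
\]
i.e.\ precisely the right-hand side of \eqref{half cono lp esti}. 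Writing the equation as $v_t=D_i(a_{ij}D_jv-g_i)+(f-\lambda v)$ shows $v_t\in\bH_p^{-1}(\bR^{d+1}_+)$, so $v\in\cH_p^1(\bR^{d+1}_+)$, and by Remark \ref{rem0902_2} $v$ is also a solution in the sense of Definition \ref{def0814_1}. Hence it suffices to show $u=v$, that is, to prove uniqueness in $\cH_p^1(\bR^{d+1}_+)$.

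For the uniqueness, set $w=u-v\in\cH_p^1(\bR^{d+1}_+)$, so that $\overline{\cP}_\lambda w=0$ with the conormal derivative boundary condition on $\bR\times\partial\bR^d_+$. I would extend $w$ to all of $\bR^{d+1}$ by the even reflection in $x_1$, and extend the coefficients by the odd/even reflection already used in the proof of Proposition \ref{prop1211_1} (odd for $\bar a_{1j},\bar a_{j1}$ with $j\ge 2$, even otherwise), noting that the resulting $\bar a_{ij}$ again satisfy Assumption \ref{assum0103_1} and \eqref{eq0521_02}. A routine check of the weak formulation (using that test functions in Definition \ref{def0814_1} need not vanish on $\{x_1=0\}$) shows $\bar w\in\cH_p^1(\bR^{d+1})$ solves $\bar w_t-D_i(\bar a_{ij}D_j\bar w)+\lambda\bar w=0$ in $\bR^{d+1}$. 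By the unique solvability in $\cH_p^1(\bR^{d+1})$ for coefficients of the form in Assumption \ref{assum0103_1} (\cite[Theorem 6.1(iii)]{MR2764911}, as invoked in the proof of Proposition \ref{prop0105_1}), $\bar w\equiv 0$, hence $u=v$ and \eqref{half cono lp esti} holds for $\lambda>0$.

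It remains to treat $\lambda=0$, in which case $f\equiv0$. Given $u\in\cH_p^1(\bR^{d+1}_+)$ with $\overline{\cP}_0u=D_ig_i$ and the conormal condition, observe that for each $\lambda>0$ the same $u$ satisfies $\overline{\cP}_\lambda u=D_ig_i+\lambda u$. Let $v_\lambda\in H_p^{1/2,1}(\bR^{d+1}_+)$ solve $\overline{\cP}_\lambda v_\lambda=D_ig_i$ with the conormal condition (Proposition \ref{prop1211_1}(i)), so $\|Dv_\lambda\|_{L_p}+\sqrt{\lambda}\,\|v_\lambda\|_{L_p}\le N\|g_i\|_{L_p}$ with $N$ independent of $\lambda$. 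Then $w_\lambda:=u-v_\lambda\in\cH_p^1(\bR^{d+1}_+)$ satisfies $\overline{\cP}_\lambda w_\lambda=\lambda u$ with the conormal condition, and the case $\lambda>0$ already proved (with right-hand side data $f=\lambda u$, $g_i=0$) gives
\[
\|Dw_\lambda\|_{L_p(\bR^{d+1}_+)}\le N\lambda^{-1/2}\|\lambda u\|_{L_p(\bR^{d+1}_+)}=N\sqrt{\lambda}\,\|u\|_{L_p(\bR^{d+1}_+)}.
\]
Therefore $\|Du\|_{L_p(\bR^{d+1}_+)}\le\|Dv_\lambda\|_{L_p}+\|Dw_\lambda\|_{L_p}\le N\|g_i\|_{L_p(\bR^{d+1}_+)}+N\sqrt{\lambda}\,\|u\|_{L_p(\bR^{d+1}_+)}$, and letting $\lambda\searrow 0$ (here $\|u\|_{L_p(\bR^{d+1}_+)}<\infty$ since $u\in\cH_p^1$) yields \eqref{half cono lp esti}.

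The step I expect to be the main obstacle is the $\cH_p^1$-uniqueness in the second paragraph: one must check carefully that the even reflection of $w$ solves the reflected divergence-form equation in the $\cH_p^1$ weak sense with no condition imposed across $\{x_1=0\}$, and then correctly invoke the whole-space $\cH_p^1$ solvability theory for coefficients depending only on $(t,x_1)$. Everything else is a bookkeeping exercise built on Proposition \ref{prop1211_1}.
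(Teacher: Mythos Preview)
Your argument is correct, but the paper takes a much shorter route: it simply cites \cite[Proposition 7.2]{MR2764911}, which already gives \eqref{half cono lp esti} directly for $\lambda\ge\lambda_0(d,\delta,p)$ in the $\cH_p^1$ setting, and then uses the parabolic scaling $(t,x)\mapsto(\lambda_0t/\lambda,\sqrt{\lambda_0/\lambda}\,x)$ to cover $0<\lambda<\lambda_0$; the case $\lambda=0$ is obtained by letting $\lambda\searrow 0$ (equivalently, by writing $\overline{\cP}_\lambda u=D_ig_i+\lambda u$ and sending $\lambda\to 0$, as you essentially do). Your detour through Proposition \ref{prop1211_1} and the even/odd reflection to invoke whole-space $\cH_p^1$ uniqueness from \cite[Theorem 6.1(iii)]{MR2764911} is a legitimate alternative---it has the virtue of making explicit the passage from the $H_p^{1/2,1}$ solution back to the $\cH_p^1$ setting---but it is substantially longer and still relies on \cite{MR2764911}, so it does not gain in self-containment. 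Your $\lambda=0$ step is also correct but can be streamlined: there is no need to split $u=v_\lambda+w_\lambda$; applying the $\lambda>0$ estimate directly to $u$ with right-hand side $D_ig_i+\lambda u$ and letting $\lambda\searrow 0$ suffices.
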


\begin{proof}
As in the proof of Proposition \ref{prop0105_1}, we assume $\lambda > 0$.
Then, the estimate \eqref{half cono lp esti} follows from \cite[Proposition 7.2]{MR2764911}, which in fact states that the estimate holds for $\lambda \geq \lambda_0(d,\delta,p)$, where the latter may not be zero.
For $0 < \lambda < \lambda_0(d,\delta,p)$, we use the scaling argument in the proof of Proposition \ref{prop0105_1}.
The lemma is proved.
\end{proof}

Using the above $L_p$-estimates on the half space, we obtain the following local estimates on the half space. 
\begin{lemma}
    \label{lem half cono local}
    Let $R\in(0,\infty)$ and $p\in (1,\infty)$. Assume that $u\in \cH_p^1(Q_{2R}^+)$ satisfies 
\begin{equation*}
							%\label{eq0819_03}
\overline{\cP}_0 u=0 \quad \text{in} \,\, Q_{2R}^+
\end{equation*}
with the conormal derivative boundary condition on $(-4R^2,4R^2)\times \Gamma_{2R}$. Then, we have 
    \[
    \|Du\|_{L_p(Q_R^+)}\le N\|u\|_{L_p(Q_{2R}^+)},
    \]
    where $N=N(d,\delta,p,R)$.
\end{lemma}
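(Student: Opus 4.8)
\medskip

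The plan is to prove this local estimate by a now-standard scheme for parabolic equations in divergence form, combining a Caccioppoli-type inequality, the interior/boundary $L_p$-estimate on the half space (the preceding lemma), and an iteration to transfer the estimate from one fixed cube to a smaller one. By scaling I may assume $R=1$, so that the assumption is $\overline{\cP}_0 u = 0$ in $Q_2^+$ with the conormal derivative condition on $(-4,4)\times\Gamma_2$, and I want $\|Du\|_{L_p(Q_1^+)} \le N\|u\|_{L_p(Q_2^+)}$.

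\medskip

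First I would localize: fix a smooth cut-off $\zeta = \zeta(t,x)$ supported in $Q_{3/2}^+$ (more precisely in $(-(3/2)^2,(3/2)^2)\times B_{3/2}$) with $\zeta = 1$ on $Q_1$, and set $v = u\zeta$. Since $u$ solves the homogeneous equation with the conormal condition on the flat part of the boundary, $v \in \cH_p^1(\bR^{d+1}_+)$ satisfies
\[
\overline{\cP}_0 v = D_i\big(a_{ij}(t)\,u\,D_j\zeta\big) + \big(a_{ij}(t)D_ju\big)D_i\zeta - u\,D_i(a_{ij}(t)D_j\zeta) + u\,\zeta_t \quad \text{in }\bR^{d+1}_+,
\]
still with the conormal derivative boundary condition on $\bR\times\partial\bR^d_+$ (the cut-off does not disturb the boundary condition on the flat part). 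Writing the right-hand side in the form $D_iG_i + F$ — with $G_i$ involving $u\,D_j\zeta$ and $F$ involving $Du$, $u$, and $u\zeta_t$ — and applying the preceding lemma (the $L_p$-estimate on $\bR^{d+1}_+$, with $\lambda=0$, which is licit once one checks $v$ lies in the relevant space and the lower-order data are in $L_p$), I get
\[
\|Dv\|_{L_p(\bR^{d+1}_+)} \le N\Big(\|u\,D\zeta\|_{L_p(\bR^{d+1}_+)} + \|Du\cdot D\zeta\|_{L_p(\bR^{d+1}_+)} + \|u\,(\text{lower order in }\zeta)\|_{L_p(\bR^{d+1}_+)}\Big).
\]
This gives a first bound $\|Du\|_{L_p(Q_1^+)} \le N\big(\|u\|_{L_p(Q_{3/2}^+)} + \|Du\|_{L_p(Q_{3/2}^+)}\big)$, where the gradient still appears on the right.

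\medskip

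The main obstacle — and the step I would be most careful about — is absorbing the $\|Du\|_{L_p(Q_{3/2}^+)}$ term on the right-hand side. I cannot absorb it directly into the left side since the domains differ. The standard remedy is an interpolation-plus-iteration argument à la Caccioppoli: run the localization on a one-parameter family of concentric (parabolic) cylinders $Q_\rho^+$, $1\le\rho<\rho'\le 2$, with cut-offs whose derivatives scale like $(\rho'-\rho)^{-1}$ and $(\rho'-\rho)^{-2}$, obtaining
\[
\|Du\|_{L_p(Q_\rho^+)} \le \frac{N}{\rho'-\rho}\,\|Du\|_{L_p(Q_{\rho'}^+)} + \frac{N}{(\rho'-\rho)^2}\,\|u\|_{L_p(Q_2^+)};
\]
then one uses the interpolation inequality $\|Du\|_{L_p} \le \varepsilon\|Du\|_{L_p}^{\text{(on larger set, with better weight)}} + \dots$ — more cleanly, one applies the iteration lemma (e.g. the one in Giaquinta's book: if $\phi(\rho)\le \theta\phi(\rho') + A/(\rho'-\rho)^\alpha + B$ with $0<\theta<1$ and $\phi$ bounded nondecreasing, then $\phi(1)\le C(A+B)$) after splitting off a small fraction of $\|Du\|_{L_p(Q_{\rho'}^+)}$ via Young's inequality on the cross term $Du\cdot D\zeta$. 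Concretely, in the Caccioppoli step one tests the weak formulation against $\varphi = u\zeta^2$ (which is an admissible test function since it vanishes near the curved boundary and respects the conormal condition), uses ellipticity \eqref{eq0521_02} to bound $\delta\int|Du|^2\zeta^2$ from below, and uses the boundedness of $a_{ij}$ plus Young's inequality to push $\int a_{ij}D_ju\,\zeta\,u D_i\zeta$ to the right with a small constant; for general $p\neq 2$ one instead combines the $L_p$-estimate from the preceding lemma with the $\rho$-family of cut-offs and the iteration lemma, which is exactly the route I would write up. Once the gradient term is absorbed, I am left with $\|Du\|_{L_p(Q_1^+)}\le N\|u\|_{L_p(Q_2^+)}$ with $N=N(d,\delta,p)$; restoring the scaling introduces the dependence on $R$, giving $N=N(d,\delta,p,R)$ as claimed.
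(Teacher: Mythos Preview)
Your approach is essentially the same as the paper's: localize with a cut-off, apply the half-space estimate \eqref{half cono lp esti} to $\zeta u$, and then run an iteration to absorb the gradient term on the right. One small technical slip: you invoke the preceding lemma with $\lambda=0$, but that lemma requires $f\equiv 0$ when $\lambda=0$, and your non-divergence term $F$ (containing $a_{ij}D_ju\,D_i\zeta$ and $u\zeta_t$) is not zero. The fix is immediate---apply the lemma with any fixed $\lambda>0$, writing $\overline{\cP}_\lambda v = D_iG_i + (F+\lambda v)$; since $v=\zeta u$ has compact support, the extra $\lambda v$ is harmless and the iteration goes through unchanged.
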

\begin{proof}
Since the estimate \eqref{half cono lp esti} is available, we follow the proof of \cite[Corollary 3.2]{MR2764911} with $p$ in place of $2$.
Indeed, by multiplying $u$ by an appropriate cut-off function $\zeta$ with compact support in $Q_{2R}$, we have that $\zeta u$ satisfies a parabolic equation in $\bR \times \bR^d_+$ with the conormal derivative boundary condition on $\bR \times \partial \bR^d_+$.
Then, we use the estimate \eqref{half cono lp esti} for $\zeta u$ with an iteration argument.
We omit the details.
\end{proof}

We state a parabolic version of Sobolev-Poincar\'e embeddings on a half space. More general results are in \cite[Lemma 3.8]{MR4387198}. The main ingredients of the proof are embedding results in \cite{MR4387945}.

\begin{lemma}
    \label{lem half cono embed}
    Let $R\in(0,\infty)$ and $1<p< p_{1}<\infty$. Suppose that
    \[
    \frac{1}{p}< \frac{1}{d+2}+\frac{1}{p_{1}},
    \]
    and
    $u\in W_{p}^{0,1}({Q_{2R}^+})$ satisfies 
    \[
     u_t=D_ig_i \quad \text{in} \,\, Q_{2R}^+
    \]
    in the distribution sense, where $g=(g_1,\dots g_d)\in (L_p(Q_{2R}^+))^d$. Then, we have 
    \[
    \|u-(u)_{Q_R^+}\|_{L_{p_{1}}(Q_R^+)}\le N(\|Du\|_{L_p(Q_{2R}^+)}+\|g_i\|_{L_p(Q_{2R}^+)}),
    \]
    where $N=N(d,\delta,p,p_{1},R)$.
\end{lemma}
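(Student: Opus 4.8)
The plan is to reduce the parabolic Sobolev–Poincaré estimate on $Q_R^+$ to the embedding results of \cite{MR4387945} (as packaged in \cite[Lemma 3.8]{MR4387198}) after an appropriate localization. The essential structural input is that $u$ solves $u_t = D_i g_i$ in $Q_{2R}^+$ with the conormal derivative boundary condition on the flat portion of the boundary, so that $u_t$ can be controlled as a distribution of the form $D_i g_i$ with $g_i \in L_p$, while $D u \in L_p$. This is precisely the regularity needed to place $u$ in a parabolic Sobolev space of mixed smoothness $(1/2,1)$ type after removing the mean.

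First I would use scaling to reduce to $R = 1$ (the dependence of $N$ on $R$ is harmless since only one scale is involved). Next I would choose a smooth cutoff $\zeta = \zeta(t,x)$ supported in $Q_{2}^+$ (in the sense that it vanishes near the curved part $\partial B_2 \cap \bR^d_+$ of the boundary but not near $\Gamma_2$) with $\zeta \equiv 1$ on $Q_1^+$, and consider $v = \zeta(u - c)$ where $c = (u)_{Q_1^+}$. A direct computation shows $v_t = D_i(\zeta g_i) - g_i D_i\zeta + (u - c)\zeta_t$, so $v_t$ is again of the form $D_i\tilde g_i + \tilde f$ in $\bR\times\bR^d_+$ with $\tilde g_i, \tilde f \in L_p$, with norms bounded by $\|Du\|_{L_p(Q_2^+)} + \|g_i\|_{L_p(Q_2^+)}$ (using the local Poincaré inequality to absorb the $(u-c)\zeta_t$ term). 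Moreover $v$ satisfies the conormal (i.e. homogeneous Neumann) condition on $\bR\times\partial\bR^d_+$ because $\zeta$ preserves it and $u$ already satisfies it on the flat boundary. Crucially, the even reflection of $v$ across $\{x_1 = 0\}$ extends $v$ to a function on all of $\bR\times\bR^d$ in the right Sobolev class with the same structural bound on $v_t$ (the conormal condition is exactly what makes the even reflection compatible with $v_t$ being a first-order distribution with no singular measure on the hyperplane).

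With $v$ extended to $\bR\times\bR^d$, compactly supported, with $Dv \in L_p$ and $v_t = D_i\tilde g_i + \tilde f$, $\tilde g_i,\tilde f\in L_p$, I would invoke the parabolic embedding of \cite[Lemma 3.8]{MR4387198} / \cite{MR4387945}: under the condition $\frac1p < \frac{1}{d+2} + \frac{1}{p_1}$, one has $v \in L_{p_1}$ with
\[
\|v\|_{L_{p_1}(\bR\times\bR^d)} \le N\big(\|Dv\|_{L_p(\bR\times\bR^d)} + \|\tilde g_i\|_{L_p(\bR\times\bR^d)} + \|\tilde f\|_{L_p(\bR\times\bR^d)}\big).
\]
Restricting to $Q_1^+$, where $v = u - c$, gives $\|u - (u)_{Q_1^+}\|_{L_{p_1}(Q_1^+)} \le N(\|Du\|_{L_p(Q_2^+)} + \|g_i\|_{L_p(Q_2^+)})$, which is the claim.

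The main obstacle I expect is the even-reflection step: one must verify that the conormal boundary condition for $v$ on $\bR\times\partial\bR^d_+$ is *exactly* the compatibility condition ensuring that the even extension $\bar v$ satisfies $\bar v_t = D_i\bar g_i + \bar f$ in the distributional sense on the full space $\bR\times\bR^d$ with no contribution concentrated on $\{x_1=0\}$ — i.e. that there is no jump term in $D_1(\text{even reflection})$ that would spoil membership in the embedding space. This is the reflection identity ($\bar g_1$ is reflected oddly, the other $\bar g_i$ and $\bar f$ evenly), and checking it against test functions is the technical heart; once it is in place the embedding theorem applies verbatim. One should also be mildly careful that the embedding result of \cite{MR4387945} is stated for the class to which $\bar v$ belongs (functions with $Dv\in L_p$ and $v_t$ a first-order distribution), but this is exactly the setting of \cite[Lemma 3.8]{MR4387198}, so it may be quoted directly. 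I would therefore keep the written proof short, stating the reflection and cutoff reductions and then citing \cite[Lemma 3.8]{MR4387198}.
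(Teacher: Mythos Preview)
The paper does not give a proof here: it states the lemma and immediately remarks that ``more general results are in \cite[Lemma 3.8]{MR4387198}'', with the embedding theorems of \cite{MR4387945} as the underlying ingredients. In other words, the paper treats Lemma~\ref{lem half cono embed} as a direct specialization of a known result on half-cylinders, with no localization or reflection needed. Your proposal ultimately lands on the same citation, so in spirit the approaches agree; the difference is that you insert a cutoff-plus-even-reflection reduction to $\bR\times\bR^d$, whereas the paper applies the cited Sobolev--Poincar\'e result directly on the half-cylinder.

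Two points in your reduction deserve care. First, to bound the commutator term $(u-c)\zeta_t$ in $L_p(Q_2^+)$ you invoke a ``local Poincar\'e inequality''; but $c=(u)_{Q_1^+}$ is a space-time average, so a purely spatial Poincar\'e does not suffice, and what you actually need is a parabolic Poincar\'e inequality of the same type as the lemma (with $p_1=p$). That case is standard and provable independently, but it is an extra ingredient you should state rather than absorb. Second, you assume the conormal condition on the flat boundary so that the even reflection of $v$ satisfies $\bar v_t=D_i\bar g_i+\bar f$ on $\bR\times\bR^d$ without a singular layer on $\{x_1=0\}$. The lemma as stated only asserts $u_t=D_ig_i$ ``in the distribution sense'' in $Q_{2R}^+$, which in the usual reading allows test functions vanishing on $\Gamma_{2R}$; under that reading your reflection step is not justified. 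In the paper's applications $u$ does satisfy the conormal condition, so this is harmless there, but for the lemma as written the cleanest route is the paper's: quote \cite[Lemma 3.8]{MR4387198} directly on $Q_R^+\subset Q_{2R}^+$ and avoid both the cutoff and the reflection.
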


Now, we are prepared to achieve arbitrary degrees of integrability for solutions to homogeneous equations with the conormal derivative boundary condition on a half space. 
%%%%%%%lambda=0 case
\begin{comment}
    \begin{lemma}
\label{lem half cono local L_2}
     Let $p\in [2,\infty)$. Assume that $u\in C_{loc}^\infty(\overline{Q_2^+})$ satisfies 
    \[
    \overline{\cP_0} u=0 \quad \text{in} \quad Q_2^+
    \]
    with the conormal derivative boundary conditions on $\{x_1=0\}\cap Q_2$. Then, we have 
    \[
    (|Du|^p)_{Q_1^+}^{1/p}\le N(|Du|^2)_{Q_2^+}^{1/2},
    \]
    where $N=N(d,\delta,p)$.
\end{lemma}
\end{comment}

\begin{lemma}
\label{lem half cono local lambda}
Let $p \in (1,\infty)$.
Assume that $u\in\cH_p^1(Q_{4R})$ satisfies 
\begin{equation}
							\label{eq0819_04}
\overline{\cP}_\lambda u=0 \quad \text{in} \,\, Q_{4R}^+
\end{equation}
with the conormal derivative boundary condition on $\left(-(4R)^2,(4R)^2\right) \times \Gamma_{4R}$.
Then, for any $p_{1} \in (p,\infty)$, we have 
\[
(|Du|^{p_{1}})_{Q_{R}^+}^{1/p_{1}}+\sqrt{\lambda}(|u|^{p_{1}})_{Q_{R}^+}^{1/{p_{1}}}\le N(|Du|^p)_{Q_{4R}^+}^{1/p}+N\sqrt{\lambda}(|u|^p)_{Q_{4R}^+}^{1/p},
\]
where $N=N(d,\delta,p,p_{1})$.
\end{lemma}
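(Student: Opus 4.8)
The goal is a reverse-Hölder / self-improvement estimate for $|Du|$ (and $\sqrt\lambda|u|$) for homogeneous solutions of $\overline{\cP}_\lambda u=0$ with the conormal condition on the flat portion of the boundary. The natural route is a Moser-type iteration built on the three preceding lemmas: the local $L_p$ gradient bound (Lemma \ref{lem half cono local}), the parabolic Sobolev--Poincar\'e embedding (Lemma \ref{lem half cono embed}), and a scaling/covering argument. First I would reduce to $R$ fixed (say $R=1$ after scaling, noting the constant is allowed to depend on $R$ anyway), and dispose of the $\sqrt\lambda\|u\|$ term: since $\overline{\cP}_\lambda u=0$ means $u_t-D_i(a_{ij}D_ju)=-\lambda u$, the function $v:=u$ solves an equation whose right-hand side is $-\lambda u = f$ with $f=-\lambda u$, so $\sqrt\lambda|u|$ and $|Du|$ can be treated together; alternatively absorb $\lambda$ by the substitution that turns $Q_{4R}^+$ estimates at level $\lambda$ into $\lambda$-free ones via Lemma \ref{lem half cono local} applied to $e^{\lambda t/\cdots}$-type rescalings — but the cleanest is to carry the pair $(|Du|,\sqrt\lambda|u|)$ through the iteration simultaneously, which is why both lemmas above are stated for that pair.

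\textbf{Main steps.} (1) \emph{Caccioppoli-type step:} For any $Q_\rho^+(X_0)\subset Q_{4R}^+$ with center $X_0$ on the flat boundary (or interior), Lemma \ref{lem half cono local} gives $\|Du\|_{L_p(Q_{\rho/2}^+(X_0))}\le N\rho^{-1}\|u-c\|_{L_p(Q_\rho^+(X_0))}$ for the appropriate constant $c$ (subtract a constant since the equation and the conormal condition are invariant under $u\mapsto u-c$; for $\lambda>0$ one keeps $c=0$ but then uses the $\sqrt\lambda\|u\|$ control). (2) \emph{Embedding step:} From $\overline{\cP}_\lambda u=0$ we have $u_t=D_i(a_{ij}D_ju)-\lambda u = D_iG_i+F$ with $G_i=a_{ij}D_ju\in L_p$ and $F=-\lambda u$; writing $F$ itself in divergence-free-compatible form is not needed because Lemma \ref{lem half cono embed} is stated for $u_t=D_ig_i$ — so I would first absorb $\lambda u$ by working with $p_1$ slightly above $p$ at each stage and using that on each small cylinder $\lambda u$ contributes a lower-order term controlled by $(|Du|^p)+(\sqrt\lambda|u|)^p$ via a standard interpolation. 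Then Lemma \ref{lem half cono embed} upgrades $L_p$ to $L_{p_1}$ with $1/p_1 > 1/p - 1/(d+2)$, gaining a fixed amount of integrability in one step. (3) \emph{Iteration:} Combining (1) and (2) on a sequence of shrinking half-cylinders $Q_{r_k}^+$, $r_k\downarrow R$, with $p_0=p<p_1<p_2<\cdots$, each step gaining the fixed Sobolev exponent, after finitely many steps (depending on how far $p_1$ is from $p$) one reaches any prescribed target exponent; summing the geometric losses in the cutoff radii yields the stated estimate with $N=N(d,\delta,p,p_1)$. (4) \emph{Scaling back} to general $R$.

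\textbf{Main obstacle.} The delicate point is the interaction of the zeroth-order term $\lambda u$ with the embedding lemma, which is phrased only for $u_t=D_ig_i$ (no $F$ term). One must check that, at each iteration step, the contribution of $\lambda u$ can be folded into the right-hand side without destroying the gain in integrability — either by treating $\sqrt\lambda|u|$ as an extra component that satisfies the same scaling as $|Du|$ (so that $\lambda u = \sqrt\lambda\cdot(\sqrt\lambda u)$ is ``of the same order'' as a gradient after the natural parabolic rescaling that sends $\sqrt\lambda R$ to $1$), or by iterating in $p$ slowly enough that the $\lambda$-term is always lower order and reabsorbed via Young's inequality. A secondary technicality is the choice of the subtracted constant $c$ in the Caccioppoli step near the boundary: on the flat part $\Gamma_{4R}$ the conormal condition is preserved under subtracting constants, so the argument is the standard Poincar\'e-with-mean over $Q_r^+$, but one must make sure the cutoff functions respect the split between the Neumann part $\Gamma$ and the curved part $\partial B\cap\bR^d_+$ exactly as in the proof of Lemma \ref{lem half cono local}. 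None of this is conceptually new — it is the conormal half-space analogue of the classical local boundedness bootstrap — so the proof will mostly consist of invoking Lemmas \ref{lem half cono local} and \ref{lem half cono embed} in alternation and bookkeeping the exponents; I would write it in that spirit and omit routine details.
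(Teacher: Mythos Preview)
Your overall strategy---bootstrap by alternating Lemma \ref{lem half cono local} and Lemma \ref{lem half cono embed} on shrinking half-cylinders---matches the paper's, but two points need attention. On $\lambda$: this is not the obstacle you make it. The paper disposes of $\lambda$ in one line via Agmon's trick (see \cite[Lemma 6.3.8]{MR2435520}): pass to $u(t,x)\cos(\sqrt\lambda\,x_{d+1})$ in one higher spatial dimension, which solves the $\lambda=0$ equation with the same conormal condition; after this reduction $u_t=D_i(a_{ij}D_ju)$ exactly and Lemma \ref{lem half cono embed} applies directly, so the workarounds you propose for absorbing $\lambda u$ are unnecessary.

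The real gap is that your iteration is circular as written. To invoke Lemma \ref{lem half cono local} at the raised exponent $p^{(k)}$ you must know $u\in\cH_{p^{(k)}}^1(Q_r^+)$, not merely $u\in L_{p^{(k)}}$; the embedding step supplies only the latter. The paper fills this by a separate qualitative argument preceding the quantitative chain: multiply $u$ by a cutoff $\zeta$, note that $\zeta u$ satisfies an equation on $(-16,16)\times\bR^d_+$ with right-hand side $D_ig_i+f$ where $g_i=-a_{ij}(t)\,uD_j\zeta\in L_{p^{(1)}}$ (since $u\in L_{p^{(1)}}$ from one application of the embedding) and $f\in L_p$, solve this in $W_{p^{(1)}}^{0,1}$ via the solvability theory of \cite{MR4387945} and \cite{MR2764911}, and identify the solution with $\zeta u$ by uniqueness in $\cH_p^1$. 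This yields $Du\in L_{p^{(1)}}$ locally, hence $u\in\cH_{p^{(1)}}^1(Q_r^+)$, and only then is the chain $\|Du\|_{L_{p^{(1)}}(Q_1^+)}\le N\|u-(u)_{Q_2^+}\|_{L_{p^{(1)}}(Q_2^+)}\le N\|Du\|_{L_p(Q_4^+)}$ justified. This regularity step is not routine bookkeeping; without it Lemma \ref{lem half cono local} at exponent $p^{(1)}$ is unavailable and the bootstrap does not close.
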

\begin{proof}
    Utilizing an idea by S. Agmon (see, for example, \cite[Lemma 6.3.8]{MR2435520}), we assume that $\lambda=0$. By scaling, we also assume that $R=1$. 
For $p_{1} \in (p,\infty)$, suppose that
\[
\frac{1}{p} < \frac{1}{d+2} + \frac{1}{p_{1}}, \quad \text{i.e.}, \quad \left\{
\begin{aligned}
p_{1} < \frac{p(d+2)}{d+2-p} \quad &\text{if} \quad p \in (1,d+2),
\\
p_{1} \in (p,\infty) \quad &\text{if} \quad p \in [d+2,\infty).
\end{aligned}
\right.
\]

We first prove that $u \in\cH_{p_{1}}^1(Q_r^+)$ for any $r \in (0,4)$.
Since $u\in\cH_p^1(Q_4^+)$, by the parabolic embedding (see, for instance, \cite[Theorem 5.2]{MR4387945}), we have $u \in L_{p_{1}}(Q_4^+)$.
Denote
\[
\mathfrak{Q}:= \left(-4^2,4^2\right)  \times \bR^d_+.
\]
Multiplying a cut-off function $\zeta$ with compact support in $Q_4$, we see that $\zeta u \in \cH_p^1(\mathfrak{Q})$ satisfies
\[
(\zeta u)_t - D_i\left(a_{ij}(t) D_j(\zeta u)\right) = D_i g_i + f
\]
in $\mathfrak{Q}$ with the conormal derivative boundary condition on $(-4^2,4^2)  \times \partial\bR^d_+$ and the zero initial condition $(\zeta u)(-4^2,x) = 0$, where
\[
g_i = - a_{ij}(t) u D_j \zeta, \quad f = u \zeta_t - D_i \zeta a_{ij}(t) D_j u.
\]
Note that $f \in L_p(\mathfrak{Q})$ and $g_i \in L_{p_{1}}(\mathfrak{Q})$; the latter is because $u \in L_{p_{1}}(Q_4^+)$.
Then, by \cite[Theorem 2.15]{MR4387945} combined with \cite[Theorem 2.5]{MR2764911}, there exists a unique $w \in W_{p_{1}}^{0,1}(\mathfrak{Q})$ satisfying
\begin{equation}
							\label{eq0828_01}
w_t - D_i(a_{ij}(t)D_jw) = D_ig_i + f
\end{equation}
in $\mathfrak{Q}$ the with conormal derivative boundary condition on $(-4^2,4^2)  \times \partial\bR^d_+$ and the zero initial condition $w(0,x) = 0$.
Then, since $g_i, f \in L_p(\mathfrak{Q})$, we have $w\in \cH_{p}^1(\mathfrak{Q})$. Using the unique solvability of \eqref{eq0828_01} in $\cH_p^1(\mathfrak{Q})$ with the same initial and boundary conditions, one can show that $w = \zeta u$.
This shows that
\[
\zeta u, D(\zeta u) \in L_{p_{1}}(\mathfrak{Q}).
\]
If we assume $\zeta \equiv 1$ on $Q_r$, then $u, Du \in L_{p_{1}}(Q_r^+)$.
Moreover, the equation \eqref{eq0819_04} $u_t = D_i\left(a_{ij}(t) D_ju\right)$
in $Q_r^+$ in the distribution sense means that $u \in \cH_{p_{1}}^1(Q_r^+)$ because $D_ju \in L_{p_{1}}(Q_r^+)$.

Upon noticing that $u-(u)_{Q_2^+} \in \cH_{p_{1}}^1(Q_2^+)$ also satisfies the equation \eqref{eq0819_04} with the same boundary condition, we 
apply Lemma \ref{lem half cono local} to $u-(u)_{Q_2^+}$ and Lemma \ref{lem half cono embed} to $u_t = D_i(a_{ij}(t) D_ju)$, where $u$ also belongs to $W_p^{0,1}(Q_4^+)$, to get
\[
\|Du\|_{L_{p_{1}}(Q_1^+)}\le N\|u-(u)_{Q_2^+}\|_{L_{p_{1}}(Q_2^+)}\le N\|Du\|_{L_p(Q_4^+)}.
\]
If $p \in (1,d+2)$ and $p_{1} \geq p(d+2)/(d+2-p)$, as in the standard bootstrap argument, we repeat the above process as many times as necessary to reach the desired $p_{1}$.
The lemma is proved. 
\end{proof}

\begin{remark}
							\label{rem0903_1}
The above lemma holds for $u \in H_2^{1/2,1}(\bR \times B_{4R}^+)$ satisfying
\begin{equation}
							\label{eq0903_01}
\overline{\cP}_\lambda u=0 \quad \text{in} \,\, \bR \times B_{4R}^+
\end{equation}
with the conormal derivative boundary condition on $\bR \times \Gamma_{4R}$.
Indeed, from Remark \ref{rem0902_2} along with the fact that $u$ satisfies \eqref{eq0903_01}, it follows that $u$ also belongs to $\cH_2^1(Q_{4R}^+)$ and satisfies $\overline{\cP}_\lambda u=0$ in $Q_{4R}^+$ with the conormal derivative boundary condition on $\left(-(4R)^2,(4R)^2\right) \times \Gamma_{4R}$ in the sense of Definition \ref{def0814_1} (or as in Lemma \ref{lem half cono local lambda}).
\end{remark}

%%%%%%%%%%%%%%%%%%%%%%%
So far, we have obtained boundary estimates.
Here, we present the corresponding interior estimate.
The proof is based on the same argument.
Thus, we state the result without providing a proof.

\begin{lemma}
%\label{lem interior cono local lambda}
     Let $p\in (1,\infty)$. Assume that $u\in \cH_p^1(Q_{4R})$ satisfies 
\begin{equation}
							\label{eq0819_05}
\overline{\cP}_\lambda u=0 \quad \text{in} \,\, Q_{4R}.
\end{equation}
Then, for any $p_{1}\in (p,\infty)$, we have 
\begin{equation}
							\label{eq0114_01}
(|Du|^{p_{1}})_{Q_{R}}^{1/p_{1}}+\sqrt{\lambda}(|u|^{p_{1}})_{Q_{R}}^{1/p_{1}}\le N(|Du|^p)_{Q_{4R}}^{1/p}+N\sqrt{\lambda}(|u|^p)_{Q_{4R}}^{1/p},
\end{equation}
where $N=N(d,\delta,p)$.
\end{lemma}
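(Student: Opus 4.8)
The plan is to follow the proof of Lemma~\ref{lem half cono local lambda} essentially verbatim, replacing the half-ball $B_{4R}^+$ by the full ball $B_{4R}$ and dropping the conormal boundary condition, so that each boundary ingredient is swapped for its interior analogue. First I would invoke Agmon's trick (introducing a dummy spatial variable, cf.\ \cite[Lemma~6.3.8]{MR2435520}) to reduce to $\lambda=0$, and then rescale via $t\mapsto R^2t$, $x\mapsto Rx$ to assume $R=1$. Two auxiliary estimates are needed. The first is the interior counterpart of Lemma~\ref{lem half cono local}: if $v\in\cH_p^1(Q_2)$ solves $\overline{\cP}_0 v=0$ in $Q_2$, then $\|Dv\|_{L_p(Q_1)}\le N\|v\|_{L_p(Q_2)}$; this follows, exactly as there, by multiplying $v$ by a cut-off supported in $Q_2$, applying the whole-space $L_p$-estimate for divergence-form parabolic equations with merely time-measurable coefficients (Proposition~\ref{prop0105_1} with $h\equiv0$, or the corresponding result in \cite{MR2764911}), and iterating. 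The second is the interior Sobolev--Poincar\'{e} embedding, the analogue of Lemma~\ref{lem half cono embed}: if $1<p<p_1<\infty$ with $1/p<1/(d+2)+1/p_1$ and $v\in W_p^{0,1}(Q_2)$ satisfies $v_t=D_i g_i$ in the distribution sense with $g_i\in L_p(Q_2)$, then $\|v-(v)_{Q_1}\|_{L_{p_1}(Q_1)}\le N\big(\|Dv\|_{L_p(Q_2)}+\|g_i\|_{L_p(Q_2)}\big)$ (see \cite{MR4387198,MR4387945}).

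With these in hand, I would first carry out the bootstrap that upgrades the integrability of $u$, showing $u\in\cH_{p_1}^1(Q_r)$ for every $r\in(0,4)$ under the constraint $1/p<1/(d+2)+1/p_1$. Starting from $u\in\cH_p^1(Q_4)$, the parabolic embedding \cite[Theorem~5.2]{MR4387945} gives $u\in L_{p_1}(Q_4)$. Multiplying by a cut-off $\zeta$ with compact support in $Q_4$ and $\zeta\equiv1$ on $Q_r$, the function $\zeta u$ solves, on $(-4^2,4^2)\times\bR^d$, the equation $(\zeta u)_t-D_i(a_{ij}(t)D_j(\zeta u))=D_i g_i+f$ with zero initial data, where $g_i=-a_{ij}(t)uD_j\zeta\in L_{p_1}$ and $f=u\zeta_t-D_i\zeta\,a_{ij}(t)D_j u\in L_p$. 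By solvability in $W_{p_1}^{0,1}$ (\cite[Theorem~2.15]{MR4387945} together with \cite[Theorem~2.5]{MR2764911}) there is a solution $w\in W_{p_1}^{0,1}$; since $g_i,f\in L_p$ too, $w\in\cH_p^1$, and uniqueness in $\cH_p^1$ forces $w=\zeta u$, so $u,Du\in L_{p_1}(Q_r)$. Since $u_t=D_i(a_{ij}(t)D_j u)$ in $Q_r$ with $D_j u\in L_{p_1}(Q_r)$, this upgrades to $u\in\cH_{p_1}^1(Q_r)$.

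To finish, I would note that $u-(u)_{Q_2}$ also solves $\overline{\cP}_0(u-(u)_{Q_2})=0$ in $Q_2$, and then apply the first auxiliary estimate to $u-(u)_{Q_2}$ and the embedding to $u_t=D_i(a_{ij}(t)D_j u)$ (taking $g_i=a_{ij}(t)D_j u$) to obtain
\[
\|Du\|_{L_{p_1}(Q_1)}\le N\|u-(u)_{Q_2}\|_{L_{p_1}(Q_2)}\le N\|Du\|_{L_p(Q_4)}.
\]
Undoing the scaling and Agmon's reduction then yields \eqref{eq0114_01}. If $p_1$ is large enough that $1/p\ge1/(d+2)+1/p_1$, one reaches it by iterating this procedure finitely many times, as in a standard bootstrap.

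I expect the main difficulty to be the same one emphasized throughout the paper: for divergence-form equations $u_t$ lies only in $\bH_p^{-1}$, so no interior regularity statement that presupposes $u_t\in L_p$ is available. The workaround is precisely to run the bootstrap inside $\cH_p^1$ and $W_p^{0,1}$, leaning on the $L_p$-solvability theory for time-measurable coefficients from \cite{MR2764911,MR4387945}, and to use a Sobolev--Poincar\'{e} lemma formulated for $v\in W_p^{0,1}$ with $v_t$ only a distribution of divergence form. Once $Du$ (and hence $u$) is known to be more integrable, the remaining Caccioppoli-type bounds are routine, so the interior case carries essentially no new content beyond the half-space case.
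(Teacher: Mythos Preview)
Your approach is correct and is exactly what the paper does: it explicitly says the interior lemma follows by ``the same argument'' as Lemma~\ref{lem half cono local lambda} and omits the proof. One small point worth noting: the argument you outline, like the boundary version, produces a constant $N=N(d,\delta,p,p_{1})$, whereas the lemma as stated claims $N=N(d,\delta,p)$ independent of $p_{1}$; the paper's remark immediately following the lemma explains that in the interior case one can actually take $p_{1}=\infty$ (because every spatial derivative of $u$ is again a homogeneous solution, cf.\ \cite[Lemma~4]{MR2771670}), and it is this $L_\infty$ bound that removes the $p_{1}$-dependence.
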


\begin{remark}
Indeed, it is possible to have $p_{1} = \infty$ in \eqref{eq0114_01}, corresponding to the $L_\infty$-estimate for the solution to the homogeneous equation \eqref{eq0819_05} as all spatial derivatives are also solutions to the homogeneous equation. See the proof of \cite[Lemma 4]{MR2771670} for the details. Consequently, unlike in Lemma \ref{lem half cono local lambda}, the constant $N$ in the above estimate depends only on $d, \delta, p$.
\end{remark}

\section{Estimates on a Reifenberg-flat domain}\label{sec estimates on a Reifenberg}

In this section, we obtain interior and boundary estimates for solutions to equations on a Reifenberg-flat domain.

\subsection{Estimates of \texorpdfstring{$u$}{\it u} and \texorpdfstring{$Du$}{\it Du} on a Reifenberg-flat domain}

Throughout this subsection, we consider the parabolic operator
\[
\cP_\lambda u :=u_t-D_i(a_{ij}D_j u)+\lambda u, 
\]
where the coefficients $a_{ij}$ satisfy the conditions in \eqref{eq0521_02} and Assumption \ref{assum rei coeffi}.
We also assume that the spatial domain $\Omega$ satisfies Assumption \ref{assum rei coeffi}.

\begin{lemma}
							\label{lem0901_1}
Let $\lambda \in (0,\infty)$, $\nu \in (1,\infty)$, $\cQ=\bR\times \Omega$, and $(t_0,x_0) \in \bR \times \partial\Omega$.
Assume that $u\in H_\nu^{1/2,1}(\cQ)$ satisfies
\begin{equation}
							\label{eq0829_01}
\cP_\lambda u=D_t^{1/2} h +D_ig_i+f \quad \text{in $\cQ$}
\end{equation}
with the conormal derivative boundary condition on $\bR\times {\partial\Omega}$, where $h, g_i, f \in L_\nu(\cQ)$.
Also, assume that for $R \in (0,R_0]$, where $R_0$ is from Assumption \ref{assum rei coeffi}, by rotation and translation (see Figure \ref{fig1}),
\begin{equation}
							\label{eq0829_03}
\{(y_1,y'):y_1 > 0\}\cap B_R(x_0) \subset\Omega_R(x_0) \subset \{(y_1,y'):y_1 > -2\gamma_0R\}\cap B_R(x_0).
\end{equation}
In particular, the first coordinate of $x_0$ is negative, so that $B_R^+(x_0) \subset B_R^+$. 
Set
\[
\bar{a}_{ij}(t)=\frac{1}{|B_R(x_0)|}\int_{B_R(x_0)}a_{ij}(t,y)\,dy,
\]
and denote
\[
\overline{\cP}_\lambda u = u_t - D_i\left(\bar{a}_{ij}D_ju\right) + \lambda u.
\]
Then, $u \in H_\nu^{1/2,1}\left(\bR \times B_R^+(x_0)\right)$ satisfies 
    \begin{equation}
        \label{eq of u on half space}
\overline{\cP}_\lambda u =D_t^{1/2}\tilde{h}+D_i\tilde{g}_i+\tilde{f} \quad \text{in}\,\, \bR\times B_R^+(x_0)
    \end{equation}
with the conormal derivative boundary condition on $\bR\times (\{x_1=0\}\cap \overline{B_R(x_0)})$.
In \eqref{eq of u on half space}, $\tilde{h}$, $\tilde{g}_i$, and $\tilde{f}$ are defined as follows:
For $(t,x_1,x')\in \bR\times B_R^+(x_0)$,
\[
\begin{aligned}
\tilde{h}(t,x_1,x') &= h(t,x_1,x')+h(t,-x_1,x')I_{\Omega^*_R(x_0)}(-x_1,x')
\\
&+H(D_t^{1/2}u)(t,-x_1,x')I_{\Omega^*_R(x_0)}(-x_1,x'),
\\
\tilde{g}_i(t,x_1,x')
&= g_i(t,x_1,x')+(-1)^{\delta_{1i}}g_i(t,-x_1,x')I_{\Omega^*_R(x_0)}(-x_1,x')
\\
&+({a}_{ij}-\bar{a}_{ij})D_j u + \sum_{j=1}^d(-1)^{\delta_{1i}} (a_{ij}D_ju)(t,-x_1,x') I_{\Omega^*_R(x_0)}(-x_1,x'),
\\
\tilde{f}(t,x_1,x')&=f(t,x_1,x')+f(t,-x_1,x')I_{\Omega^*_R(x_0)}(-x_1,x')
\\
&-\lambda  u(t,-x_1,x')I_{\Omega^*_R(x_0)}(-x_1,x'),
\end{aligned}
\]
where $\delta_{1 i}=1$ if $i=1$ and $\delta_{1 i}=0$ if $i\neq 1$, and
\begin{equation}
							\label{eq0901_01}
\Omega_R^*(x_0) = \bR^d_- \cap \Omega_R(x_0).
\end{equation}
\end{lemma}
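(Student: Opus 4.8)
The plan is to reflect $u$ across the flat portion of the boundary $\{x_1 = 0\}$ and verify, via the weak formulation, that the reflected function solves the indicated equation on the half-ball $B_R^+(x_0)$. First I would observe that $\Omega_R(x_0)$ decomposes as $B_R^+(x_0) \cup \Omega_R^*(x_0)$ up to a null set, where $\Omega_R^*(x_0) = \bR^d_- \cap \Omega_R(x_0)$ has small measure (it is contained in the thin slab $\{-2\gamma_0 R < x_1 < 0\} \cap B_R(x_0)$ by \eqref{eq0829_03}). The function $u$, as given, is already defined on all of $\cQ = \bR\times\Omega$ and in particular on $\bR\times\Omega_R(x_0)$; the key point is that on $B_R^+(x_0)$ we will rewrite the equation so that the ``extra'' contribution of $u$ on $\Omega_R^*(x_0)$ (the part of $\Omega$ sticking out below $\{x_1=0\}$) is absorbed into the right-hand side data $\tilde h, \tilde g_i, \tilde f$, after an even reflection in $x_1$.

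The main computation is as follows. Fix a test function $\varphi \in H_{\nu'}^{1/2,1}(\bR\times B_R^+(x_0))$ that vanishes on $\bR\times(\partial B_R(x_0)\cap\bR^d_+)$, so that imposing the conormal condition on $\bR\times(\{x_1=0\}\cap\overline{B_R(x_0)})$ amounts to testing \eqref{eq of u on half space} against such $\varphi$. I would extend $\varphi$ to $\bR\times\Omega_R(x_0)$ by even reflection in $x_1$, i.e. set $\bar\varphi(t,x_1,x') = \varphi(t,|x_1|,x')$, and then further extend by zero to a function on $\cQ$ (this is legitimate since $\varphi$ vanishes near $\partial B_R(x_0)\cap\bR^d_+$, so $\bar\varphi$ vanishes near $\partial B_R(x_0)\cap\Omega$, and it is a valid test function in $H_{\nu'}^{1/2,1}(\cQ)$ by the density/flatness remarks earlier). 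Plugging $\bar\varphi$ into \eqref{eq0523_04} for the original equation \eqref{eq0829_01} on $\cQ$ gives an identity of integrals over $\bR\times\Omega_R(x_0)$. I then split each such integral as $\int_{\bR\times B_R^+(x_0)} + \int_{\bR\times\Omega_R^*(x_0)}$, and in the second integral perform the change of variables $x_1 \mapsto -x_1$. Because $D_i\bar\varphi$ picks up the sign $(-1)^{\delta_{1i}}$ under this reflection (the $x_1$-derivative is odd, the $x'$-derivatives are even) and $D_t^{1/2}\bar\varphi$ is even, every term over $\Omega_R^*(x_0)$ becomes an integral over $\{(-x_1,x') : (x_1,x')\in\Omega_R^*(x_0)\}\subset B_R^+(x_0)$, tested against $\varphi$ or its derivatives, with integrand the reflected data multiplied by the appropriate sign and the indicator $I_{\Omega_R^*(x_0)}(-x_1,x')$. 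Matching up: the $a_{ij}D_ju\,D_i\varphi$ term over $B_R^+(x_0)$ is rewritten using $a_{ij} = \bar a_{ij} + (a_{ij}-\bar a_{ij})$, producing the $\overline{\cP}_\lambda$ operator on the left and the $(a_{ij}-\bar a_{ij})D_ju$ contribution to $\tilde g_i$; the reflected pieces of $g_i$, $a_{ij}D_ju$, $h$, $H(D_t^{1/2}u)$, $f$, and $\lambda u$ assemble exactly into the stated $\tilde h, \tilde g_i, \tilde f$. One must also use Remark \ref{rem0902_2} to rewrite the $u_t$ (equivalently, the $-H(D_t^{1/2}u)D_t^{1/2}\varphi$) term consistently on both sides.

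I expect the main obstacle to be the bookkeeping of the reflection — tracking the sign $(-1)^{\delta_{1i}}$ on each $g_i$ and $a_{ij}D_ju$ term, and making sure that the conormal (as opposed to Dirichlet) boundary condition on $\{x_1=0\}$ is exactly what survives: under even reflection the conormal condition is the natural one, since the reflected test functions $\bar\varphi$ are those in $H_{\nu'}^{1/2,1}$ that need not vanish on $\{x_1=0\}$, which is precisely the class that detects the conormal condition there. A secondary technical point is justifying that $\bar\varphi$ is an admissible test function and that $u\in H_\nu^{1/2,1}(\bR\times B_R^+(x_0))$ — the latter is immediate by restriction since $B_R^+(x_0)\subset\Omega_R(x_0)\subset\Omega$. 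I would also note that $\Omega_R^*(x_0)$ having small measure is not needed for this lemma itself (it is only the structural identity that matters here); the smallness will be exploited later when these error terms are estimated and absorbed. Once the weak identity is verified for all admissible $\varphi$, the lemma follows by definition of a solution with the conormal derivative boundary condition on the flat piece.
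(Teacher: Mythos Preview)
Your proposal is correct and follows essentially the same approach as the paper: even-reflect the test function $\varphi$ across $\{x_1=0\}$, extend by zero to $\cQ$, plug into the weak formulation \eqref{eq0523_04} (after writing $a_{ij}=\bar a_{ij}+(a_{ij}-\bar a_{ij})$), split the integral over $\Omega_R(x_0)$ into $B_R^+(x_0)$ and $\Omega_R^*(x_0)$, and change variables $x_1\mapsto -x_1$ in the latter to produce exactly the stated $\tilde h,\tilde g_i,\tilde f$. The only minor remark is that you do not need Remark~\ref{rem0902_2} for the $u_t$ term, since the weak formulation is already expressed via $-H(D_t^{1/2}u)\,D_t^{1/2}\bar\varphi$, and this term over $\Omega_R^*(x_0)$ reflects directly into the $H(D_t^{1/2}u)(t,-x_1,x')I_{\Omega_R^*(x_0)}(-x_1,x')$ contribution to $\tilde h$.
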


\begin{figure}
       \centering
   \begin{tikzpicture}
      %\fill[red!20] (0,1) rectangle (12,5);
      \clip (1,0) rectangle (12,5.5);
      \draw (6,1) circle (3.5);
       \draw (2,2) -- (10,2);
        \draw  (2,1)-- (2.5,0.7)--(3,1.2)--(3.5,1.3)--(4,0.9)--(4.5,0.7)--(5,1.2)--(5.5,0.9)--(6,1)--(6.5,0.6)--(7,1.1)--(7.5,1)--(8,1.1)
        (8,1.1)--(8.5,1)
        (8.5,1)--(9,0.9)--(9.5,0.7)--(10,1);
       \draw (2,0) -- (10,0);
       \draw  (9.5,2) arc[start angle=0, end angle=180, radius=3.5cm];
       \draw  (7.75,2) arc[start angle=0, end angle=180, radius=1.75cm];
       \node at (6,1) {$\bullet$};
       \node at (6.2,1.2) {$x_0$};
       \node at (6,2) {${\bullet}$}; 
       \node at  (6.2,2.3){$\hat{x}_0=0$};
         \node at (11,2) {$x_1=0$};
       \node at (11,0.3) {$x_1=-2\gamma_0 R$};
       \node at (10.4,1) {$\partial \Omega$};
       \node at (6,4) {\small{$B_{R}^{+}(x_0)$}};
       \node at (8.5,1.5) {\small{${\Omega_R^-(x_0)}$}};
       \node at (6,5) {\small{$B_R^+$}};
       \node at (6,3) {\small{$B_{R/2}^+$}};
       \end{tikzpicture}     
       \caption{}
       \label{fig1}
\end{figure}

\begin{proof}
Set
\[
\cQ^+_R = \bR \times B_R^+(x_0), \quad
\cO = \bR^d_+\cap \partial B_R(x_0), \quad \cD = \bR \times \cO.
\]
According to Definition \ref{def0814_2} and Remark \ref{rem1213_1}, we need to verify the integral formulation in \eqref{eq0523_04} with $\cQ$ replaced by $\cQ_R^+$ for test functions $\varphi \in H_{\nu'}^{1/2,1}(\cQ^+_R)$, $1/\nu + 1/\nu' = 1$, such that $\varphi = 0$ on $\cD$.
By the choice of such $\varphi$, we observe that the zero extension $\bar{\varphi}$ of $\varphi$ to $\bR \times \left( \bR^d_+ \setminus B_R^+(x_0) \right)$ belongs to $H_{\nu'}^{1/2,1}(\bR \times \bR^d_+)$.
Denoting again by $\bar{\varphi}$, the even extension of $\bar{\varphi}$ with respect to $x_1$, we see that $\bar{\varphi} \in H_{\nu'}^{1/2,1}(\bR \times \bR^d)$.
In particular, $\bar{\varphi} \in H_{\nu'}^{1/2,1}(\cQ)$.

On the other hand, the equation \eqref{eq0829_01} can be written as
\[
\overline{\cP}_\lambda u = D_t^{1/2}h + D_i\left(g_i + (a_{ij} - \bar{a}_{ij})D_ju\right) + f
\]
in $\cQ$ with the conormal derivative boundary condition on $\bR \times \partial\Omega$.
Then, applying the test function $\bar{\varphi}$ to the above equation, we get
\begin{multline}
							\label{eq0829_02}
\int_{\cQ} \left( - H (D_t^{1/2}u) D_t^{1/2}\bar{\varphi} + \bar{a}_{ij}D_ju D_i \bar{\varphi} + \lambda u \bar{\varphi} \right) \, dX
\\
= \int_{\cQ} \left(h D_t^{1/2}\bar{\varphi} - \left(g_i +  (a_{ij} - \bar{a}_{ij})D_ju \right) D_i \bar{\varphi} + f \bar{\varphi} \right)\, dX.
\end{multline}
Considering the support of $\bar{\varphi}$, the left-hand side (LHS) of \eqref{eq0829_02} can be written as
\[
\text{LHS of \eqref{eq0829_02}} = \int_{\cQ^+_R} \left(- H (D_t^{1/2}u) D_t^{1/2}\varphi + \bar{a}_{ij}D_ju D_i \varphi + \lambda u \varphi\right) \, dX
\]
\[
+ \int_{\bR \times \Omega^*_R(x_0)} \left( - H (D_t^{1/2}u) D_t^{1/2}\bar{\varphi} + \bar{a}_{ij}D_ju D_i \bar{\varphi} + \lambda u \bar{\varphi} \right) \, dX =: J_1 + J_2,
\]
where
\[
J_2 = \int_{\bR \times \bR^d_-}  \left(- H (D_t^{1/2}u) D_t^{1/2}\bar{\varphi} + \bar{a}_{ij}D_ju D_i \bar{\varphi} + \lambda u \bar{\varphi}\right)I_{\Omega_R^*(x_0)}(x_1,x') \, dX
\]
\[
= \int_{\bR \times \bR^d_+} - H (D_t^{1/2}u)(t,-x_1,x') I_{\Omega_R^*(x_0)}(-x_1,x') D_t^{1/2}\varphi(t,x_1,x')  \, dX
\]
\[
+ \int_{\bR \times \bR^d_+}  (-1)^{\delta_{1i}} \bar{a}_{ij}D_ju(t,-x_1,x') I_{\Omega_R^*(x_0)}(-x_1,x') D_i \varphi(t,x_1,x')  \, dX
\]
\[
+ \int_{\bR \times \bR^d_+} \lambda u(t,-x_1,x') I_{\Omega_R^*(x_0)}(-x_1,x') \varphi(t,x_1,x') \, dX
\]
\[
= \int_{\cQ^+_R} \cdots \, dX + \int_{\cQ^+_R} \cdots \, dX + \int_{\cQ^+_R} \cdots \, dX.
\]
The last equality above is due to the fact that the support of $\varphi$ is contained in $\bR \times\overline{B_R^+(x_0)}$.
Similarly to $J_1$ and $J_2$, the right-hand side (RHS) of \eqref{eq0829_02} can be written as
\[
\text{RHS of \eqref{eq0829_02}} = \int_{\cQ^+_R} \left( h D_t^{1/2}\varphi - (g_i+(a_{ij} - \bar{a}_{ij}) D_ju) D_i \varphi + f \varphi\right) \, dX
\]
\[
+ \int_{\cQ^+_R} h(t,-x_1,x')I_{\Omega_R^*(x_0)}(-x_1,x') D_t^{1/2}\varphi(t,x_1,x') \, dX
\]
\[
- \int_{\cQ^+_R} (-1)^{\delta_{1i}} g_i(t,-x_1,x') I_{\Omega_R^*(x_0)}(-x_1,x') D_i\varphi(t,x_1,x') \, dX
\]
\[
- \int_{\cQ^+_R} (-1)^{\delta_{1i}} \left(a_{ij}(t,-x_1,x') - \bar{a}_{ij}(t)\right) D_ju(t,-x_1,x') I_{\Omega_R^*(x_0)}(-x_1,x') D_i\varphi(t,x_1,x') \, dX
\]
\[
+ \int_{\cQ^+_R} f \varphi \, dX + \int_{\cQ^+_R} f(t,-x_1,x') I_{\Omega_R^*(x_0)}(-x_1,x') \varphi(t,x_1,x') \, dX.
\]
From the above observations on both sides of \eqref{eq0829_02},  we see that \eqref{eq0829_02} can be rewritten as
\begin{align*}
&\int_{\cQ^+_R} \left(- H (D_t^{1/2}u) D_t^{1/2}\varphi + \bar{a}_{ij}D_ju D_i \varphi + \lambda u \varphi\right) \, dX\\
&= \int_{\cQ^+_R} 
\left(\tilde{h} D_t^{1/2}\varphi - \tilde{g}_i D_i \varphi + \tilde{f} \varphi \right) \, dX.
\end{align*}
This implies the assertion of the lemma.
\end{proof}

\begin{proposition}
\label{prop reifen} 
Let $\lambda \in (0,\infty)$, $p \in (1,\infty)$, $\nu \in(p,\infty)$, $\nu'=\nu p/(\nu-p)$, $R\in(0,R_0]$, $\cQ=\bR\times \Omega$, and $p_{1}\in (p,\infty)$. Assume that  $u \in H_\nu^{1/2,1}(\cQ) \cap H_p^{1/2,1}(\cQ)$ satisfies
\begin{equation*}%\label{eq on reifen}
    \cP_\lambda u=D_t^{1/2} h +D_ig_i+f \quad \text{in $\cQ$}
\end{equation*}
with the conormal derivative boundary condition on $\bR\times {\partial\Omega}$, where $h, g_i, f \in L_\nu(\cQ) \cap L_p(\cQ)$.
Denote
\begin{equation}
							\label{eq0108_03}
\cU = (Du,\sqrt{\lambda}u), \quad F = (h,g_i, \lambda^{-1/2}f).
\end{equation}
Then, under Assumption \ref{assum rei coeffi} ($\gamma$), there exists $N=N(d,\delta,p,\nu, p_{1})$ such that the following hold true. 
\begin{itemize}
\item[(i)] 
For $X_0=(t_0,x_0)\in \bR\times \Omega$ with $\operatorname{dist}(x_0,\partial\Omega)\ge R$, there exist vector-valued functions $\cW$ and $\cV$ in $Q_R(X_0)$ such that $\cU = \cW + \cV$ in $Q_R(X_0)$, where $\cW$ and $\cV$ satisfy 
\begin{align*}
\left(|\cW|^p\right)_{Q_R(X_0)}^{1/p}
&\le N\gamma_0 ^{1/\nu'}\sum_{j=0}^\infty2^{-j/4}\left(|\cU|^\nu\right)^{1/\nu}_{Q_{2^{j/2}R,R}(X_0)}\\
&\quad + N\sum_{j=0}^\infty2^{-j/4}\left(|F|^p\right)^{1/p}_{Q_{2^{j/2}R,R}(X_0)},
\end{align*}
and
\begin{align*}
\left(|\cV|^{p_{1}}\right)_{Q_{R/8}(X_0)}^{1/p_{1}} &\le N\gamma_0^{1/\nu'}\sum_{j=0}^\infty2^{-j/4}\left(|\cU|^\nu\right)^{1/\nu}_{Q_{2^{j/2}R,R}(X_0)} \\
&\quad + N\sum_{j=0}^\infty2^{-j/4}\left(|F|^p\right)^{1/p}_{Q_{2^{j/2}R,R}(X_0)} +N\left(|\cU|^p\right)^{1/p}_{Q_R(X_0)}.
\end{align*}

\item[(ii)]
For $X_0=(t_0,x_0)\in \bR\times \partial\Omega$,
there exist vector-valued functions $\cW$ and $\cV$ in $C_R(X_0)$ such that $\cU = \cW + \cV$ in $C_R(X_0)$, where $\cW$ and $\cV$ satisfy
\begin{equation}
							\label{eq0108_01}
\begin{aligned}
\left(|\cW|^p\right)_{C_R(X_0)}^{1/p}
&\le N\gamma_0^{1/\nu'}\sum_{j=0}^\infty  2^{-j/4}\left(|\cU|^\nu\right)^{1/\nu}_{C_{2^{j/2}R,R}(X_0)}
\\
&+ N\sum_{j=0}^\infty  2^{-j/4}\left(|F|^p\right)^{1/p}_{C_{2^{j/2}R,R}(X_0)}
\\
+ N\gamma_0^{1/\nu'}&\sum_{j=0}^\infty  2^{-j/4}\left(|H(D_t^{1/2}u)|^\nu\right)^{1/\nu}_{C_{2^{j/2}R,R}(X_0)},
\end{aligned}
\end{equation}
and
\begin{equation}
							\label{eq0108_02}
\begin{aligned}
\left(|\cV|^{p_{1}}\right)^{1/p_{1}}_{C_{R/8}(X_0)}\le N\gamma_0^{1/\nu'}\sum_{j=0}^\infty  2^{-j/4}\left(|\cU|^{\nu}\right)^{1/\nu}_{C_{2^{j/2}R,R}(X_0)}
\\
+ N\sum_{j=0}^\infty  2^{-j/4}\left(|F|^p\right)^{1/p}_{C_{2^{j/2}R,R}(X_0)} + N\left(|\cU|^p\right)^{1/p}_{C_R(X_0)}
\\
+ N\gamma_0^{1/\nu'}\sum_{j=0}^\infty  2^{-j/4}\left(|H(D_t^{1/2}u)|^\nu\right)^{1/\nu}_{C_{2^{j/2}R,R}(X_0)}.
\end{aligned}
\end{equation}
\end{itemize}
\end{proposition}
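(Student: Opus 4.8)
The plan is to reduce each case to the half-space results of Section \ref{sec auxiliary} via a decomposition $\cU = \cW + \cV$, where $\cW$ collects the ``error'' contributions coming from the oscillation of the coefficients, the size of $\Omega^*$, and the inhomogeneous data, while $\cV$ is built from a homogeneous solution and therefore enjoys a higher integrability exponent $p_1$. For the interior part (i), I would proceed essentially as in \cite{MR2771670}: freeze the coefficients in $x$ to $\bar a_{ij}(t) = (a_{ij}(t,\cdot))_{B_R(x_0)}$, solve $\overline{\cP}_\lambda w = D_t^{1/2}h + D_i\big(g_i + (a_{ij} - \bar a_{ij})D_j u\big) + f$ in a large cylinder with the same lateral boundary (here the whole space, since $\operatorname{dist}(x_0,\partial\Omega) \ge R$), and set $v = u - w$ so that $\overline{\cP}_\lambda v = 0$ near $X_0$. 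The $L_p$-estimate from Proposition \ref{prop0105_1} (applied with the localization device of Lemma \ref{lem0106_1}, whence the $\sum_j 2^{-j/4}$ tails over expanding time-intervals) controls $\cW := (Dw, \sqrt\lambda w)$; the only subtle point is the term $(a_{ij} - \bar a_{ij})D_j u$, which by Assumption \ref{assum rei coeffi}, Remark \ref{rem0902_1}, and H\"older's inequality with exponents $\nu/p$ and $\nu'/p$ yields the factor $\gamma_0^{1/\nu'}$ multiplying $(|\cU|^\nu)^{1/\nu}$. The higher integrability of $\cV := (Dv,\sqrt\lambda v)$ on $Q_{R/8}(X_0)$ is then exactly the interior self-improvement estimate \eqref{eq0114_01}.

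For the boundary part (ii), which is the heart of the matter, I would apply Lemma \ref{lem0901_1} at $X_0 = (t_0,x_0) \in \bR \times \partial\Omega$: using the Reifenberg flatness in Assumption \ref{assum rei coeffi}, after rotation and translation the domain is squeezed between the half-space $\{y_1 > 0\}$ and $\{y_1 > -2\gamma_0 R\}$, and Lemma \ref{lem0901_1} shows that $u$ satisfies $\overline{\cP}_\lambda u = D_t^{1/2}\tilde h + D_i \tilde g_i + \tilde f$ in $\bR \times B_R^+(x_0)$ with the conormal condition on the flat piece, where the tilde data carry both the genuine data reflected across $\{x_1 = 0\}$ and, crucially, the reflected terms $H(D_t^{1/2}u)\,I_{\Omega_R^*(x_0)}$, $(a_{ij} D_j u)\,I_{\Omega_R^*(x_0)}$, $\lambda u\,I_{\Omega_R^*(x_0)}$, and $(a_{ij} - \bar a_{ij})D_j u$ supported in the thin set $\Omega_R^*(x_0)$. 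I would then solve the half-space problem $\overline{\cP}_\lambda w = D_t^{1/2}\tilde h + D_i \tilde g_i + \tilde f$ in $\bR \times B_R^+$ (extending $B_R^+(x_0)$ to $B_R^+$ as noted after \eqref{eq0829_03}) with the conormal condition, using Proposition \ref{prop1211_1}(i) together with the localized version Lemma \ref{lem0106_1} on $\Pi_R$-type domains (hence again the $2^{-j/4}$ time tails). Estimating $\cW := (Dw,\sqrt\lambda w)$: the reflected genuine data $h, g_i, f$ give the $\sum_j 2^{-j/4}(|F|^p)^{1/p}$ term; the terms supported in $\Omega_R^*(x_0)$, whose measure is $\lesssim \gamma_0 |B_R|$ by \eqref{eq0829_03}, contribute via H\"older with exponent $\nu/p$ the factor $\gamma_0^{1/\nu'}$ times $(|\cU|^\nu)^{1/\nu}$ or $(|H(D_t^{1/2}u)|^\nu)^{1/\nu}$ (this is where the third line of \eqref{eq0108_01} originates); and the coefficient-oscillation term $(a_{ij} - \bar a_{ij})D_j u$ gives another $\gamma_0^{1/\nu'}(|\cU|^\nu)^{1/\nu}$ contribution as in case (i). Setting $v = u - w$, one checks $\overline{\cP}_\lambda v = 0$ in a smaller half-cylinder $Q_{R/2}^+(X_0)$ with the conormal condition on the flat boundary, so $\cV := (Dv,\sqrt\lambda v)$ obeys the boundary self-improvement Lemma \ref{lem half cono local lambda}, giving $(|\cV|^{p_1})_{Q_{R/8}(X_0)}^{1/p_1} \lesssim (|\cV|^p)_{Q_{R/2}^+(X_0)}^{1/p} \lesssim (|\cU|^p)^{1/p}_{C_R(X_0)} + (|\cW|^p)^{1/p}$, and combining with the bound for $\cW$ yields \eqref{eq0108_02}. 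Finally I would observe $C_R(X_0) \subset \bR \times B_R^+(x_0)$ under the flatness, so all estimates transfer back to $\cU$ on $C_R(X_0)$.

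The main obstacle I anticipate is bookkeeping around the half-time derivative term. Unlike in the time-regular theory, the reflected ``source'' $H(D_t^{1/2}u)\,I_{\Omega_R^*(x_0)}$ appears inside a $D_t^{1/2}$ and must be treated using the $H_p^{1/2,1}$-solvability of Proposition \ref{prop1211_1}/Lemma \ref{lem0106_1} rather than the $\cH_p^1$-theory — this is precisely why the hypotheses carry $u \in H_\nu^{1/2,1} \cap H_p^{1/2,1}$ and why $(|H(D_t^{1/2}u)|^\nu)^{1/\nu}$ has to be carried as an independent term on the right-hand side of \eqref{eq0108_01}–\eqref{eq0108_02} (it will later be absorbed using the $L_p$-boundedness of $H$ and the smallness of $|\Omega^*|$, but not at this stage). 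A secondary technical point is that Lemma \ref{lem half cono local lambda} requires $v \in \cH_p^1$ of a half-cylinder, so one must verify via Remark \ref{rem0902_2} and Remark \ref{rem0903_1} that $v$, built as a difference of $H_p^{1/2,1}$ functions solving the equation, indeed lies in the right space with $v_t \in \bH_p^{-1}$; this is routine but needs to be stated. The $\gamma_0^{1/\nu'}$ gain, which is what ultimately permits the absorption argument in the next section, comes uniformly from the two mechanisms — the thinness $|\Omega_R^*(x_0)| \lesssim \gamma_0|B_R|$ and the mean-oscillation bound on $a_{ij} - \bar a_{ij}$ — so I would make sure both H\"older estimates are carried out with the same pair of exponents $(\nu/p, \nu'/p)$.
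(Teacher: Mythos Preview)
Your approach matches the paper's closely: apply Lemma \ref{lem0901_1} to obtain the reflected equation on a half-ball, solve an auxiliary problem for $w$ with the localization of Lemma \ref{lem0106_1}, set $v=u-w$, and invoke Lemma \ref{lem half cono local lambda} for the higher integrability of $\cV$. Two points, however, need correction.

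First, a minor one: the auxiliary problem for $w$ is solved on $\bR\times\Pi_R$ with the \emph{mixed} boundary condition (conormal on $\{x_1=0\}$, zero Dirichlet on $\{x_1=R\}$), i.e.\ Proposition \ref{prop1211_1}(ii), not (i). This is exactly the setting of Lemma \ref{lem0106_1}, whose proof uses the Poincar\'e inequality coming from the Dirichlet side; solving instead on $\bR\times\bR^d_+$ with only a conormal condition would not give you the localized $2^{-j/4}$ tails directly.

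Second, and more substantively, your closing observation ``$C_R(X_0)\subset\bR\times B_R^+(x_0)$'' is false: the thin strip $\Omega_R^*(x_0)=\Omega_R(x_0)\cap\{x_1<0\}$ lies in $C_R(X_0)$ but outside $B_R^+(x_0)$, so $w$ and $v$ are simply undefined there and the decomposition $\cU=\cW+\cV$ does not yet make sense on all of $C_R(X_0)$. The paper handles this by defining $\cW$ and $\cV$ \emph{piecewise}: on $C_R(X_0)\cap\{x_1>0\}$ set $\cW=(Dw,\sqrt\lambda w)$ and $\cV=(Dv,\sqrt\lambda v)$, while on $C_R(X_0)\cap\{x_1\le0\}$ set $\cW=\cU$ and $\cV=0$. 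The extra contribution $|C_R(X_0)|^{-1}\int_{C_R^*(X_0)}|\cU|^p$ to $(|\cW|^p)_{C_R(X_0)}$ is then controlled by the same H\"older-plus-thinness argument you already describe, producing one more $\gamma_0^{1/\nu'}(|\cU|^\nu)^{1/\nu}_{C_R(X_0)}$ term that is absorbed into the right-hand side of \eqref{eq0108_01}. With these two fixes your outline becomes the paper's proof.
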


\begin{proof}
Since case (i) is relatively easier, our focus here is on case (ii).
Due to Assumption \ref{assum rei coeffi}, we assume that \eqref{eq0829_03} is satisfied.
For convenience of notation, 
we further assume that, by translation, the last $d-1$ coordinates of $x_0$ are zero and $t_0 = 0$.
In particular, the origin is located at $\hat{x}_0$ (see Figure \ref{fig1}), i.e., $\hat{x}_0 = 0 \in \bR^d$.

By Lemma \ref{lem0901_1}, $u \in H_\nu^{1/2,1}\left(\bR \times B_R^+(x_0)\right) \cap H_p^{1/2,1}\left(\bR \times B_R^+(x_0)\right)$ satisfies \eqref{eq of u on half space} with the boundary condition and the functions
\[
\tilde{h}, \tilde{g}_i, \tilde{f} \in L_\nu\left(\bR \times B_R^+(x_0)\right) \cap L_p\left(\bR \times B_R^+(x_0)\right)
\]
described in the lemma.
We note that $B^+_R(x_0)\subset B_R^+$.
We assign the value zero to $\tilde{h}$, $\tilde{g}$, and $\tilde{f}$ on $\bR\times (\Pi_R\setminus B_R^+(x_0))$.
Since $\bar{a}_{ij}(t)$ satisfy Assumption \ref{assum0103_1}, using Proposition \ref{prop1211_1}, we find $w \in H_\nu^{1/2,1}(\bR \times \Pi_R) \cap H_p^{1/2,1}(\bR \times \Pi_R)$ satisfying
\begin{equation}
							\label{eq0107_01}
\overline{\cP}_\lambda w = D_t^{1/2}\tilde{h} + D_i \tilde{g}_i + \tilde{f} \quad \text{in} \,\, \bR \times \Pi_R
\end{equation}
with the conormal derivative condition on $\bR \times \{(0,x'): x' \in \bR^{d-1}\}$ and the zero Dirichlet boundary condition on $\{(R,x'): x' \in \bR^{d-1}\}$.
Applying Lemma \ref{lem0106_1} to \eqref{eq0107_01} along with fact that $\tilde{h}, \tilde{g}_i, \tilde{f}$ on $\bR \times (\Pi_R \setminus B_R^+)$ are zero, we have
\begin{equation}
      \label{eq0107_03}
\left(|Dw|^p + \lambda^{p/2}|w|^p\right)^{1/p}_{Q_R^+} \le N\sum_{j=0}^\infty  2^{-j/4}\left(|\tilde{F}|^p\right)^{1/p}_{Q_{2^{j/2}R,R}^+ },
\end{equation}
where $\tilde{F}=(\tilde{h},\tilde{g}, \lambda^{-1/2}\tilde{f})$.
By H\"{o}lder's inequality on the terms in $\tilde{F}$ and the smallness of the $d$-dimensional Lebesgue measure of $\Omega^*_R(x_0)$ defined in \eqref{eq0901_01}, we obtain 
\begin{multline}
    \label{eq0107_02}
\left(|Dw|^p + \lambda^{p/2}|w|^p\right)^{1/p}_{Q_R^+} \le N\gamma_0^{1/\nu'}\sum_{j=0}^\infty  2^{-j/4}\left(|\cU|^\nu\right)^{1/\nu}_{C_{2^{j/2}R,R}(X_0)}
\\
+ N\sum_{j=0}^\infty  2^{-j/4}\left(|F|^p\right)^{1/p}_{C_{2^{j/2}R,R}(X_0)}
\\
+ N\gamma_0^{1/\nu'}\sum_{j=0}^\infty  2^{-j/4}\left(|H(D_t^{1/2}u)|^\nu\right)^{1/\nu}_{C_{2^{j/2}R,R}(X_0)}.
\end{multline}
The detailed derivation from \eqref{eq0107_03} to \eqref{eq0107_02} is provided right after the proof of this proposition (see Proof of \eqref{eq0107_02} below).

Now, we set $v=u-w$ in $\bR\times B_{R}^+(x_0)$.
Then, since $u \in H_p^{1/2,1}(\cQ)$ and $w \in H_{p}^{1/2,1}(\bR \times \Pi_R)$, it follows that $v \in H_p^{1/2,1}(\bR \times B_R^+(x_0))$.
In addition, because $u$ satisfies \eqref{eq of u on half space} with the conormal derivative boundary condition on $\bR\times (\{x_1=0\}\cap \overline{B_R(x_0)})$, and $w$ satisfies \eqref{eq0107_01} with the conormal derivative boundary condition on $\bR \times \{(0,x'): x' \in \bR^{d-1}\}$, the function $v$, as an element of $H_p^{1/2,1}(\bR \times B_{R/2}^+)$ (noting that $B_{R/2}^+ \subset B_R^+(x_0)$), satisfies (see Definition \ref{def0814_2} and Remark \ref{rem1213_1})
\begin{equation*}%    \label{eq3 reifen}
\overline{\cP}_\lambda v=0 \quad \text{in} \,\,\bR\times B_{R/2}^+
\end{equation*}
with the conormal derivative boundary condition on $\bR\times\Gamma_{R/2}$.

We now define $\cW$ and $\cV$ on $C_R(X_0)$ as follows:
\[
\cW = \left\{
\begin{aligned}
(Dw, \sqrt{\lambda}w) \quad &\text{for} \,\, C_R(X_0) \cap \{(t,x_1,x') \in \bR \times \bR^d : x_1 > 0\},
\\
\cU \quad &\text{for} \,\, C_R(X_0) \cap \{(t,x_1,x') \in \bR \times \bR^d : x_1 \leq 0\},
\end{aligned}
\right.
\]
\[
\cV = \left\{
\begin{aligned}
(Dv, \sqrt{\lambda}v) \quad &\text{for} \,\, C_R(X_0) \cap \{(t,x_1,x') \in \bR \times \bR^d : x_1 > 0\},
\\
(0, 0) \quad &\text{for} \,\, C_R(X_0) \cap \{(t,x_1,x') \in \bR \times \bR^d : x_1 \leq 0\}.
\end{aligned}
\right.
\]
Because $u = w+v$ on $\bR \times B_R^+(x_0)$, we see that $\cU = \cW + \cV$ on $C_R(X_0)$.
Moreover, by H\"older's inequality and \eqref{eq0107_02},
\[
(|\cW|^p)_{C_R(X_0)}^{1/p}
= 
\left(\frac{1}{|C_R(X_0)|}\int_{Q_R^+(X_0)}|\cW|^p+\frac{1}{|C_R(X_0)|}\int_{C_R^*(X_0)}|\cU|^p\right)^{1/p}
\]
\[
\le N(|\cW|^p)_{Q_R^+}^{1/p}+N(I_{C_R^*(X_0)})^{1/\nu'}_{C_R(X_0)}(|\cU|^\nu)^{1/\nu}_{C_R(X_0)}
\]
\[
\le N(|\cW|^p)_{Q_R^+}^{1/p}+N\gamma_0^{1/\nu'}(|\cU|^\nu)^{1/\nu}_{C_R(X_0)}
\]
\[
\le N\gamma_0^{1/\nu'}\sum_{j=0}^\infty  2^{-j/4}\left(|\cU|^\nu\right)^{1/\nu}_{C_{2^{j/2}R,R}(X_0)} + N\sum_{j=0}^\infty  2^{-j/4}\left(|F|^p\right)^{1/p}_{C_{2^{j/2}R,R}(X_0)}
\]
\[
+ N\gamma_0^{1/\nu'}\sum_{j=0}^\infty  2^{-j/4}\left(|H(D_t^{1/2}u)|^\nu\right)^{1/\nu}_{C_{2^{j/2}R,R}(X_0)},
\]
where $C_R^*(X_0)=(-R^2,R^2)\times \Omega_R^*(x_0)$. This proves the estimate \eqref{eq0108_01}.

Next, we apply Lemma \ref{lem half cono local lambda} (also see Remark \ref{rem0903_1}) to $v$ to derive (noting that $\cV = \cU - \cW$ in $Q_{R/2}^+$)
\[
(|\cV|^{p_{1}})^{1/p_{1}}_{Q^+_{R/8}}
\le N(|\cV|^p)^{1/p}_{Q^+_{R/2}} 
\le N(|\cU|^p)^{1/p}_{Q^+_{R/2}}+N(|\cW|^p)^{1/p}_{Q^+_{R/2}}.  
\]
Then, by exploiting H\"older's inequality and \eqref{eq0107_02}, we obtain that 
\[
(|\cV|^{p_{1}})^{1/p_{1}}_{Q^+_{R/8}}\le N(|\cU|^p)^{1/p}_{C_R(X_0)} + N\gamma_0^{1/\nu'}\sum_{j=0}^\infty  2^{-j/4}\left(|\cU|^\nu\right)^{1/\nu}_{C_{2^{j/2}R,R}(X_0)}
\]
\[
+ N\sum_{j=0}^\infty  2^{-j/4}\left(|F|^p\right)^{1/p}_{C_{2^{j/2}R,R}(X_0)} + N\gamma_0^{1/\nu'}\sum_{j=0}^\infty  2^{-j/4}\left(|H(D_t^{1/2}u)|^\nu\right)^{1/\nu}_{C_{2^{j/2}R,R}(X_0)}.   
\]
From this with the fact that $\cV = (0, 0)$ on $(-R^2,R^2) \times \Omega^*_R(x_0)$, we see that \eqref{eq0108_02} holds.
The proposition is proved.
\end{proof}

\begin{proof}[Proof of \eqref{eq0107_02}]
Recall the definition of $\Omega^*_R(x_0)$ in \eqref{eq0901_01}, which satisfies
\[
\Omega_R^*(x_0) = \Omega_R(x_0)\cap \{x:-2\gamma_0 R<x_1<0\},
\]
where
\begin{equation}
							\label{eq0107_04}
|\Omega_R(x_0)\cap \{x:-2\gamma_0 R<x_1<0\}| \leq N \gamma R^d.
\end{equation}
Also observe that
\[
|B_R^+|\le N|B_R^+(x_0)|, \quad |\Omega_R(x_0)|\le N|B_R^+|.
\]
By the definition of $\tilde{h}$ in Lemma \ref{lem0901_1} (in fact, $\tilde{h}$ in \eqref{eq0107_01} is the one extended as zero on $\bR \times (B_R^+ \setminus B_R^+(x_0)$), along with the above observations, we have
\[
(|\tilde{h}|^p)^{1/p}_{Q_{2^{j/2} R,R}^+} \le N(|{h}|^p)^{1/p}_{C_{2^{j/2}R,R}(X_0)} + N(|{h}I_{\{-2\gamma_0 R<x_1<0\}}|^p )^{1/p}_{C_{2^{j/2} R,R}(X_0)}
\]
\[
+N(|{(H(D_t^{1/2}u))}I_{\{-2\gamma_0 R<x_1<0\}}|^p )^{1/p}_{C_{2^{j/2} R,R}(X_0)}
\]
\[
\le N(|{h}|^p)^{1/p}_{C_{2^{j/2} R,R}(X_0)} + N\gamma_0^{1/\nu'}(|H(D_t^{1/2}u)|^\nu )^{1/\nu}_{C_{2^{j/2} R,R}(X_0)},
\]
where we used H\"older's inequality and \eqref{eq0107_04} to get the last inequality.
We follow the same calculation as above to obtain
\begin{multline}
							\label{eq0107_05}
(|\tilde{g}|^p)^{1/p}_{Q_{2^{j/2} R,R}^+}
\le N(|g|^p)^{1/p}_{C_{2^{j/2} R,R}(X_0)} + N(|a_{ij}-\bar{a}_{ij}|^p |Du|^p)^{1/p}_{C_{2^{j/2} R,R}(X_0)}
\\
+N\gamma_0^{1/\nu'}(|{Du}|^\nu )^{1/\nu}_{C_{2^{j/2} R,R}(X_0)},
\end{multline}
where by H\"older's inequality, $|a_{ij}|\le \delta^{-1}$, and 
Assumption \ref{assum rei coeffi} (see \eqref{coeffi bmo} and Remark \ref{rem0902_1}), we see that 
\[
(|a_{ij}-\bar{a}_{ij}|^p|Du|^p)_{C_{2^{j/2}R,R}(X_0)}^{1/p}
\le (|a_{ij}-\bar{a}_{ij}|^{\nu'})_{C_{2^{j/2}R,R}(X_0)}^{1/\nu'}
(|Du|^\nu)_{C_{2^{j/2}R,R}(X_0)}^{1/\nu}
\]
\[
\le N(|a_{ij}-\bar{a}_{ij}|)_{C_{2^{j/2}R,R}(X_0)}^{1/\nu'} (|Du|^\nu)_{C_{2^{j/2}R,R}(X_0)}^{1/\nu} \le
N\gamma_0^{1/\nu'} (|Du|^\nu)_{C_{2^{j/2}R,R}(X_0)}^{1/\nu}.
\]
Hence, the second term on the right-hand side of \eqref{eq0107_05} is bounded by the last term.
In the same manner, we have 
\[
 (|\tilde{f}|^p)^{1/p}_{Q_{2^{j/2} R,R}^+}
\le N(|{f}|^p )^{1/p}_{C_{2^{j/2} R,R}(X_0)}
  +N{\lambda}\gamma^{1/\nu'}(|{u}|^\nu )^{1/\nu}_{C_{2^{j/2} R,R}(X_0)}.
\]
Combining the above estimates for $\tilde{h}$, $\tilde{g}$, and $\tilde{f}$ with the inequality \eqref{eq0107_03}, we finally obtain \eqref{eq0107_02}.
\end{proof}

\subsection{Half-derivative estimates for the heat equation on a Reifenberg-flat domain}

In this subsection, we obtain estimates for the heat equation on a Reifenberg-flat domain. 
\begin{lemma}
							\label{lem0906_1}
Let $p \in (1,\infty)$.
There exists $N = N(d,p)$ such that, for any $\lambda \geq 0$,
\[
\||D_t^{1/2}u| + |Du| + \sqrt{\lambda}|u|\|_{L_\infty(Q_R)} \leq N \left(|D_t^{1/2}u|^p + |Du|^p + \lambda^{p/2}|u|^p\right)^{1/p}_{Q_{4R}},
\]
provided that $u \in H_p^{1/2,1}(\bR \times B_{4R})$, and
\[
u_t - \Delta u + \lambda u = 0
\]
in $\bR \times B_{4R}$.
\end{lemma}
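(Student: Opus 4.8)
To prove Lemma~\ref{lem0906_1}, the plan is to reduce everything to a single interior $L_\infty$--$L_p$ bound for solutions of the model equation $w_t-\Delta w+\lambda w=0$, applied in turn to the half-time derivative of $u$, to the spatial gradient, and to $u$ itself. First I would note that, by interior hypoellipticity of the heat operator, any $u$ as in the statement is $C^\infty$ on $\bR\times B_{4R}$, so that $D_t^{1/2}u$ is well defined there; and since $D_t^{1/2}$ acts as a Fourier multiplier in the time variable alone, it commutes with $\partial_t$, with $\Delta$, and with multiplication by the constant $\lambda$. Hence each of the $d+1$ functions $D_t^{1/2}u,\,D_1u,\ldots,D_du$, as well as $u$ itself, satisfies
\[
w_t-\Delta w+\lambda w=0 \quad\text{in } \bR\times B_{4R}.
\]
I would justify the commutation (and in particular the fact that $D_t^{1/2}u$ solves the same equation) via the Fourier transform in $t$, in the spirit of \cite[Lemmas~3.4, 3.6(4), 3.7]{MR4920684} together with Remark~\ref{rem0902_2}. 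Note that no Agmon-type dimension-raising argument is needed here, since the zeroth-order coefficient $\lambda\ge 0$ has a favorable sign.

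The single estimate I would use is the following: if $w$ solves $w_t-\Delta w+\lambda w=0$ in $\bR\times B_{4R}$ with $\lambda\ge 0$, then
\[
\|w\|_{L_\infty(Q_R)}\le N(d,p)\,(|w|^p)_{Q_{4R}}^{1/p},
\]
with $N$ \emph{independent of} $\lambda$. This follows by observing that $w^2$ is a nonnegative subsolution of the pure heat equation: a direct computation gives $(w^2)_t-\Delta(w^2)=-2\lambda w^2-2|Dw|^2\le 0$. Then the classical interior local maximum estimate for nonnegative parabolic subsolutions (with exponent $p/2$), combined with a standard covering of $Q_R$ by parabolic cylinders whose concentric doubles lie in $\bR\times B_{4R}$, yields the bound. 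Alternatively, one may invoke the $p_1=\infty$ version of the interior estimate established at the end of Section~\ref{sec auxiliary} (see the remark following it and \cite[Lemma~4]{MR2771670}), which already delivers the estimate for $w=u$ and for $w=D_ku$.

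Finally, I would apply this estimate to $w=D_t^{1/2}u$, to $w=D_ku$ for $k=1,\ldots,d$, and to $w=u$; multiplying the last one by $\sqrt{\lambda}$, summing the $d+2$ resulting inequalities, and using the elementary bound $\sum_i(|w_i|^p)_{Q_{4R}}^{1/p}\le N\big((\textstyle\sum_i|w_i|^p)_{Q_{4R}}\big)^{1/p}$ (valid for finitely many functions since $p\ge 1$), one arrives at the asserted estimate. The only step that is not entirely routine is the first one, namely verifying rigorously that $D_t^{1/2}u$ is again a solution of $w_t-\Delta w+\lambda w=0$ despite $D_t^{1/2}$ being nonlocal in time; I expect this to be the main technical point, and it is handled by the Fourier transform in $t$ (equivalently, by the identity relating $H(D_t^{1/2}u)$ to $u_t$ from Remark~\ref{rem0902_2}).
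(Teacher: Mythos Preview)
Your approach is correct and more self-contained than the paper's, which simply cites \cite[Lemma~5.5]{MR4920684} for $p=2$, notes that the same proof extends to general $p\in(1,\infty)$, and recalls the elementary bound $\|f\|_{L_\infty}\le N[f]_{\mathrm{Lip}}+N(|f|^p)^{1/p}$; so the paper's route implicitly passes through a Lipschitz/H\"older oscillation estimate rather than a direct $L_\infty$--$L_p$ bound. Your path---observing that each of $D_t^{1/2}u$, $D_ku$, $u$ again solves the equation, and then using that $(w^2)_t-\Delta(w^2)=-2\lambda w^2-2|Dw|^2\le0$ so $w^2$ is subcaloric---is a clean alternative that exploits the sign $\lambda\ge0$ to avoid any Agmon lift and reduces everything to the classical local maximum principle. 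The one step you correctly flag as nonroutine, that $D_t^{1/2}u$ is again a solution in $\bR\times B_{4R}$, is fine: $D_t^{1/2}$ is self-adjoint and commutes with $\partial_t$, $\Delta$, and multiplication by $\lambda$, so testing the weak formulation against $D_t^{1/2}\varphi$ for $\varphi\in C_0^\infty(\bR\times B_{4R})$ transfers the equation to $D_t^{1/2}u$; the point to check is only that $D_t^{1/2}\varphi$, while not compactly supported in $t$, still belongs to $H_{p'}^{1/2,1}(\bR\times B_{4R})$ and retains compact spatial support away from $\partial B_{4R}$, which is immediate from the decay of the kernel.
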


\begin{proof}
Thanks to scaling, we assume $R = 1$.
Then, the lemma follows from \cite[Lemma 5.5]{MR4920684} for $p=2$.
Moreover, the same proof applies for $p \in (1,\infty)$. Recall also that the $L_\infty$-norm can be bounded by the sum of the Lipschitz semi-norm and the $L_p$-norm.
\end{proof}

\begin{lemma}
					\label{lem half cono local lambda laplacian}
Let $p \in (1,\infty)$, $\lambda \geq 0$, and $u\in H_p^{1/2,1}(\bR\times B_{4R}^+)$ satisfy
\[
u_t - \Delta u + \lambda u = 0 \quad \text{in} \,\, \bR \times B_{4R}^+
\]
with the conormal derivative boundary condition on $\bR\times \Gamma_{4R}$.
Then, we have
\[
\||D_t^{1/2}u| + |Du| + \sqrt{\lambda}|u|\|_{L_\infty(Q_R^+)} \leq N \left(|D_t^{1/2}u|^p + |Du|^p + \lambda^{p/2}|u|^p\right)^{1/p}_{Q_{4R}^+},
\]
where $N=N(d,p)$.
\end{lemma}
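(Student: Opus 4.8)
The plan is to remove the boundary by an even reflection across the flat face $\Gamma_{4R}=\{x_1=0\}$ and then invoke the interior estimate in Lemma \ref{lem0906_1}; this is the same mechanism already used for the conormal problem in Proposition \ref{prop1211_1} and Lemma \ref{lem half cono local}, the only new point being that the reflection must also be carried out at the level of the half-time derivative. Concretely, set $\bar u(t,x_1,x'):=u(t,|x_1|,x')$ for $(t,x)\in\bR\times B_{4R}$. Since the spatial reflection $x_1\mapsto -x_1$ commutes with both $D_t^{1/2}$ and the Hilbert transform $H$ (these act only in $t$), $D_t^{1/2}\bar u$ is the even-in-$x_1$ extension of $D_t^{1/2}u$, while $D_1\bar u$ is the odd extension of $D_1u$ and $(D_2\bar u,\dots,D_d\bar u)$ is the even extension of $(D_2u,\dots,D_du)$, so that no distributional jump is created across $\{x_1=0\}$ — this is precisely where the conormal (rather than Dirichlet) condition enters. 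Hence
\[
|D_t^{1/2}\bar u(t,x)|=|D_t^{1/2}u(t,|x_1|,x')|,\quad |D\bar u(t,x)|=|Du(t,|x_1|,x')|,\quad |\bar u(t,x)|=|u(t,|x_1|,x')|,
\]
so that $\bar u\in H_p^{1/2,1}(\bR\times B_{4R})$, the averages over $Q_{4R}$ of the reflected quantities coincide with the corresponding averages over $Q_{4R}^+$, and the $L_\infty(Q_R)$ norm of the reflected left-hand side equals the $L_\infty(Q_R^+)$ norm of the original one.

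The heart of the argument is to verify that $\bar u$ solves $\bar u_t-\Delta\bar u+\lambda\bar u=0$ in $\bR\times B_{4R}$ in the interior weak sense. I would test against an arbitrary $\varphi\in C_0^\infty(\bR\times B_{4R})$, which suffices by density (cf.\ the remark following Definition \ref{def0814_2}, $B_{4R}$ being smooth), and split $\varphi=\varphi_e+\varphi_o$ into its even and odd parts in $x_1$. Each term appearing when one tests against $\varphi_o$, namely $-H(D_t^{1/2}\bar u)D_t^{1/2}\varphi_o$, $D_1\bar u\,D_1\varphi_o$, $(D_2\bar u,\dots,D_d\bar u)\cdot(D_2\varphi_o,\dots,D_d\varphi_o)$, and $\lambda\bar u\,\varphi_o$, is odd in $x_1$ — using the parities recorded above and the fact that $\partial_{x_1}$ flips parity — hence integrates to zero over the symmetric domain. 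Thus only $\varphi_e$ contributes; its restriction to $\{x_1>0\}$ is a legitimate test function for the conormal condition on $\bR\times\Gamma_{4R}$ (smooth, compactly supported away from $\partial B_{4R}\cap\bR^d_+$, not required to vanish on $\Gamma_{4R}$), and since the integrand tested against $\varphi_e$ is even in $x_1$, the integral over $\bR\times B_{4R}$ equals twice the integral over $\bR\times B_{4R}^+$, which vanishes by the hypothesis on $u$. Therefore $\bar u$ is a weak solution in $\bR\times B_{4R}$.

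It then remains to apply Lemma \ref{lem0906_1} to $\bar u$ on $\bR\times B_{4R}$ and translate back: the bound
\[
\||D_t^{1/2}\bar u|+|D\bar u|+\sqrt{\lambda}|\bar u|\|_{L_\infty(Q_R)}\le N\left(|D_t^{1/2}\bar u|^p+|D\bar u|^p+\lambda^{p/2}|\bar u|^p\right)^{1/p}_{Q_{4R}}
\]
becomes, through the identities of the first paragraph, exactly the claimed estimate with the same $N=N(d,p)$. The main obstacle — more a matter of careful bookkeeping than of genuine difficulty — is the parity verification of the second paragraph: one must check that the Hilbert-transform term behaves correctly under the reflection and that $\bar u$ genuinely lies in $H_p^{1/2,1}(\bR\times B_{4R})$, i.e.\ that the conormal condition prevents a surface contribution on $\{x_1=0\}$; once this is in place the remainder is scaling and a change of variables.
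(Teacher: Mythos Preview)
Your proposal is correct and follows essentially the same approach as the paper: take the even extension $\bar u$ of $u$ across $\{x_1=0\}$, check that $\bar u\in H_p^{1/2,1}(\bR\times B_{4R})$ with $D_t^{1/2}\bar u$ the even extension of $D_t^{1/2}u$, verify that $\bar u$ solves the homogeneous equation in $\bR\times B_{4R}$ thanks to the conormal condition, and then apply Lemma~\ref{lem0906_1}. Your parity bookkeeping via the even/odd splitting of the test function is more explicit than the paper's, which simply asserts the conclusion and refers to \cite[Remark~3.5]{MR4920684} for the behavior of $D_t^{1/2}$ under reflection, but the substance is identical.
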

\begin{proof}
We take the even extension of $u$ with respect to the $x_1$-variable as follows:
\[
\bar{u}(t,x_1,x')
=
\begin{cases}
  u(t,x_1,x')  &\text{for} \quad (t,x_1,x')\in \bR\times B_{4R}^+,
  \\
  u(t,-x_1,x')  &\text{for} \quad (t,x_1,x')\in \bR\times B_{4R}^-.
\end{cases}
\]
Then, $\bar{u}$ belongs to $H_p^{1/2,1}(\bR \times B_{4R})$.
In particular, we have
\[
D_t^{1/2}\bar{u} = D_t^{1/2}u(t,x_1,x')I_{x_1>0} + D_t^{1/2}u(t,-x_1,x')I_{x_1<0},
\]
see \cite[Remark 3.5]{MR4920684}.
Moreover, due to the boundary condition, $\bar{u}$ satisfies $\bar{u}_t-\Delta \bar{u}+\lambda \bar{u} = 0$ in $\bR \times B_{4R}$.
Thus, the desired inequality follows from Lemma \ref{lem0906_1}.
\end{proof}

Now, we are ready to establish key estimates for solutions to  the heat equation on a Reifenberg-flat domain.
The proof is almost identical to that of Proposition \ref{prop reifen}, except that estimates for $D_t^{1/2}w$ and $D_t^{1/2}v$ (where $u=w+v$) are additionally obtained, and Lemmas \ref{lem0906_1} and \ref{lem half cono local lambda laplacian} are used in place of Lemma \ref{lem half cono local lambda}.

%%%%%%%%%%%%%%%%%%%%%%%%%%%%%%%%%

\begin{proposition}
							\label{prop reifen laplacian} 
Let $\lambda \in (0,\infty)$, $p \in (1,\infty)$, $\nu \in(p,\infty)$, $\nu'=\nu p/(\nu-p)$, $R\in (0,R_0]$.
Assume that $u \in H_\nu^{1/2,1}(\cQ) \cap H_p^{1/2,1}(\cQ)$ satisfies
\begin{equation*}
u_t - \Delta u + \lambda u = D_t^{1/2} h +D_ig_i+f \quad \text{in} \,\, \bR\times \Omega
\end{equation*}
with the conormal derivative boundary condition on $\bR\times {\partial\Omega}$, where
$h,g_i,f \in L_\nu(\cQ) \cap L_p(\cQ)$.
Denote $F$ as in \eqref{eq0108_03} and
\[
U = (D_t^{1/2}u, Du, \sqrt{\lambda}u).
\]
Then, under Assumption \ref{assum rei coeffi} ($\gamma$), there exists $N=N(d,\delta,p,\nu)$ such that the following hold true. 
\begin{itemize}
\item[(i)] 
For $X_0=(t_0,x_0)\in \bR\times \Omega$ with $\operatorname{dist}(x_0,\partial\Omega)\ge R$, there exist vector-valued functions $V$ and $W$ in $Q_R(X_0)$ such that $U = W + V$ in $Q_R(X_0)$, where $W$ and $W$ satisfy 
\begin{align*}
(|W|^p)_{Q_R(X_0)}^{1/p} &\le N\gamma_0^{1/\nu'}\sum_{j=0}^\infty2^{-j/4}(|U|^\nu)^{1/\nu}_{Q_{2^{j/2}R,R}(X_0)}\\
&\quad +N\sum_{j=0}^\infty2^{-j/4}(|F|^p)^{1/p}_{Q_{2^{j/2}R,R}(X_0)},
\end{align*}
and 
\begin{align*}
\|V\|_{L_\infty(Q_{R/8}(X_0))}
&\le  N\gamma_0^{1/\nu'}\sum_{j=0}^\infty2^{-j/4}(|\cU|^\nu)^{1/\nu}_{Q_{2^{j/2}R,R}(X_0)}\\
&\quad +N\sum_{j=0}^\infty2^{-j/4}(|F|^p)^{1/p}_{Q_{2^{j/2}R,R}(X_0)} +N(|U|^p)^{1/p}_{Q_R(X_0)}.
\end{align*}

\item[(ii)]
For $X_0=(t_0,x_0)\in \bR\times \partial\Omega$,
there exist vector-valued functions $W$ and $V$ in $C_R(X_0)$ such that $U = W + V$ in $C_R(X_0)$, where $W$ and $V$ satisfy 
\begin{equation*}
        %\label{eq w L_2 laplacian}
\begin{aligned}
(|W|^p)_{C_R(X_0)}^{1/p}
&\le N\gamma_0^{1/\nu'}\sum_{j=0}^\infty  2^{-j/4}\left(|U|^\nu\right)^{1/\nu}_{C_{2^{j/2}R,R}(X_0) }
\\ 
&+ N\sum_{j=0}^\infty  2^{-j/4}\left(|F|^p\right)^{1/p}_{C_{2^{j/2}R,R}(X_0) }
\\
+ N\gamma_0^{1/\nu'}&\sum_{j=0}^\infty  2^{-j/4}\left(|H(D_t^{1/2}u)|^\nu\right)^{1/\nu}_{C_{2^{j/2}R,R}(X_0) },
\end{aligned}
\end{equation*}
and
\begin{equation*}
        %\label{eq v L_{p_{1}} laplacian}
\begin{aligned}
\|V\|_{L_\infty(C_{R/8}(X_0))} \le N\gamma_0^{1/\nu'}\sum_{j=0}^\infty  2^{-j/4}\left(|U|^\nu\right)^{1/\nu}_{C_{2^{j/2}R,R}(X_0)}
\\
+N\sum_{j=0}^\infty  2^{-j/4}\left(|F|^p\right)^{1/p}_{C_{2^{j/2}R,R}(X_0)} + N(|U|^p)^{1/p}_{C_R(X_0)}
\\
+ N\gamma_0^{1/\nu'}\sum_{j=0}^\infty  2^{-j/4}\left(|H(D_t^{1/2}u)|^\nu\right)^{1/\nu}_{C_{2^{j/2}R,R}(X_0)}.   
\end{aligned}
\end{equation*}
\end{itemize}
\end{proposition}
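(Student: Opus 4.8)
The plan is to run the argument of the proof of Proposition~\ref{prop reifen} essentially verbatim, keeping track of the extra component $D_t^{1/2}$ in the vector $U=(D_t^{1/2}u,Du,\sqrt\lambda u)$ and replacing the higher-integrability estimate of Lemma~\ref{lem half cono local lambda} by the $L_\infty$-estimates of Lemmas~\ref{lem0906_1} and~\ref{lem half cono local lambda laplacian}. As in that proof, I would treat only the boundary case (ii); case (i) is handled in the same way but more simply, using Lemma~\ref{lem0906_1} in place of Lemma~\ref{lem half cono local lambda laplacian}, with no reflection and no coefficient-freezing needed since the operator is already $u_t-\Delta u+\lambda u$. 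By Assumption~\ref{assum rei coeffi} we may assume \eqref{eq0829_03} holds, and, after a translation, that $\hat{x}_0=0$ and $t_0=0$.

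First I would apply Lemma~\ref{lem0901_1} with $a_{ij}=\delta_{ij}$, hence $\bar a_{ij}(t)=\delta_{ij}$: then $u\in H_\nu^{1/2,1}(\bR\times B_R^+(x_0))\cap H_p^{1/2,1}(\bR\times B_R^+(x_0))$ satisfies $u_t-\Delta u+\lambda u=D_t^{1/2}\tilde h+D_i\tilde g_i+\tilde f$ in $\bR\times B_R^+(x_0)$ with the conormal derivative boundary condition on the flat part, where now the perturbation term $(a_{ij}-\bar a_{ij})D_ju$ vanishes while the reflection terms supported on $\Omega^*_R(x_0)$ (involving $h$, $g_i$, $Du$, $\lambda u$, and $H(D_t^{1/2}u)$) remain. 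Extending $\tilde h,\tilde g_i,\tilde f$ by zero to $\bR\times\Pi_R$ and invoking Proposition~\ref{prop1211_1}(ii), I obtain $w\in H_\nu^{1/2,1}(\bR\times\Pi_R)\cap H_p^{1/2,1}(\bR\times\Pi_R)$ solving the same equation on $\bR\times\Pi_R$ with the conormal condition on $\bR\times\{x_1=0\}$ and the zero Dirichlet condition on $\bR\times\{x_1=R\}$. Applying Lemma~\ref{lem0106_1} — which crucially already controls $D_t^{1/2}w$ on its left-hand side with the same summable $\sum_j 2^{-j/4}$ structure — and then repeating verbatim the computation in the ``Proof of \eqref{eq0107_02}'' (H\"older's inequality on $\tilde h,\tilde g_i,\tilde f$ together with $|\Omega^*_R(x_0)|\le N\gamma_0 R^d$; note the $Du$-reflection term is $\le |U|$, which only enlarges the bound), I get that $(|D_t^{1/2}w|^p+|Dw|^p+\lambda^{p/2}|w|^p)^{1/p}_{Q_R^+}$ is bounded by exactly the right-hand side appearing in the estimate for $W$ in part (ii).

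Next I would set $v=u-w$ on $\bR\times B_R^+(x_0)$. Since $u\in H_p^{1/2,1}(\cQ)$ and $w\in H_p^{1/2,1}(\bR\times\Pi_R)$, we have $v\in H_p^{1/2,1}(\bR\times B_{R/2}^+)$, and $v$ solves $v_t-\Delta v+\lambda v=0$ in $\bR\times B_{R/2}^+$ with the conormal derivative boundary condition on $\bR\times\Gamma_{R/2}$. Rescaling Lemma~\ref{lem half cono local lambda laplacian} (from $\bR\times B_{R/2}^+$ down to $Q_{R/8}^+$) yields
\[
\bigl\||D_t^{1/2}v|+|Dv|+\sqrt\lambda\,|v|\bigr\|_{L_\infty(Q_{R/8}^+)}\le N\bigl(|D_t^{1/2}v|^p+|Dv|^p+\lambda^{p/2}|v|^p\bigr)^{1/p}_{Q_{R/2}^+},
\]
and bounding the right-hand side by $N(|U|^p)^{1/p}_{Q_{R/2}^+}+N(|D_t^{1/2}w|^p+|Dw|^p+\lambda^{p/2}|w|^p)^{1/p}_{Q_{R/2}^+}$, followed by the $w$-estimate from the previous paragraph, gives the claimed bound for $\|V\|_{L_\infty}$. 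Finally, I would define $W$ and $V$ on $C_R(X_0)$ exactly as in Proposition~\ref{prop reifen}: on the part $\{x_1>0\}$ set $W=(D_t^{1/2}w,Dw,\sqrt\lambda w)$ and $V=(D_t^{1/2}v,Dv,\sqrt\lambda v)$, while on $\{x_1\le 0\}$ set $W=U$ and $V=0$, so that $U=W+V$ on $C_R(X_0)$. The estimate for $(|W|^p)^{1/p}_{C_R(X_0)}$ follows by splitting the cylinder into its $x_1>0$ and $x_1\le 0$ pieces and using H\"older's inequality together with $|\Omega^*_R(x_0)|\le N\gamma_0 R^d$ on the latter; the estimate for $\|V\|_{L_\infty(C_{R/8}(X_0))}$ is immediate since $V$ vanishes where $x_1\le 0$, so it reduces to the bound on $\|V\|_{L_\infty(Q_{R/8}^+)}$ already obtained.

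I do not expect a genuine obstacle, since the scheme is identical to Proposition~\ref{prop reifen}; the one point that needs care is the regularity bookkeeping for $v=u-w$ near the flat boundary — one must know $v\in H_p^{1/2,1}$ there, so that $D_t^{1/2}v$ is an honest function and Lemma~\ref{lem half cono local lambda laplacian} applies. This is precisely where having the heat operator (equivalently, $t$-independent leading coefficients) is essential: for merely time-measurable $a_{ij}$ no $L_\infty$ (or higher-$L_p$) control of $D_t^{1/2}$ of a homogeneous solution is available, which is why this proposition is stated only for $u_t-\Delta u$.
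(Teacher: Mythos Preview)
Your proposal is correct and follows essentially the same approach as the paper: both argue by repeating the proof of Proposition~\ref{prop reifen}, adding the $D_t^{1/2}w$-component on the left of \eqref{eq0107_03}--\eqref{eq0107_02} via Lemma~\ref{lem0106_1}, and replacing Lemma~\ref{lem half cono local lambda} by the $L_\infty$-estimates of Lemmas~\ref{lem0906_1} and~\ref{lem half cono local lambda laplacian} to control $(D_t^{1/2}v,Dv,\sqrt\lambda v)$. Your observations that the coefficient-perturbation term $(a_{ij}-\bar a_{ij})D_ju$ vanishes here while the reflected $Du$-term survives, and that the heat operator is what makes the $L_\infty$ control of $D_t^{1/2}v$ available, match the paper's remarks exactly.
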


\begin{proof}
Since one can follow the proof of Proposition \ref{prop reifen}, we omit the details.
A few remarks are in order.
Compared to Proposition \ref{prop reifen}, the proof of this proposition requires estimates for $D_t^{1/2}w$ and $D_t^{1/2}v$.
In fact, one can include $D_t^{1/2}w$ on the left-hand side of the inequalities in \eqref{eq0107_03} and \eqref{eq0107_02},  because the estimate in Lemma \ref{lem0106_1} accounts for the corresponding term on the left-hand side.
However, when the coefficients are $\bar{a}_{ij}(t)$, the $L_\infty$-estimate of $D_t^{1/2}v$ is not available. %(see Lemma \ref{lem half cono local lambda}).
For this reason, we have not included the estimates for $D_t^{1/2}v$, nor for $D_t^{1/2}w$.
In contrast, when the Laplace operator is present, $L_\infty$-estimates of $v$ are available (see Lemmas \ref{lem0906_1} and \ref{lem half cono local lambda laplacian}).
Thus, by following the same argument as in the proof of Proposition \ref{prop reifen}, one can also derive estimates for $D_t^{1/2}v$.
\end{proof}

\section{Proofs of main results}\label{sec proof of main}

\subsection{Proof of Theorem \ref{conormal entire time}: Level set argument}

For $X_0=(t_0,x_0)\in \bR\times\bR^d$ and a function $f$ in $L_{1,\text{loc}}(\bR\times\bR^d)$, we define
$$
\cM f(X_0)=\sup_{Q_r(t,x)\ni X_0}\dashint_{Q_{r}(t,x)}|f|\,dy\,ds
$$
and 
$$
\cS\cM f(X_0)=\sup_{Q_{r_1,r_2}(t,x)\ni X_0}\dashint_{Q_{r_1,r_2}(t,x)}|f|\,dy\,ds.
$$

We are now ready to give
\begin{proof}[Proof of Theorem \ref{conormal entire time}] We first establish the a priori estimate and after that, we prove the existence and uniqueness of solutions first in the unweighted case and then in the weighted mixed-norm case.
	
{\bf A priori estimate}. We first impose the additional condition that $u$ is supported on $Q_{\gamma_0^{1/(d+2)}R_0}(Y_0)$ for some $Y_0=(s_0,y_0)\in \bR\times \overline\Omega$. Here, $\gamma_0$ is the constant from Assumption \ref{assum rei coeffi} which will be specified later. Also, assume that $p=q$. By the self-improving property of $A_p$-weights, we have $\omega_1,\omega_2\in A_{p/\nu}$ for a sufficiently small $\nu\in (1,p)$. Now we take $p_{0}=(\nu+1)/2\in (1,\nu)$. By the reverse H\"older inequality, for any $Q_r(Z_0)\subset \bR^{d+1}$ and a measurable set $E\subset \bR^{d+1}$, we have
\begin{equation}
\label{eq5.17}
\frac{1}{N_0}\left(\frac{|E\cap Q_r(Z_0)|}{|Q_r(Z_0)|}\right)^p \le\frac{\omega(E\cap Q_r(Z_0))}{\omega(Q_r(Z_0))}\le N_0\left(\frac{|E\cap Q_r(Z_0)|}{|Q_r(Z_0)|}\right)^\beta
\end{equation}
 for some small $\beta\in (0,1]$  and $N_0$ depending only on $d$, $p$, and $K$. See \cite[Lemma 4.3]{MR3947859} for further details on the derivation of the above inequalities.
 Note that in the unweighted case, we just take $\beta=1$. Then we fix  a sufficiently large $p_{1}\in (p,\infty)$ depending only on $d$, $p$, and $K$ so that $\beta>p/p_{1}$.

Let $X_0=(t_0,x_0)\in \bR\times \overline\Omega$.
By Proposition \ref{prop reifen} with $p_{0}$ in place of $p$, it is easily seen that when $R\le R_0$, there exist vector-valued functions $\cW$ and $\cV$ in $C_{R/16}(X_0)$ such that $\cU = \cW + \cV$ in $C_{R/16}(X_0)$, where $\cW$ and $\cV$ satisfy 
\begin{equation}
						\label{eq4.49_0}%\label{eq4.49}
\begin{aligned}
&\left(|\cW|^{p_{0}}\right)_{C_{R/16}(X_0)}^{1/p_{0}}
\le N\gamma_0^{1/\nu'}\sum_{j=0}^\infty2^{-j/4}\left(|\cU|^\nu\right)^{1/\nu}_{C_{2^{j/2}R,R}(X_0)}\\
&\quad + N\sum_{j=0}^\infty2^{-j/4}\left(|F|^{p_{0}}\right)^{1/p_{0}}_{C_{2^{j/2}R,R}(X_0)}
+N\gamma_0^{1/\nu'}\sum_{j=0}^\infty2^{-j/4}\left(|H(D_t^{1/2}u)|^\nu\right)^{1/\nu}_{C_{2^{j/2}R,R}(X_0)},
\end{aligned}
\end{equation}
and
\begin{equation}
	\label{eq4.50_0}%\label{eq4.50}
	\begin{aligned}
&\left(|\cV|^{p_{1}}\right)_{C_{R/128}(X_0)}^{1/p_{1}} \le N\gamma_0^{1/\nu'}\sum_{j=0}^\infty2^{-j/4}\left(|\cU|^\nu\right)^{1/\nu}_{C_{2^{j/2}R,R}(X_0)} \\
&\quad 
+N\sum_{j=0}^\infty2^{-j/4}\left(|F|^{p_{0}}\right)^{1/p_{0}}_{C_{2^{j/2}R,R}(X_0)} +N\left(|\cU|^{p_{0}}\right)^{1/p_{0}}_{C_{R}(X_0)}\\
&\quad +N\gamma_0^{1/\nu'}\sum_{j=0}^\infty2^{-j/4}\left(|H(D_t^{1/2}u)|^\nu\right)^{1/\nu}_{C_{2^{j/2}R,R}(X_0)},
\end{aligned}
\end{equation}
where $\nu'$ satisfies $1/p_{0}=1/\nu+1/\nu'$.
Using the fact that 
\[
|\Omega_r(z)|\ge N(d) r^d, \quad \forall z\in \overline{\Omega}, \quad \forall r\in(0,R_0],
\]
we obtain the following estimates from \eqref{eq4.49_0} and \eqref{eq4.50_0}:
\begin{equation}
						\label{eq4.49}
\begin{aligned}
&\left(I_{\cQ}|\cW|^{p_{0}}\right)_{Q_{R/16}(X_0)}^{1/p_{0}}
\le N\gamma_0^{1/\nu'}\sum_{j=0}^\infty2^{-j/4}\left(I_{\cQ}|\cU|^\nu\right)^{1/\nu}_{Q_{2^{j/2}R,R}(X_0)}\\
&\quad + N\sum_{j=0}^\infty2^{-j/4}\left(I_{\cQ}|F|^{p_{0}}\right)^{1/p_{0}}_{Q_{2^{j/2}R,R}(X_0)}\\
&\quad +N\gamma_0^{1/\nu'}\sum_{j=0}^\infty2^{-j/4}\left(I_{\cQ}|H(D_t^{1/2}u)|^\nu\right)^{1/\nu}_{Q_{2^{j/2}R,R}(X_0)}
\end{aligned}
\end{equation}
and
\begin{equation}
	\label{eq4.50}
	\begin{aligned}
&\left(I_{\cQ}|\cV|^{p_{1}}\right)_{Q_{R/128}(X_0)}^{1/p_{1}} \le N\gamma_0^{1/\nu'}\sum_{j=0}^\infty2^{-j/4}\left(I_{\cQ}|\cU|^\nu\right)^{1/\nu}_{Q_{2^{j/2}R,R}(X_0)} \\
&\quad 
+N\sum_{j=0}^\infty2^{-j/4}\left(I_{\cQ}|F|^{p_{0}}\right)^{1/p_{0}}_{Q_{2^{j/2}R,R}(X_0)} +N\left(I_{\cQ}|\cU|^{p_{0}}\right)^{1/p_{0}}_{Q_{R}(X_0)}\\
&\quad +N\gamma_0^{1/\nu'}\sum_{j=0}^\infty2^{-j/4}\left(I_{\cQ}|H(D_t^{1/2}u)|^\nu\right)^{1/\nu}_{Q_{2^{j/2}R,R}(X_0)}.
\end{aligned}
\end{equation}
%%%%%%%%%%%

%%%%%%%%%%%
Now for $\mathsf{s}>0$ and $\gamma\in (0,1)$, we denote
$$
\cA(\mathbf{s})=\{(t,x)\in \bR\times \overline \Omega\,:\, |\cU|>\mathsf{s}\}
$$
and 
\begin{align*}
	&\cB(\mathbf{s})=\{(t,x)\in \bR\times \overline \Omega\,:\, \gamma^{-\frac 1 {p_1}}(\cM (I_{\cQ}|\cU|^{p_0})(t,x))^{\frac 1{p_0}}\\
	&\quad+\gamma^{-\frac 1 {p_0}}\gamma_0^{\frac 1 {\nu'}}(\cS\cM (I_{\cQ}|\cU|^{\nu})(t,x))^{\frac 1{\nu}}     +\gamma^{-\frac 1 {p_0}}\gamma_0^{\frac 1 {\nu'}}(\cS\cM (I_{\cQ}|H(D_t^{1/2}u)|^\nu)(t,x))^{\frac 1{\nu}}\\
	& \quad    +\gamma^{-\frac 1 {p_0}}(\cS\cM (I_{\cQ}|F|^{p_0})(t,x))^{\frac 1{p_0}}>\mathsf{s}\}.
\end{align*}
We claim that for a sufficiently large $\kappa=\kappa(d,\delta,p_0,p_1)>0$, the following hold: for $0<R\le R_0/128$, $X_0=(t_0,x_0)\in \bR\times \overline\Omega$, and $\mathsf{s}>0$, if 
\begin{equation}
	\label{eq5.16}
\omega(Q_R(X_0)\cap \cA(\kappa \mathsf{s}))\ge N_0\gamma^\beta \omega(Q_R(X_0)),
\end{equation}
then
$$
C_{R}(X_0)\subset \cB(\mathsf{s}).
$$
Here, $N_0$ is the constant from the inequality \eqref{eq5.17}.
Indeed, by dividing the equation by $\mathsf{s}$,  we may assume that $\mathsf{s}=1$. Suppose that $C_{R}(X_0)\not\subset \cB(\mathsf{s})$ so that there is a point $(s,y)\in C_R(X_0)$ such that 
\begin{align*}
	&\gamma^{-\frac 1 {p_1}}(\cM (I_{\cQ}|\cU|^{p_0})(s,y))^{\frac 1{p_0}}
	+\gamma^{-\frac 1 {p_0}}\gamma_0^{\frac 1 {\nu'}}\cS\cM (I_{\cQ}|\cU|^{\nu})(s,y))^{\frac 1{\nu}}\\
	&\quad +\gamma^{-\frac 1 {p_0}}\gamma_0^{\frac 1 {\nu'}}(\cS\cM (I_{\cQ}|H(D_t^{1/2}u)|^\nu)(s,y))^{\frac 1{\nu}}
	+\gamma^{-\frac 1 {p_0}}(\cS\cM (I_{\cQ}|F|^{p_0})(s,y))^{\frac 1{p_0}}\le 1.
\end{align*}
This, together with \eqref{eq4.49} and \eqref{eq4.50} with $128R$ in place of $R$, implies that
$$\left(I_{\cQ}|\cW|^{p_{0}}\right)_{Q_{R}(X_0)}^{1/p_{0}}
\le N_{1}\gamma^{1/p_0}
$$
and
$$
\left(I_{\cQ}|\cV|^{p_{1}}\right)_{Q_{R}(X_0)}^{1/p_{1}} 
\le N_{1}(\gamma^{1/p_0}+\gamma^{1/p_1})\le N_{1}\gamma^{1/p_1},
$$
where $N_{1}$ depends only on $d$, $\delta$, $p_0$, and $p_1$. 
%Let $\kappa_1\in (0,\kappa)$ be a constant to be specified later. 
Then, following the proof of \cite[Lemma 5.2]{MR4030286}, we show that for sufficiently large $\kappa$ depending on the same set of parameters, we have
\begin{equation}
\label{eq0825_1}
|Q_R(X_0)\cap \cA(\kappa )|
< \gamma |Q_R(X_0)|.    
\end{equation}
In order to establish this, for $0<M<\kappa$, by Chebyshev's inequality, we have \[
|Q_R(X_0)\cap \cA(\kappa)|
=|\{(t,x)\in Q_R(X_0):(I_\cQ|\cU|)(t,x)>\kappa\}|
\]
\[\le \left|\{(t,x)\in Q_R(X_0):(I_\cQ|\cW|)(t,x)>\kappa -M \}\right|
\]
\[
+\left|\{(t,x)\in Q_R(X_0):(I_\cQ|\cV|)(t,x)>M \}\right|\]
\[
\leq (\kappa-M)^{-p_0} \int_{Q_{R}(X_0)} I_{\cQ}|\cW|^{p_0} + M^{-p_1} \int_{Q_{R}(X_0)} I_{\cQ}|\cV|^{p_1}\]
\[\leq \frac{N_1^{p_0} \gamma |Q_R(X_0)|}{(\kappa - M)^{p_0}} + \frac{N_1^{p_1}\gamma|Q_{R}(X_0)|}{M^{p_1}}\le \gamma |Q_R(X_0)|\]
provided that we choose a sufficiently large $M$ depending only on $d$, $\delta$, $p_0$, and $p_1$ so that
$$
 (N_1/M)^{p_1} < 1/2,
$$
and then choose a $\kappa$ depending only on $d$, $\delta$, $p_0$, and $p_1$ so that
$$
N_1^{p_0}/(\kappa-M)^{p_0} < 1/2.
$$
This yields \eqref{eq0825_1}.
We note that the inequality \eqref{eq0825_1} contradicts \eqref{eq5.16} in view of \eqref{eq5.17}. The claim is proved.
%%%%%%%%%%%%%%%%%%%%%%%

%%%%%%%%%%%%%%%%%%%%
We also claim that for any $X_0\in \bR\times \overline{\Omega}$ and $R\ge R_0/128$, we have 
\begin{equation}
    \label{eq0825_2}
    \omega(\cA(\kappa \mathbf{s})\cap Q_R(X_0))< N_0\gamma^\beta \omega (Q_R(X_0)),
\end{equation}
provided that 
\[
\mathbf{s}>\mathbf{s}_0:= \frac{N_2}{N_0\gamma^\beta \omega(Q_{R_0/2}(Y_0))^{\nu/p}}\|\cU\|_{L_{{p/\nu},\omega}(\cQ)},
\]
where $N_2=N_2(d,K)$.
Since $\operatorname{supp}u\subset Q_{\gamma_0^{1/(d+2)}R_0}(Y_0)$, it is sufficient to consider the case that $Q_R(X_0)\cap Q_{\gamma_0^{1/(d+2)}R_0}(Y_0)\neq \emptyset$. In this case, we observe that 
\[
Q_{R_0/2}(Y_0)\subset Q_{256R}(X_0).
\]
Using this, Chebyshev's inequality, H\"older's inequality, and the doubling property of $A_p$-weights, we obtain 
\[
\omega(\cA(\kappa\mathbf{s})\cap Q_R(X_0))
\le \frac{1}{\kappa \mathbf{s}}\int_{Q_{256R}(X_0)}I_{\cQ}|\cU|\omega
\]
\[
\le \frac{1}{\kappa \mathbf{s}}\omega(Q_{256R}(X_0))^{1-\nu/p}\|\cU\|_{L_{p/\nu,\omega}(\cQ)}
\le  \frac{N}{\kappa \mathbf{s}}\frac{\omega(Q_{R}(X_0))}{\omega(Q_{R_0/2}(Y_0))^{\nu/p}}\|\cU\|_{L_{p/\nu,\omega}(\cQ)}.
\]
This verifies the claim \eqref{eq0825_2}.
By combining the above two claims and the ``crawling of ink spots'' lemma due to Krylov and Safonov \cite{MR563790}, we conclude that
\begin{equation}
						\label{eq5.32}
\omega(\cA(\kappa \mathsf{s}))\le N\gamma^\beta \omega(\cB(\mathsf{s})),\quad \mathbf{s}>\mathbf{s_0}.
\end{equation}
Here, we take $\gamma$ such that $N_0\gamma^\beta<1$. For a detailed proof, we refer to \cite{MR3467697,MR3947859}. By utilizing \eqref{eq5.32}, we have
\begin{equation}
	\label{eq0825_4}%\label{eq6.32}
\begin{aligned}
	&\|\cU\|_{L_{p,\omega}(\cQ)}^p=p\int_0^\infty \omega(\cA(\mathsf{s}))\mathsf{s}^{p-1}\,d\mathsf{s}
	=p\kappa^p \int_0^\infty \omega(\cA(\kappa\mathsf{s}))\mathsf{s}^{p-1}\,d\mathsf{s}\\
	&\le N\int_0^{\mathbf{s}_0}\omega(\cA(\kappa\mathbf{s}))\mathbf{s}^{p-1}\,d\mathbf{s}+N\gamma^\beta \int_0^\infty \omega(\cB(\mathsf{s}))\mathsf{s}^{p-1}\,d\mathsf{s}\\
    &=:I_1+I_2.
    \end{aligned}
    \end{equation}
    We restrict $\gamma_0$ such that $\gamma_0^{1/(d+2)}\le 1/2$.
    By utilizing Chebyshev's inequality, H\"older's inequality, and \eqref{eq5.17}, we have  
    \begin{equation}
        \label{eq0825_5}
        \begin{aligned}
        I_1&\le N\left(\int_0^{\mathsf{s}_0}\mathsf{s}^{p(1-               1/\nu)-1}\,d\mathsf{s}\right)\|\cU\|_{L_{p/\nu,\omega}(\cQ)}^{p/\nu}
        \\
        &\le N\gamma^{\beta p(1/\nu-1)}\omega(Q_{R_0/2}(Y_0))^{1-\nu}\|\cU\|_{L_{p/\nu,\omega}(\cQ)}^{p}
        \\
        &\le N\gamma^{\beta p(1/\nu-1)}\left(\frac{\omega(Q_{\gamma_0^{1/(d+2)}R_0}(Y_0))}{\omega(Q_{R_0/2}(Y_0))}\right)^{\nu-1}\|\cU\|_{L_{p,\omega}(\cQ)}^{p}
        \\
        &\le N\gamma^{\beta p(1/\nu-1)}\gamma_0^{\beta(\nu-1)}\|\cU\|_{L_{p,\omega}(\cQ)}^{p}.
        \end{aligned}
        \end{equation}
        By \eqref{eq5.32} and the weighted Hardy-Littlewood maximal function theorem, we obtain
    \begin{equation}
    \label{eq0825_6}
    \begin{aligned}
    I_2
	&\le N\gamma^\beta \int_0^\infty\omega\big( \big\{  \gamma^{-\frac 1 {p_1}}(\cM (I_{\cQ}|\cU|^{p_0})(t,x))^{\frac 1{p_0}}>\mathsf{s}/4 \big\}\big)\mathsf{s}^{p-1}\,d\mathsf{s}\\
	&\quad +N\gamma^\beta \int_0^\infty\omega\big( \gamma^{-\frac 1 {p_0}}\gamma_0^{\frac 1 {\nu'}}(\cS\cM (I_{\cQ}|\cU|^{\nu})(t,x))^{\frac 1{\nu}}>\mathsf{s}/4 \big\}\big)\mathsf{s}^{p-1}\,d\mathsf{s}\\
	&\quad +N\gamma^\beta \int_0^\infty\omega\big( \gamma^{-\frac 1 {p_0}}\gamma_0^{\frac 1 {\nu'}}(\cS\cM (I_{\cQ}|H(D_t^{1/2}u|)^{\nu})(t,x))^{\frac 1{\nu}}>\mathsf{s}/4 \big\}\big)\mathsf{s}^{p-1}\,d\mathsf{s}\\
	&\quad +N\gamma^\beta \int_0^\infty\omega\big( \gamma^{-\frac 1 {p_0}}(\cS\cM (I_{\cQ}|F|^{p_0})(t,x))^{\frac 1{p_0}}>\mathsf{s}/4 \big\}\big)\mathsf{s}^{p-1}\,d\mathsf{s}\\
	& \le N(\gamma^{\beta-p/p_1}+\gamma^{\beta-p/p_0}\gamma_0^{p/\nu'})\|\cU\|_{L_{p,\omega}(\cQ)}^p\\
	&\quad +N\gamma^{\beta-p/p_0}\gamma_0^{p/\nu'}\|H(D_t^{1/2}u)\|_{L_{p,\omega}(\cQ)}^p+N\gamma^{\beta-p/p_0}\|F\|_{L_{p,\omega}(\cQ)}^p.
\end{aligned}
\end{equation}
By connecting \eqref{eq0825_4},\eqref{eq0825_5}, and \eqref{eq0825_6}, we reach that 
\begin{multline}
							\label{eq6.32}
\|\cU\|_{L_{p,\omega}(\cQ)}^p\le N(\gamma^{\beta p(1/\nu-1)}\gamma_0^{\beta(\nu-1)}+\gamma^{\beta-p/p_1}+\gamma^{\beta-p/p_0}\gamma_0^{p/\nu'})\|\cU\|_{L_{p,\omega}(\cQ)}^p
\\
+N\gamma^{\beta-p/p_0}\gamma_0^{p/\nu'}\|H(D_t^{1/2}u)\|_{L_{p,\omega}(\cQ)}^p+N\gamma^{\beta-p/p_0}\|F\|_{L_{p,\omega}(\cQ)}^p.
\end{multline}
To estimate the second term on the right-hand side above, we rewrite the equation into
$$
u_t-\Delta u+\lambda u=D_t^{1/2} h+D_i (g_i+(a_{ij}-\delta_{ij})D_j u)+f
$$
with the corresponding conormal boundary condition. Following the proof above by using Proposition \ref{prop reifen laplacian} instead of Proposition \ref{prop reifen} and with $p_1=\infty$, we have
\begin{multline}
							\label{eq5.54}
\|U\|_{L_{p,\omega}(\cQ)}^p	 \le N(\tilde{\gamma}^{\beta p(1/\nu-1)}\gamma_0^{\beta(\nu-1)}+\tilde \gamma^{\beta}+\tilde\gamma^{\beta-p/p_0}\gamma_0^{p/\nu'})\|U\|_{L_{p,\omega}(\cQ)}^p
\\
+N\tilde\gamma^{\beta-p/p_0}\gamma_0^{p/\nu'}\|H(D_t^{1/2}u)\|_{L_{p,\omega}(\cQ)}^p+N\tilde\gamma^{\beta-p/p_0}\|F+|Du|\|_{L_{p,\omega}(\cQ)}^p
\end{multline}
for any $\tilde \gamma\in (0,1)$. By choosing $\tilde \gamma$ sufficiently small and then $\gamma_0$ sufficiently small, we get from \eqref{eq5.54} that
\begin{equation}
	\label{eq6.40}
		\|U\|_{L_{p,\omega}(\cQ)}^p	 \le N\|F+|Du|\|_{L_{p,\omega}(\cQ)}^p.
\end{equation}
Recall the definition of $\cU$ and $U$.
Plugging \eqref{eq6.32} into \eqref{eq6.40} and taking $\gamma$ sufficiently small (recalling that $\beta>p/p_1$) and then $\gamma_0$ sufficiently small, we get the desired a priori estimate when $p=q$ and $u$ is supported on a small cylinder.

To remove the condition that $u$ is supported on a small cylinder, we use a standard partition of unity argument, as in the proof of \cite[Corollary 2.9]{MR4541086}, to get 
\begin{multline*}
	\|D_t^{1/2}u\|_{L_{p,\omega}(\cQ)} + \|Du\|_{L_{p,q,\omega}(\cQ)}+\sqrt{\lambda}\|u\|_{L_{p,\omega}(\cQ)}\leq N \big( \|h\|_{L_{p,\omega}(\cQ)}
\\
+ \|g_i\|_{L_{p,\omega}(\cQ)} + \lambda^{-1/2}\|f\|_{L_{p,\omega}(\cQ)}+\lambda^{-1/2}\|Du\|_{L_{p,\omega}(\cQ)}+\|u\|_{L_{p,\omega}(\cQ)}\big).
\end{multline*}
By taking $\lambda\ge \lambda_0$ sufficiently large, we obtain the a priori estimate when $p=q$. For general $p,q\in (1,\infty)$, we apply the extrapolation theorem in \cite{MR3812104}.

\vspace{.5em}
{\bf Solvability.}
\begin{comment}[Another approach]
    {\color{cyan}By the standard method of continuity, it suffices to consider the case where \( \mathcal{P}_\lambda u := u_t - \Delta u + \lambda u \). Using approximation arguments, we may assume that \( f \), \( g \), and \( h \) are smooth functions with compact support in \( \mathcal{Q} \). In particular, we have \( D_t^{1/2} h \in L_{p,q,\omega}(\mathcal{Q}) \).
Let \( \gamma_1 = \gamma_1(d,\delta,p,q,K_1,K_2) \) and \( \lambda_1 = \lambda_1(d,\delta,p,q,K_1,K_2,R_0) \) be the constants from \cite{} such that, for \( 0 < \gamma \le \gamma_1 \) and \( \lambda \ge \lambda_1 \), there exists a solution \( u \in \mathcal{H}_{p,q,\omega}^1(\mathcal{Q}) \) satisfying
\[
u_t - \Delta u + \lambda u = D_t^{1/2} h + D_i g_i + f \quad \text{in } \mathcal{Q},
\]
with the conormal derivative boundary condition on \( \mathbb{R} \times \partial \Omega \).
Applying \( D_t^{1/2} \) to the equation and using the a priori estimate from \cite{}, we obtain \( D_t^{1/2} u \in \mathcal{H}_{p,q,\omega}^{1}(\mathcal{Q}) \), which implies \( u \in \mathcal{H}_{p,q,\omega}^{1/2,1}(\mathcal{Q}) \).
Finally, we define the updated constants by \( \gamma_0^{\text{new}} := \gamma_0 \wedge \gamma_1 \) and \( \lambda_0^{\text{new}} := \lambda_0 \vee \lambda_1 \).
}
\end{comment}
We treat the unweighted case and weighted case separately. 

{\it Case 1: the unweighted case.}
The uniqueness follows from the a priori estimate.
We first consider the case where $a_{ij} = \delta_{ij}$ (i.e., the Laplace operator), the spatial domain $\Omega$ has a smooth boundary, and $f$, $g$, and $h$ are smooth, compactly supported functions on $\mathcal{Q}$.
Then the problem can be viewed as a nondivergence form equation with the zero Neumann boundary condition, so that there is a solution $u\in W^{1,2}_p(\cQ) \subset H^{1/2,1}_p(\cQ)$.
For general $f,g,h\in L_p(\cQ)$, we approximate them by smooth functions with compact support. For general $\Omega$, we approximate them from inside by a sequence of smooth domains, and use the weak compactness to obtain a solution. Finally, for general $a_{ij}$, we apply the method of continuity by using the a priori estimate we just proved.

{\it Case 2: the weighted case.}
From the method of continuity, it suffices to consider $\Delta$.
By the reverse H\"older inequality, one can find a sufficiently large $p_1$, depending only on $d$, $p$, $q$, and $K$, such that the small constants $\varepsilon_1,\varepsilon_2\in (0,1)$ defined by 
\[
1-\frac{p}{p_1}=\frac{1}{1+\varepsilon_1}, \quad 1-\frac{q}{p_1}=\frac{1}{1+\varepsilon_2},\] satisfy 
%\[1-\frac{p}{p_1}=\frac{1}{1+\varepsilon_1}, \quad 1-\frac{q}{p_1}=\frac{1}{1+\varepsilon_2},\]}
 $\omega_1^{1+\varepsilon_1}\in A_{p}(\bR^d)$ and $\omega_2^{1+\varepsilon_2}\in A_q(\bR)$. From the doubling property of $A_p$-weights, we have that for any $r\in (0,\infty)$, 
 \begin{equation}
     \label{eq0814_00}
 \int_{B_{2r}} \omega_1^{1+\varepsilon_1}\,dx \le N_1 \int_{B_r} \omega_1^{1+\varepsilon_1}\,dx, \quad
 \int_{-4r^2}^{4r^2} \omega_2^{1+\varepsilon_2}\,dt \le N_1
 \int_{-r^2}^{r^2} \omega_2^{1+\varepsilon_2}\,dt. 
 \end{equation}
 Here, $N_1$ is independent of $r$. Recall that $C_r=(-r^2,r^2)\times \Omega_r$.
 From H\"older's inequality, we see that for $r>0$,
 \begin{equation}
     \label{eq0814_011}
 \|f\|_{L_{p,q,\omega}(C_r)}\le \|\omega_1\|_{L_{1+\varepsilon_1}(B_r)}^{1/p} \|\omega_2\|_{L_{1+\varepsilon_2}(-r^2,r^2)}^{1/p}\cdot\|f\|_{L_{p_1}(C_r)}.
 \end{equation}
 
By mollifying in $t$, we may assume that $D_t^{1/2}h\in L_{p,q,\omega}(\cQ)$. Set $\tilde{f}=f+D_t^{1/2}h\in 
 L_{p,q,\omega}(\cQ)$. 
By the denseness of $C_0^\infty(\cQ)$ in $L_{p,q,\omega}(\cQ)$,  we may further assume that $g_i$ and $\tilde{f}$ are smooth with compact support in $C_R$ for some $R>0$.  Then, from the $L_{p_1}$-solvability which we established, we find $u\in H_{p_1}^{1/2,1}(\cQ)$ such that 
 \begin{equation}
     \label{eq0814_06}
 u_t-\Delta u+\lambda u=D_ig_i+\tilde{f}
 \end{equation}
in $\cQ$ and $u$ satisfies the conormal derivative boundary condition on $\bR\times \partial\Omega$.
Here, $\lambda$ is a sufficiently large constant, which we fix from now on.
By \eqref{eq0814_011}, we have 
\begin{equation}
    \label{eq0814_02}
    \||u|+|Du|\|_{L_{p,q,\omega}(C_{2R})}<\infty.
\end{equation}
For a nonnegative integer $k$, we take a sequence of smooth functions $\eta_k$ such that $\eta_k=0$ in $Q_{2^kR}$ and $1$ outside $Q_{2^{k+1}R}$, and 
\[
|D\eta_k|\le N2^{-k}, \quad |(\eta_k)_t|\le N2^{-2k}.
\]
We observe that $u\eta_k\in H_{p_1}^{1/2,1}(\cQ)$ satisfies 
\[
(u\eta_k)_t-\Delta(u\eta_k)+\lambda u\eta_k
=
u(\eta_k)_t-D_iuD_i\eta_k-D_i(uD_i\eta_k)
\]
in $\cQ$ with the conormal derivative boundary condition on $\bR\times \partial\Omega$. 
Applying the a priori estimate, we obtain that
\[
\| |u\eta_k|+|D(u\eta_k)|\|_{L_{p_1}(\cQ)}
\le N \|u (\eta_k)_t\|_{L_{p_1}(\cQ)}
+N\|u D(\eta_k)\|_{L_{p_1}(\cQ)}
\]
\[+N\|Du D(\eta_k)\|_{L_{p_1}(\cQ)},\]
which yields 
\[
 \||u|+|Du|\|_{L_{p_1}(\cQ\cap (Q_{2^{k+2}R}\setminus Q_{2^{k+1}R}))}\le
\| |u|+|Du|\|_{L_{p_1}(\cQ\setminus Q_{2^{k+1}R})}
\]
\[\le N2^{-k} \||u|+|Du|\|_{L_{p_1}(\cQ\cap (Q_{2^{k+1}R}\setminus Q_{2^kR}))}.
\]
By induction, we reach that for $k=1,2,\ldots$, 
\begin{equation}
    \label{eq0814_03}
     \||u|+|Du|\|_{L_{p_1}(\cQ\cap (Q_{2^{k+1}R}\setminus Q_{2^{k}R}))}\le N 2^{-\frac{k(k-1)}{2}}
\| |u|+|Du|\|_{L_{p_1}(C_{2R})}.
\end{equation}
By H\"older's inequality, \eqref{eq0814_00}, and \eqref{eq0814_03}, it follows that for $k=1,2,\ldots$, 
\[
\||u|+|Du|\|_{L_{p,q,\omega}(\cQ\cap (Q_{2^{k+1}R}\setminus Q_{2^{k}R}))}
\]
\[
\le \|\omega_1\|_{L_{1+\varepsilon_1}(B_{2^{k+1}R})}^{1/p} \|\omega_2\|_{L_{1+\varepsilon_2}(-4^{k+1}R^2,4^{k+1}R^2)}^{1/q}\cdot\||u|+|Du|\|_{L_{p_1}(\cQ\cap (Q_{2^{k+1}R}\setminus Q_{2^{k}R}))}
\]
\[
\le NN_1^{k(\frac{1}{p}+\frac{1}{q})}2^{-\frac{k(k-1)}{2}}
\| |u|+|Du|\|_{L_{p_1}(C_{2R})}.
\]
This estimate together with \eqref{eq0814_02} implies that $u,Du\in L_{p,q,\omega}(\cQ)$. 

We differentiate the equation \eqref{eq0814_06} with respect to $t$. Then, $u_t$ is also a solution to the conormal derivative problem having the right-hand side with $(\tilde{f})_t$ and $(g_i)_t$. By following the above argument, we arrive at $u_t\in L_{p,q,\omega}(\cQ)$. By utilizing interpolation inequalities, we get $D_t^{1/2}u \in L_{p,q,\omega}(\cQ)$, which implies $u\in H_{p,q,\omega}^{1/2,1}(\cQ)$. This concludes the proof.
\end{proof}

%Here is an outline.

%{\bf Unweighted case:} we first prove the a priori estimate by using a partition of unity argument and then the level set argument. The partition of unity is needed not only because of the restriction $R\le R_0$ in Propositions \ref{prop reifen} and \ref{prop reifen laplacian}, but also because of the condition $u\in H^{1/2,1}_\nu\cap H_p^{1/2,1}$.

\subsection{Proofs of Theorem \ref{conormal -infty to T} and Corollary \ref{conormal bounded interval}}

We prove Theorem \ref{conormal -infty to T} by applying Theorem \ref{conormal entire time}, for which we first need to establish the uniqueness of solutions to \eqref{main eq -infty to T} under the given boundary condition.
To this end, we consider the equation \eqref{eq0908_01} below with $\lambda > 0$, posed in $\Omega_T = (-\infty, T) \times \Omega$.

\begin{lemma}
							\label{lem0906_2}
Let $p \in [2,\infty)$, $T \in (-\infty,\infty]$, $\lambda > 0$, and $a_{ij}$ satisfy only the conditions in \eqref{eq0521_02}.
 Then there exists at most one solution $u \in \cH_p^1\left(\Omega_T\right)$ to 
\begin{equation}
							\label{eq0908_01}
u_t - D_i(a_{ij}D_ju) + \lambda u = D_i g_i + f
\end{equation}
in $\Omega_T$ with the conormal derivative boundary condition on $(-\infty,T) \times \partial\Omega$, where $g_i, f \in L_p\left(\Omega_T\right)$.
\end{lemma}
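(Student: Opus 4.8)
By linearity, it suffices to prove that if $u\in\cH_p^1(\Omega_T)$ satisfies $u_t-D_i(a_{ij}D_ju)+\lambda u=0$ in $\Omega_T$ with the conormal derivative boundary condition on $(-\infty,T)\times\partial\Omega$, then $u\equiv0$. The plan is a duality argument: I will show that $\int_{\Omega_T}u\psi\,dX=0$ for every $\psi\in C_0^\infty(\Omega_T)$ by testing the equation for $u$ against the solution of the corresponding backward adjoint conormal problem with right-hand side $\psi$.

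Fix $\psi\in C_0^\infty(\Omega_T)$ with $\operatorname{supp}\psi\subset(-\infty,T_0]\times B_{R_1}$ for some $T_0<T$. Reversing time turns the terminal-value problem
\[
-v_t-D_j(a_{ij}D_iv)+\lambda v=\psi \quad\text{in }\Omega_T,\qquad v|_{t=T}=0,
\]
with the conormal derivative boundary condition on $(-\infty,T)\times\partial\Omega$ into a forward Cauchy problem with zero initial data, whose coefficients $\tilde a_{ji}:=a_{ij}$ still satisfy \eqref{eq0521_02}. Hence, by Theorem \ref{thm1126_1} (together with its variant with a zero initial condition on a half-line, established by the same Lax--Milgram argument), there is a unique $v\in\cH_2^1(\Omega_T)$ solving this problem. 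Since $\psi$ vanishes for $t>T_0$, the $L_2$-uniqueness forces $v\equiv0$ on $(T_0,T)\times\Omega$ (the exponentially growing mode being excluded by $v\in L_2(\Omega_T)$); in particular $v$ vanishes near the terminal end.

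The next step is to show that $v\in\cH_{p'}^1(\Omega_T)$, where $p'=p/(p-1)\in(1,2]$, and that $v$ vanishes at both temporal ends. Testing the adjoint equation with $v$ over time slices and using the conormal condition, ellipticity, and $\lambda>0$ gives $\tfrac{d}{dt}\|v(t)\|_{L_2(\Omega)}^2\ge 2\lambda\|v(t)\|_{L_2(\Omega)}^2$ for $t<T_0$; since $v\in L_2(\Omega_T)$, this yields exponential decay $\|v(t)\|_{L_2(\Omega)}\le Ce^{\lambda t}$ and $\int_{-\infty}^{T_0}\|Dv(t)\|_{L_2(\Omega)}^2\,dt<\infty$ as $t\to-\infty$. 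A weighted energy estimate with an exponential weight in $x$ (and $t$), in which the terms coming from differentiating the weight are absorbed into $\delta|Dv|^2+\lambda|v|^2$ once the weight parameter is small --- after a standard truncation and limiting procedure --- gives exponential decay of $|v|^2+|Dv|^2$ as well. Splitting $\Omega_T$ into a bounded ``core'', on which $v,Dv\in L_2\subset L_{p'}$, and the exponentially decaying tails, on which H\"older's inequality applies, one concludes $v,Dv\in L_{p'}(\Omega_T)$. This is the one place where $p\ge2$ enters: for merely measurable $a_{ij}$ only the energy bound $Dv\in L_2$ is available, so the argument is confined to $p'\le2$.

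Finally, $v$ is an admissible test function in the weak formulation of $u$ --- it lies in $\cH_{p'}^1(\Omega_T)$, vanishes near $t=T$, and decays as $t\to-\infty$ --- and conversely $u$ is admissible in that of $v$. Testing $u$'s equation with $v$ gives
\[
\int_{\Omega_T}\big(u_t\,v+a_{ij}D_juD_iv+\lambda uv\big)\,dX=0,
\]
the first term being the duality pairing of $u_t\in\bH_p^{-1}(\Omega_T)$ with $v$; on the other hand, pairing $u$ with the adjoint equation and integrating by parts in $t$ (the endpoint terms at $t=T$ and $t=-\infty$ vanish because $v\equiv0$ near $T$ and $v$ decays) and in $x$ (the conormal condition on $v$ removes the lateral term) expresses $\int_{\Omega_T}u\psi\,dX$ as the same bilinear expression. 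Hence $\int_{\Omega_T}u\psi\,dX=0$ for all $\psi\in C_0^\infty(\Omega_T)$, and therefore $u\equiv0$. The main obstacle is the decay analysis of $v$ together with the careful verification that $u$ and $v$ are legitimate test functions for each other's equations --- in particular the vanishing of the temporal boundary term as $t\to-\infty$, which is precisely what the exponential decay of $v$ furnishes.
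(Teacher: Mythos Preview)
Your duality approach is genuinely different from the paper's, which proceeds by a direct $L_p$ energy method: mollify the homogeneous solution $v$ in time (with a one-sided kernel) to obtain $v^{(\varepsilon)}\in W_p^{1,1}(\Omega_T)$, observe that $|v^{(\varepsilon)}|^{p-2}v^{(\varepsilon)}\in W_{p'}^{1,1}(\Omega_T)$ (this is exactly where $p\ge2$ is used), apply it as a test function to the mollified equation, and let $\varepsilon\to0$. Ellipticity then forces $\int_{\Omega_T}\lambda|v|^p\le0$, hence $v\equiv0$. No adjoint problem, no decay analysis---two paragraphs.

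Your argument has a real gap at the final step. You assert that $v\in\cH_{p'}^1(\Omega_T)$ is an admissible test function for $u$'s equation, but Definition~\ref{def0814_1} requires test functions in $W_{p'}^{1,1}(\Omega_T)$, i.e.\ with $v_t\in L_{p'}$. Your adjoint solution satisfies only $v_t\in\bH_{p'}^{-1}$, since it solves a divergence-form equation; symmetrically, $u\in\cH_p^1$ is not an admissible test function for $v$'s equation either. To make the pairing $\langle u_t,v\rangle+\langle u,v_t\rangle$ rigorous when both time derivatives are merely in negative-order spaces, on an infinite half-line, you need a Lions-type integration-by-parts lemma or a Steklov-average argument on one of the two functions, together with control of the boundary term as $t\to-\infty$; none of this is supplied. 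The exponential spatial decay of $v$ and $Dv$ via weighted energy is also only sketched---the truncation you allude to is genuinely needed since $ve^{\epsilon|x|}$ is not a priori in $L_2$. These gaps can probably be closed, but the repaired proof would be considerably longer than the paper's direct computation.
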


\begin{proof}
It suffices to prove that $v \equiv 0$, provided that $v \in \cH_p^1\left(\Omega_T\right)$ satisfies
\begin{equation}
							\label{eq0718_02}
v_t - D_i(a_{ij} D_j v) + \lambda v = 0\end{equation}
in $\Omega_T$ with the conormal derivative boundary condition on $(-\infty,T) \times \partial \Omega$.
Set
\begin{equation*}
							%\label{eq0718_01}
v^{(\varepsilon)}(t,x) = \int_{-\infty}^T v(s,x) \eta_\varepsilon(t-s) \, ds,
\end{equation*}
where
\[
\eta_\varepsilon(t) = \varepsilon^{-1}\eta(t/\varepsilon), \quad \eta(t) \in C_0^\infty(\bR), \quad 0 \leq \eta(t) \leq 1, \quad \int_\bR \eta(t) \, dt = 1,
\]
\[
\eta(t) = 0 \quad \text{for}\,\, t \leq 0.
\]
Note that $v^{(\varepsilon)} \in L_p(\Omega_T)$,
\[
\partial_t v^{(\varepsilon)} = \int_{-\infty}^T v(s,x) \varepsilon^{-2}\eta_t\left((t-s)/\varepsilon\right) \, ds \in L_p\left(\Omega_T\right),
\]
and $D v^{(\varepsilon)} = (Dv)^{(\varepsilon)} \in L_p\left(\Omega_T\right)$.
Hence, $v^{(\varepsilon)} \in W_p^{1,1}\left(\Omega_T\right)$.
Moreover, from \eqref{eq0718_02} it follows that
\begin{equation}
							\label{eq0920_01}
v^{(\varepsilon)}_t - D_i((a_{ij}D_jv)^{(\varepsilon)})+ \lambda v^{(\varepsilon)} =0
\end{equation}
in $\Omega_T$ with the conormal derivative boundary condition on $(-\infty,T) \times \partial\Omega$.

We now consider $|v^{(\varepsilon)}|^{p-2}v^{(\varepsilon)}$, which satisfies
\[
|v^{(\varepsilon)}|^{p-2}v^{(\varepsilon)} \in L_q\left(\Omega_T\right),
\]
\[
\partial_t \left(|v^{(\varepsilon)}|^{p-2}v^{(\varepsilon)}\right) = (p-1)|v^{(\varepsilon)}|^{p-2} \partial_t v^{(\varepsilon)} \in L_q\left(\Omega_T\right),
\]
\[
D \left(|v^{(\varepsilon)}|^{p-2}v^{(\varepsilon)}\right) = (p-1)|v^{(\varepsilon)}|^{p-2} D v^{(\varepsilon)} \in L_q\left(\Omega_T\right),
\]
where $1/p+1/q=1$.
Thus, $|v^{(\varepsilon)}|^{p-2}v^{(\varepsilon)} \in W_q^{1,1}\left(\Omega_T\right)$.
Then, by applying $|v^{(\varepsilon)}|^{p-2}v^{(\varepsilon)}$ as (a kind of) test function to \eqref{eq0920_01} with the conormal derivative boundary condition, we get
\[
\frac{1}{p} \int_\Omega |v^{(\varepsilon)}(T,x)|^p \, dx + (p-1)\int_{\Omega_T} \left(a_{ij}D_jv\right)^{(\varepsilon)} |v^{(\varepsilon)}|^{p-2} D_i v^{(\varepsilon)} \, dx \, dt
\]
\[
+ \int_{\Omega_T} \lambda |v^{(\varepsilon)}|^p \, dx \, dt = 0.
\]
In particular,
\[
\int_{\Omega_T} \left( (p-1)\left(a_{ij}D_jv\right)^{(\varepsilon)} |v^{(\varepsilon)}|^{p-2} D_i v^{(\varepsilon)} + \lambda |v^{(\varepsilon)}|^p\right) \, dx \, dt \leq 0,
\]
where we know that $\left(a_{ij}D_jv\right)^{(\varepsilon)} \to a_{ij}D_jv$, $v^{(\varepsilon)} \to v$, $D_iv^{(\varepsilon)} = (D_iv)^{(\varepsilon)} \to D_iv$ in $L_p\left(\Omega_T\right)$ as $\varepsilon \to 0$.
Hence, by letting $\varepsilon \to 0$ in the above inequality and using the ellipticity condition, we get
\[
0 \geq \int_{\Omega_T} \left( (p-1)a_{ij}D_jv |v|^{p-2} D_iv + \lambda |v|^p \right) \, dx \, dt
\]
\[
\geq \int_{\Omega_T} \left( (p-1)\delta|v|^{p-2}|Dv|^2 + \lambda |v|^p \right) \, dx \, dt \geq 0.
\]
This implies that $v \equiv 0$ in $\Omega_T$.
\end{proof}
We now proceed to prove Theorem \ref{conormal -infty to T}.
\begin{proof}[Proof of Theorem \ref{conormal -infty to T}]
We first treat the case $p=q$ with $\omega \equiv 1$, and subsequently address the general space $L_{p,q,\omega}$.

We consider the case $p\in [2,\infty)$. Fix $\lambda > \lambda_0$, where $\lambda_0$ is from Theorem \ref{conormal entire time}. We begin by establishing the a priori estimate \eqref{eq0906_01}.
Let $u\in \cH_{p}^1(\Omega_T)$ be a solution to the equation
\begin{equation}
							\label{eq0908_02}
u_t - D_i(a_{ij}D_ju) + \lambda u = D_i g_i + f
\end{equation}
in $\Omega_T$ with the conormal derivative boundary condition on $(-\infty,T) \times \partial\Omega$. 
Set
\begin{equation}
							\label{eq0813_02}
\bar{g}_i = \left\{
\begin{aligned}
g_i \quad &\text{for} \quad -\infty < t < T,
\\
0 \quad &\text{for} \quad  t \geq T,
\end{aligned}
\right.
\quad
\bar{f} = \left\{
\begin{aligned}
f \quad &\text{for} \quad -\infty < t < T,
\\
0 \quad &\text{for} \quad  t \geq T.
\end{aligned}
\right.
\end{equation}
By Theorem \ref{conormal entire time}, there exists $\bar u \in H_p^{1/2,1}(\cQ)$, where $\cQ = \bR \times \Omega$, satisfying
\[
\bar u_t - D_i({a}_{ij} D_j \bar u) + \lambda \bar u = D_i \bar{g}_i + \bar{f}
\]
in $\cQ$ with the conormal derivative boundary condition on $\bR \times \partial\Omega$.
From the above equation, we deduce that 
$\|\bar u_t\|_{\bH_p^{-1}(\cQ)}$ is bounded, 
and hence $\bar u \in \cH_p^1(\cQ)$. 
This readily implies that $\bar u \in \cH_p^1(\Omega_T)$ as well.
We observe that $\bar u$ is also a solution to equation \eqref{eq0908_02} in $\Omega_T$ satisfying the conormal derivative boundary condition on $(-\infty,T)\times \partial \Omega$. By applying Lemma \ref{lem0906_2} to $\bar u - u$, we see that $u=\bar u$ in $\Omega_T$. Therefore, we have 
\[\|Du\|_{L_{p}(\Omega_T)}+\sqrt{\lambda}\|u\|_{L_{p}(\Omega_T)}
=\|D\bar u\|_{L_{p}(\Omega_T)}+\sqrt{\lambda}\|\bar u\|_{L_{p}(\Omega_T)}\]
\[\le \|D\bar u\|_{L_{p}(\cQ)}+\sqrt{\lambda}\|\bar u\|_{L_{p}(\cQ)}\le 
N \left( \|\bar g_i\|_{L_{p}(\cQ)}+\lambda^{-1/2}\|\bar f\|_{L_{p}(\cQ)}\right)
\]
\[
= N \left( \| g_i\|_{L_{p}(\Omega_T)}+\lambda^{-1/2}\|\bar f\|_{L_{p}(\Omega_T)}\right).
\]
We note that solvability follows immediately, 
since $\bar u$ is exactly the desired solution to equation \eqref{eq0908_02} in $\Omega_T$.

For $p \in (1,2)$, by the duality argument, we see that, as an a priori estimate, \eqref{eq0906_01} holds.
By extending $f$ and $g_i$ of the form \eqref{eq0813_02} and finding a solution in $\cQ$, the solvability is also established.

We now turn to the general space $L_{p,q,\omega}$.
By following the argument of the unweighted case, it is sufficient to show that there exist $\gamma_0$ and $\lambda_0$ such that, under Assumption \ref{assum rei coeffi} ($\gamma_0$) and $\lambda>\lambda_0$, if $u\in \cH_{p,q,\omega}^{1}(\Omega_T)$ satisfies 
\begin{equation}
    \label{eq0910_5}
u_t-D_i(a_{ij}D_ju)+\lambda u=0
\end{equation}
in $\Omega_T$ with the conormal derivative boundary condition on $(-\infty,T)\times \partial \Omega$, then $u\equiv 0$.

We choose $p_0>1$ sufficiently close to $1$ so that the small positive constants $\varepsilon_1$ and $\varepsilon_2$, defined by 
\[
\frac{1}{1+\varepsilon_1}=\frac{p-p_0}{p_0(p-1)}, \quad \frac{1}{1+\varepsilon_2}=\frac{q-p_0}{p_0(q-1)},
\]
satisfy $(\omega_1')^{{1+\varepsilon_1}}\in A_{p'}(\bR^d)$ and $(\omega_2')^{{1+\varepsilon_2}}\in A_{q'}(\bR)$, where $\omega_1'=\omega_1^{\frac{1}{1-p}}$, $\omega_2'=\omega_2^{\frac{1}{1-q}}$, and $p'$, $q'$ denote the conjugate exponents of $p$ and $q$, respectively.  From the doubling property of $A_p$-weights, we have a positive constant $N_1$ such that for any $r\in (0,\infty)$, 
 \begin{equation}
     \label{eq0910_7}
 \int_{B_{2r}} \omega_1^{1+\varepsilon_1}\,dx \le N_1 \int_{B_r} \omega_1^{1+\varepsilon_1}\,dx, \quad
 \int_{-4r^2}^{4r^2} \omega_2^{1+\varepsilon_2}\,dt \le N_1
 \int_{-r^2}^{r^2} \omega_2^{1+\varepsilon_2}\,dt. 
 \end{equation}
 By H\"older's inequality, for any $r>0$, we have 
\begin{equation}
\label{eq0827_1}
\|f\|_{L_{p_0}(Q_r\cap \Omega_T)}\le \|\omega_1'\|_{L_{1+\varepsilon_1}(B_r)}^{\frac{p-1}{p_0}}\|\omega_2'\|_{L_{1+\varepsilon_2}((-r^2,r^2))}^{\frac{q-1}{p_0}}\cdot \|f\|_{L_{p,q,\omega}(Q_r\cap \Omega_T)}.    
\end{equation}
Since $(\omega_1')^{{1+\varepsilon_1}}$ and $(\omega_2')^{{1+\varepsilon_2}}$ are locally integrable, the right-hand side of the above inequality is finite whenever $f\in L_{p,q,\omega}(\Omega_T)$.

For positive integer $n$, let $\zeta_n\in C_0^\infty(\bR^{d+1})$ be a cut-off function defined by 
\[\zeta_n(t,x) =
\begin{cases}
1, & (t,x)\in Q_{n} ,\\
0,  & (t,x)\in \bR^{d+1}\setminus Q_{2n}.
\end{cases}\]
Set $u_n=u\zeta_n$, where $u\in \cH_{p,q,\omega}^1(\Omega_T)$ is a solution to the equation \eqref{eq0910_5} in $\Omega_T$ with the conormal derivative boundary condition on $(-\infty,T)\times \partial \Omega$.
We observe that $u_n\in \cH_{p_0}^1(\Omega_T)$ from \eqref{eq0827_1} and $u_n$ satisfies
\begin{equation*}
%\label{eq0910_1}
(u_n)_t-D_i(a_{ij}D_j u_n)+\lambda u_n=D_ig_i^n+f^n    
\end{equation*}
in $\Omega_T$ with the conormal derivative boundary condition on $(-\infty,T)\times \partial \Omega$. Here, 
\[g_i^n=-a_{ij} uD_j \zeta_n, \quad f^n=-a_{ij}D_j uD_i \zeta_n +u\partial_t\zeta_n.
\]
Define $\overline{g_i^n}$ and $\overline{f^n}$ as in \eqref{eq0813_02}.
From \eqref{eq0827_1}, it follows that $\overline{g_i^n}$ and $\overline{f^n}$ belong to $ L_{p,q,\omega}(\cQ)\cap L_{p_0}(\cQ)$.
We now choose $\gamma_0$ and $\lambda_0$ from Theorem \ref{conormal entire time}, 
and then find $w_n \in H_{p,q,\omega}^{1/2,1}(\cQ)$ satisfying
\begin{equation}
\label{eq0910_2}
(w_n)_t - D_i(a_{ij}D_j w_n) + \lambda w_n = D_i \overline{g_i^n} + \overline{f^n}
\end{equation}
in $\cQ$ with the conormal derivative boundary condition on $\mathbb{R}\times \partial \Omega$.

We claim that $w_n \in \cH_{p_0}^1(\Omega_T)$. To prove the claim, we follow the argument used to verify solvability in weighted spaces in the proof of Theorem \ref{conormal entire time}. Indeed, using \eqref{eq0827_1}, we have 
\begin{equation}
    \label{eq0910_3}
    \||w_n|+|Dw_n|\|_{L_{p_0}(Q_{4n})}<\infty.
\end{equation}
For a nonnegative integer $k$, we take a sequence of smooth functions $\eta_k$ such that $\eta_k=0$ in $Q_{2^kn}$ and $1$ outside $Q_{2^{k+1}n}$, and 
\[
|D\eta_k|\le N2^{-k}, \quad |(\eta_k)_t|\le N2^{-2k}.
\]
We observe that $w_n\eta_k\in H_{p,q,\omega}^{1/2,1}(\cQ)$ satisfies 
\[
(w_n\eta_k)_t-D_i(a_{ij}D_j(w_n\eta_k))+\lambda w_n\eta_k
=
w_n(\eta_k)_t-a_{ij}D_jw_nD_i\eta_k-D_i(a_{ij}w_nD_j\eta_k)
\]
in $\cQ$ with the conormal derivative boundary condition on $\bR\times \partial\Omega$. 
From the a priori estimate for $w_n\eta_k$, we have
\[
\| |w_n\eta_k|+|D(w_n\eta_k)|\|_{L_{p,q,\omega}(\cQ)}
\le N \|w_n (\eta_k)_t\|_{L_{p,q,\omega}(\cQ)}
+N\|w_n D(\eta_k)\|_{L_{p,q,\omega}(\cQ)}
\]
\[+N\|Dw_n D(\eta_k)\|_{L_{p,q,\omega}(\cQ)},\]
which yields 
\[
 \||w_n|+|Dw_n|\|_{L_{p,q,\omega}(\cQ\cap (Q_{2^{k+2}n}\setminus Q_{2^{k+1}n}))}\le
\| |w_n|+|Dw_n|\|_{L_{p,q,\omega}(\cQ\setminus Q_{2^{k+1}n})}
\]
\[\le N2^{-k} \||w_n|+|Dw_n|\|_{L_{p,q,\omega}(\cQ\cap (Q_{2^{k+1}n}\setminus Q_{2^kn}))}.
\]
By induction, we see that for $k=1,2,\ldots$, 
\begin{equation}
    \label{eq0910_4}
     \||w_n|+|Dw_n|\|_{L_{p,q,\omega}(\cQ\cap (Q_{2^{k+1}n}\setminus Q_{2^{k}n}))}\le N 2^{-\frac{k(k-1)}{2}}
\| |u|+|Du|\|_{L_{p,q,\omega}(\cQ\cap Q_{4n})}.
\end{equation}
By H\"older's inequality, \eqref{eq0910_7}, and \eqref{eq0910_4}, it follows that for $k=1,2,\ldots$, 
\[
\||w_n|+|Dw_n|\|_{L_{p_0}(\cQ\cap (Q_{2^{k+1}n}\setminus Q_{2^{k}n}))}
\]
\[
\le \|\omega_1\|_{L_{1+\varepsilon_1}(B_{2^{k+1}n})}^{\frac{p-1}{p_0}} \|\omega_2\|_{L_{1+\varepsilon_2}(-4^{k+1}n^2,4^{k+1}n^2)}^{\frac{q-1}{p_0}}\cdot\||w_n|+|Dw_n|\|_{L_{p,q,\omega}(\cQ\cap (Q_{2^{k+1}n}\setminus Q_{2^{k}n}))}
\]
\[
\le NN_1^{k(\frac{p+q-2}{p_0})}2^{-\frac{k(k-1)}{2}}
\| |w_n|+|Dw_n|\|_{L_{p_1}(Q_{4n})}.
\]
This estimate together with \eqref{eq0910_3} implies that $w_n,Dw_n\in L_{p_0}(\cQ)$. Moreover, from the equation \eqref{eq0910_2}, we see that $\|(w_n)_t\|_{\bH_{p_0}^{-1}(\cQ)}<\infty$. Hence, we obtain that $w_n\in \cH_{p_0}^1(\cQ)$ and this directly implies that $w_n\in \cH_{p_0}^1(\Omega_T)$. 

Since $w_n$ satisfies the same equation as $u_n$ in $\Omega_T$, 
applying the $L_{p_0}(\Omega_T)$-estimate, previously established in the case $p=q$ and $\omega \equiv 1$, to $u_n - w_n$ yields $u_n = w_n$ in $\Omega_T$.
Due to the fact that $\overline{g_i^n}, \overline{f^n}\to 0$ in $ L_{p,q,\omega}(\cQ)$, we obtain $w_n\to 0$ in $H_{p,q,\omega}^{1/2,1}(\cQ)$. Combining these with the observation that $u_n\to u$ in $\cH_{p,q,\omega}^1(\Omega_T)$, we conclude that $u\equiv 0$.
The theorem is proved.
\end{proof}

Finally, we deduce Corollary \ref{conormal bounded interval} from Theorem \ref{conormal -infty to T}.

\begin{proof}[Proof of Corollary \ref{conormal bounded interval}]
We first establish the estimate \eqref{eq0910_9}. We choose $\gamma_0$ and $\lambda_0$ in Theorem \ref{conormal -infty to T} and fix $\lambda>\lambda_0$. 
 Let $u\in \cH_{p,q,\omega,0}^{1}((0,T)\times \Omega)$ be a solution to 
        \begin{equation*}
        %\label{main eq bdd interval}
    u_t-D_i(a_{ij}D_j u)=D_ig_i+f 
    \end{equation*}
    in $(0,T)\times \Omega$ with the conormal derivative boundary condition on $(0,T)\times \partial\Omega$.
    We extend $u$ by zero for $t\le0$, denoting the extension by $\bar u$. This extended function $\bar u\in \cH_{p,q,\omega}^{1}(\Omega_T)$ then solves  
        \begin{equation*}
        %\label{main eq bdd interval}
    \bar u_t-D_i(a_{ij}D_j \bar  u)=D_i\bar g_i+ \bar f
    \end{equation*}
    in $\Omega_T$ with the conormal derivative boundary condition on $(-\infty,T)\times \partial\Omega$, where $\bar g_i=g_iI_{(0,T)}$ and $\bar f=fI_{(0,T)}$. %take molification \eta^\varepsilon w.r.t time variable, then we have \bar u^varepsilon_t-D_i({a_{ij}D_j \bar  u)^varepsilon)=D_i\bar g_i^varepsilon+ \bar f^varepsilon, and then letting \varepsilon \to 0
    Now, set $w=e^{-\lambda t}\bar u$, where $\lambda>\lambda_0$. A direct calculation shows that $w$ satisfies 
    \begin{equation}
        \label{0911_1}
w_t - D_i(a_{ij}D_jw) + \lambda w = D_i(e^{-\lambda t}\bar g_i) + e^{-\lambda t} \bar f
    \end{equation}
in $\Omega_T$ with the conformal derivative condition on $(-\infty,T) \times \partial\Omega$. By applying the estimate \eqref{eq0906_01} to $w$, we obtain \eqref{eq0910_9}.

The solvability follows by reversing the steps: 
first construct $w$ satisfying \eqref{0911_1}, 
then set $\bar u = e^{\lambda t} w$, 
and finally define $u = \bar{u}|_{(0,T) \times \Omega}$. 
It then follows that $u$ solves the desired equation and $u\in \cH_{p,q,\omega}^1((0,T)\times \Omega)$.
Moreover, we claim that $u \in \cH_{p,q,\omega,0}^1((0,T)\times \Omega)$. 
To see this, it suffices to show that $w=0$ on $(-\infty,0)\times \Omega$. 
In fact, $w$ satisfies
\[
w_t - D_i(a_{ij}D_j w) + \lambda w = 0
\]
in $(-\infty,0)\times \Omega$ with the conormal derivative boundary condition on $(-\infty,0)\times \partial\Omega$. 
Applying Theorem \ref{conormal -infty to T}, we deduce that $w=0$ throughout $(-\infty,0)\times \Omega$. 
Consequently, $u$ belongs to $\cH_{p,q,\omega,0}^1((0,T)\times \Omega)$.
 The corollary is proved.
\end{proof}
\begin{comment}[approach using some results for smooth coefficients case]
{\color{cyan}\begin{proof}We perform a standard mollification of \( a_{ij}(t,x) \) and denote the mollified coefficients by \( a_{ij}^\varepsilon \). We first consider the equation
\[
u_t - D_i(a_{ij}^\varepsilon D_j u) = 0 \quad \text{in $\Omega_T$,}
\]
with the conormal derivative boundary condition on $(0,T)\times \partial\Omega$ and $u(0,x)=0$. 
By applying the result from \cite{}, we conclude that \( u \equiv 0 \). Using this uniqueness result and following a standard argument, we establish weighted estimates for equations with leading coefficients \( a_{ij}^\varepsilon \) on \( \Omega_T \).

Now, for the solution \( u \) to the equation
\[
u_t - D_i(a_{ij} D_j u) = 0,
\]
we rewrite it as
\[
u_t - D_i(a_{ij}^\varepsilon D_j u) = D_i\left((a_{ij} - a_{ij}^\varepsilon) D_j u\right).
\]
Then, by using the estimates for the smooth coefficients \( a_{ij}^\varepsilon \), we obtain
\[
\|D u\|_{L_{p,q,w}(\Omega_T)} + \|u\|_{L_{p,q,w}(\Omega_T)} \lesssim \|(a - a^\varepsilon) D u\|_{L_{p,q,w}(\Omega_T)}.
\]
Letting \( \varepsilon \to 0 \)\todo{dominated convergence thm}, we deduce that \( u \equiv 0 \), which implies uniqueness. Finally, by applying the standard argument, we establish the desired estimates.
\end{proof}}   
\end{comment}

\end{document}